\newcommand{\diff}{\operatorname{Diff}}
\newcommand{\Diff}{\operatorname{Diff}}
\renewcommand{\div}{\operatorname{div}}
\newcommand{\PGL}{\operatorname{PGL}}
\newcommand{\Aut}{\operatorname{Aut}}
\newcommand{\PU}{\operatorname{PU}}
\newcommand{\interior}{\operatorname{Int}}
\newcommand{\Sph}{\mathbb{S}}
\newcommand{\R}{\mathbb{R}}
\newcommand{\C}{\mathbb{C}}
\newcommand{\Metrics}{\mathbb{M}}
\newcommand{\N}{\mathbb{N}}
\newcommand{\w}{\omega}
\newcommand{\ep}{\varepsilon}
\newcommand{\al}{\alpha}
\newcommand{\be}{\beta}
\newcommand{\la}{\lambda}
\newcommand{\na}{\nabla}
\newcommand{\SE}{\mathcal{E}}
\newcommand{\SM}{\mathcal{M}}
\newcommand{\SA}{\mathcal{A}}
\newcommand{\SH}{\mathcal{H}}
\newcommand{\SB}{\mathcal{B}}
\newcommand{\SN}{\mathcal{N}}
\newcommand{\ST}{\mathcal{T}}
\newcommand{\SQ}{\mathcal{Q}}
\newcommand{\Chi}{\mathfrak{X}}
\newcommand{\pd}{\partial}
\newcommand{\II}{\operatorname{II}}
\newcommand{\wto}{\rightharpoonup}
\newcommand{\AdmSetConst}{P}
\newcommand{\curvature}{\kappa}
\theoremstyle{plain}
\newtheorem{theorem}{Theorem}[section]
\newtheorem{lemma}[theorem]{Lemma}
\newtheorem{corollary}[theorem]{Corollary}
\newtheorem{proposition}[theorem]{Proposition}
\theoremstyle{definition}
\newtheorem{definition}[theorem]{Definition}
\theoremstyle{remark}
\newtheorem{remark}[theorem]{Remark}
\newtheorem{case[theorem]}{Case}
\title{Minimal Distortion Morphs Generated by Time-Dependent Vector Fields}
\author{Oksana Bihun${}^\ast$, Carmen Chicone${}^\ast$, and Steven G. Harris${}^\dagger$\\ 
\small{${}^\ast$ Department of Mathematics, University of Missouri, Columbia}\\
\small{${}^\dagger$ Department of Mathematics and Computer Science, Saint Louis University}}
\begin{document}

\maketitle

\begin{abstract}
A morph between two Riemannian $n$-manifolds is an isotopy between them together with the set of all intermediate manifolds equipped with Riemannian metrics. 
We propose measures of the distortion produced by some classes of morphs and 
diffeomorphisms between two isotopic Riemannian $n$-manifolds and, with respect
 to these classes, prove the existence of minimal distortion morphs and 
diffeomorphisms.  In particular, we consider the class of time-dependent 
vector fields (on an open subset $\Omega$ of $ \R^{n+1}$ in which the 
manifolds are embedded)  that generate morphs between two manifolds $M$ and 
$N$ via an evolution equation, define the bending and the morphing
distortion energies for these morphs,  and prove the existence of minimizers of the corresponding functionals in the set of time-dependent vector fields that generate morphs between $M$ and $N$ and are $L^2$ functions from
$[0,1]$ to the Sobolev space $W^{k,2}_0(\Omega,\R^{n+1})$.

\end{abstract}

\section{Introduction}
\subsection{Summary of Results}
Let $M$ and $N$ be compact and orientable smooth Riemannian $n$-manifolds isometrically embedded into $\R^{n+1}$.
A morph between $M$ and $N$ is an isotopy between them together with the set of 
all intermediate manifolds equipped with the Riemannian metrics inherited from $\R^{n+1}$. Every morph or diffeomorphism between isotopic manifolds produces distortion via stretching and bending. We define functionals that measure distortion and prove the existence of minimal distortion morphs and diffeomorphisms. 

Let $\Omega \subset \R^{n+1}$ be an open set containing the manifolds $M$ and $N$.
We define functionals $E$ and $\SE$ that measure the  distortion of  diffeomorphisms and 
morphs respectively generated by  time-dependent vector 
fields $v:\Omega\times[0,1]\to \R^{n+1}$ via the evolution equation $dq/dt=v(q,t)$ and  
prove the existence of minimizers of $E$ and $\SE$ in an admissible set $\SA^k_\AdmSetConst$ of 
time-dependent vector fields, 
which is a subset of
the closed ball of radius $\AdmSetConst$ in the Hilbert space ${\SH^k}$ of all $L^2$ functions from $[0,1]$ to the Sobolev space $W^{k,2}_0(\Omega; \R^{n+1})$, where $k \in \N$. We also analyze in detail a  concrete example of a minimal morph for the case of circles embedded in the plane.

We compute the Euler-Lagrange equations for the deformation energy functional 
defined on $\diff(M,N)$ and show that the radial map between
a manifold and its rescaled version is a critical point of the deformation energy functional.
We prove the existence of minimal deformation holomorphic 
diffeomorphisms of Riemann surfaces. 

\subsection{Background and Motivation}

A fundamental problem in Riemannian geometry and related areas is to determine whether 
two diffeomorphic compact Riemannian manifolds $(M,g_M)$  and $(N, g_N)$ are
isometric; that is, if there exists a diffeomorphism $h:M \to N$ such that
$h^\ast g_N-g_M=0$, where $h^\ast g_N$ denotes the pull-back of $g_N$ by $h$.
If no such diffeomorphism 
exists, it is important to know whether there exists a diffeomorphism 
that most closely 
resembles an isometry. This is accomplished by minimization of the \emph{deformation 
energy functional} 
\begin{equation} 
\label{eq:Funct1}
\Phi(h)= \int_M\|h^*g_N-g_M\|^2\, \omega_M, 
\end{equation} 
over the space $\diff(M,N)$ of diffeomorphisms between $M$ and $N$, where $\w_M$
is the volume form on $M$ and $\|\cdot\|$ is the fiber norm on the bundle of all
$(0,2)$-tensor fields on $M$ generated by the fiber metric $g_M^\ast\otimes
g_M^\ast$.

The minimization problem takes on added significance once the  physical interpretation 
of the tensor $ h^*g_N-g_M$ is recognized: it is exactly the (nonlinear) strain tensor 
corresponding to the deformation 
$h$ in case $g_M$ and $g_N$ are Riemannian metrics inherited from Euclidean space. 
Thus, this functional and its variants must occur in physical problems. 
Indeed, the minimal distortion problem arises, for example, in  manufacturing, computer graphics, movie making, and medical imaging. 
 
The problem of bending a sheet of metal to a desired shape using minimal 
energy  has been studied (see~\cite{Yu},~\cite{BGQ}); but,  algorithms 
for numerical approximations are proposed and used without proving the 
existence of minimizers.   

An animation might require an aesthetically pleasing transformation that takes one image to another through intermediate shapes. Such a transformation
is called a morph, or a metamorphosis (see~\cite{W} for a survey on morphing). 
A desirable morph might be defined as a
minimizer of a cost functional that measures the distortion energy. In~\cite{Terz}, 
the distortion energy of an elastically deformable surface $r:U\times[0,T]\to \R^3$, where
$U\subset \R^3$ is an open set, produced by a deformation 
$h=r(\cdot,T)$ is defined as the integral of weighted norms of the local coordinate representations
of the strain tensor $h^\ast g_T-g_0$ and the tensor $h^\ast \II_T-\II_0$, where $g_t$
and $\II_t$ are the first and the second fundamental forms of the surface $r(U,t)$ at the time $t \in
[0,T]$.

Image matching and image registration is an important subject in medical imaging. 
Image matching is used for determining the existence of abnormalities 
(distortions due to underlying medical conditions) in two images, taken at different 
times, of the same organism. The problem of registration of a population of images 
to one template for the purpose of statistical analysis is another instance of image
registration. One approach to the image matching problem is by minimization of a 
distortion functional (see~\cite{Cao, Dupuis, GrenMill, TYMeta}).

In the following sections, we will discuss the  underlying mathematical problem of 
the existence of minimal distortion diffeomorphisms and morphs between embedded manifolds of
codimension one.

\subsection{Mathematical Preliminaries, Definitions, and Results}
\label{subs:MPDR}
Given a smooth oriented $n$-manifold $S$ (perhaps with boundary) isometrically embedded into $\R^{n+1}$, we let $g_{S}$, $\w_{S}$, and $\II_{S}$ denote the Riemannian metric,  volume form, and  second
fundamental form on $S$ associated with this embedding. Also, we let  $\interior{S}$ 
(respectively, $\pd S$) denote the interior (respectively, the boundary) of the manifold $S$.

\begin{definition}
\label{df:morph}
Let $M$ and $N$ be isotopic compact connected smooth 
$n$-manifolds (perhaps with boundary) embedded in
$\R^{n+1}$ such that $M$ is oriented. A $C^\infty$ isotopy
$F:M\times [0,1] \to \R^{n+1}$ together with all the intermediate manifolds 
$M^t:=F(M,t)$, equipped with the orientations induced by the maps  
$f^t=F(\cdot,t):M \to M^t$ and  the Riemannian metrics $g_t$ inherited from $\R^{n+1}$,  
is called a (smooth) \emph{morph} from $M$ to $N$.
\end{definition}

We denote the set of all smooth (respectively, $C^r$) diffeomorphisms between 
manifolds $M$ and $N$ by $\diff(M,N)$ (respectively, $\diff^r(M,N)$). Similarly, 
we denote the set of all smooth morphs between $M$ and $N$ by $\SM(M,N)$. 
If $F$ is an 
isotopy, then each map $F(\cdot,t):M \to M^{t}$ induces
smooth diffeomorphisms $\interior F(\cdot,t):\interior M \to\interior  M^{t}$ and  $\pd F(\cdot,t):\pd M \to\pd  M^{t}$ by
restriction.

In addition, we consider morphs between manifolds $M$ and $N$ with different regularity properties. 
For example, we let  $\SM^{r,\text{ac}}(M,N)$ denote the set of all continuous isotopies $F:M\times[0,1]\to \R^{n+1}$ between  $M$ and $N$ such that for each $p \in M$ the map
$t\mapsto F(p,t)$ is absolutely continuous on $[0,1]$ and for each $t \in [0,1]$ the function 
$p \mapsto F(p,t)$ 
is a $C^r$ diffeomorphism from $M$ onto its image. As in the case of smooth
morphs, the diffeomorphism $F(\cdot,t):M\to M^t$ induces an orientation on the
intermediate manifold $M^t$.

There are several choices for 
cost functionals that measure the distortion of a diffeomorphism 
$h \in \diff(M,N)$ or a morph $F \in \SM(M,N)$. 

A complete theory of the existence of minimizers of cost functionals that 
measure distortion of diffeomorphisms and morphs 
due to change of volume is presented in~\cite{RMJpaper}. 
In this case, the value of the distortion energy functional at a diffeomorphism $h:M \to N$  is defined to be 
the square of the infinitesimal
relative change of volume $|J(h)|-1$ produced by $h$ integrated over 
the manifold $M$, where $J(h)$ is the Jacobian determinant of $h$. This functional
does not take into account the distortion of shape produced by $h$, which is captured
by functionals~\eqref{eq:Funct1} and~\eqref{eq:Funct2},
where the fiber metric $\|\cdot\|$ on the bundle of all tensor fields of type $(0,2)$ is induced by 
the fiber inner product $g_{M}^{\ast}\otimes g_{M}^{\ast}$ (see~\cite{KN}).

The general problem of the existence of minimizers of $\Phi$ is open. The special case where $M$ and 
$N$ are one-dimensional is studied in~\cite{OpusculaPaper} where, among other results, the functional 
$\Phi$ is shown to have no minimizer  in case $M$ and $N$ are circles with the radius of $N$ smaller 
than the radius of $M$. Thus, a solution of the general problem must take into account at least some global properties of the metric structures of the manifolds $M$ and $N$. 
On the other hand, we will prove the existence of  minimizers in case
$M$ and $N$ are Riemann spheres or compact Riemann surfaces of genus greater
than one and the admissible set is  $\operatorname{HD}(M,N)=\{h \in \diff(M,N): h \mbox{ is a holomorphic map}\}$. More precisely, the following theorem will be proved in section~\ref{Ch2:2dimSphere}.
\begin{theorem}
\label{RiemSph}
(i) Let $h_R: \mathbb{R}^3\to  \mathbb{R}^3$ be the radial map given by 
$h_R(p)=R p$ for some number $R>0$. If  $M=\mathbb{S}^2$ is the 
$2$-dimensional unit sphere isometrically embedded into $\R^3$ and  $N=h_R(M)$,  
then  $h\in \operatorname{HD}(M,N)$ is a global minimizer 
of the functional $\Phi$, restricted to the admissible set 
$\operatorname{HD}(M,N)$,  if and only if $h=f \circ h_R|_M$, where $f$ is 
an isometry of $N$.\\
(ii) Let $M$  and $N$ be compact Riemann surfaces. If
 $\operatorname{HD}(M,N)$ is not empty and
the genus of $M$ is at least two, then
there exists a minimizer of the functional $\Phi$ in $\operatorname{HD}(M,N)$. 
\end{theorem}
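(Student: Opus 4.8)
The plan is to use the fact that a holomorphic diffeomorphism of Riemann surfaces is conformal, which forces the strain tensor into a scalar multiple of $g_M$. For $h\in\operatorname{HD}(M,N)$ I would write $h^*g_N=\lambda^2 g_M$ for a smooth positive conformal factor $\lambda\colon M\to(0,\infty)$, so that $h^*g_N-g_M=(\lambda^2-1)g_M$. A pointwise computation with the fiber norm induced by $g_M^\ast\otimes g_M^\ast$ gives $\|g_M\|^2=n=2$, whence
\begin{equation*}
\Phi(h)=\int_M 2(\lambda^2-1)^2\,\omega_M .
\end{equation*}
This reduction is the common starting point for both parts.

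For part (i) the key point is that expanding the square produces two terms that are constant over $\operatorname{HD}(M,N)$. Indeed, since $\lambda^2$ is the Jacobian of the orientation-preserving conformal map $h$, the change-of-variables formula yields $\int_M\lambda^2\,\omega_M=\operatorname{Area}(N)$, while trivially $\int_M\omega_M=\operatorname{Area}(M)$. Hence minimizing $\Phi$ is equivalent to minimizing $\int_M\lambda^4\,\omega_M$ under the fixed constraint $\int_M\lambda^2\,\omega_M=\operatorname{Area}(N)$. I would then invoke Cauchy--Schwarz in the form $\int_M\lambda^4\,\omega_M\ge \operatorname{Area}(N)^2/\operatorname{Area}(M)$, with equality precisely when $\lambda$ is constant, and observe that the radial map $h_R$ (for which $\lambda\equiv R$) attains this bound. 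Note that here the automorphism group $\Aut(\CP^1)=\PSL(2,\C)$ is infinite, so existence of a minimizer genuinely comes from this variational inequality rather than from compactness. For the characterization, a minimizer has constant conformal factor, necessarily $\lambda\equiv R$; then $f:=h\circ(h_R|_M)^{-1}$ is a conformal self-map of $N$ with unit conformal factor, i.e.\ an isometry, giving $h=f\circ h_R|_M$, and conversely every such composite is a minimizer.

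For part (ii) the mechanism is rigidity, not a variational inequality. When $\operatorname{HD}(M,N)$ is nonempty, fixing $h_0\in\operatorname{HD}(M,N)$ identifies it with the coset $\Aut(N)\circ h_0$; since $M$ and $N$ are biholomorphic they share the same genus $g\ge2$, and the automorphism group of a compact Riemann surface of genus at least two is finite by Hurwitz's bound $|\Aut(N)|\le 84(g-1)$. Therefore $\operatorname{HD}(M,N)$ is a finite set on which the real-valued (and everywhere finite) functional $\Phi$ must attain its minimum.

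I expect the only real work to be in part (i): verifying the pointwise identity $\|g_M\|^2=2$ for the chosen fiber norm, justifying the global identity $\int_M\lambda^2\,\omega_M=\operatorname{Area}(N)$, and, most delicately, converting the equality case ``$\lambda$ constant'' into the clean geometric statement $h=f\circ h_R|_M$. The last step requires knowing that the holomorphic self-maps of the round sphere preserving its metric up to scale are exactly the rotations in $SO(3)$, which identifies the family of minimizers with the isometry coset $SO(3)\circ h_R$. Part (ii), by contrast, reduces to the coset structure together with the classical finiteness theorem, so I anticipate it will be brief.
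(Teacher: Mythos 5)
Your proof is correct, and for part~(i) it takes a genuinely different route from the paper's. The paper parametrizes $\operatorname{HD}(\Sph^2,R\,\Sph^2)$ by M\"obius transformations via stereographic projection, uses the invariance $\Phi(f\circ h)=\Phi(h)$ under isometries $f$ of $N$ to descend to the homogeneous space $\PGL(2,\C)/\PU(2,\C)$, picks normal-form representatives $(1,0,qe^{i\psi},r)$ in each coset, evaluates the resulting integral in closed form, and verifies via the Hessian that the reduced function of $(q,r)$ is convex with unique critical point $(0,1)$. You instead exploit conformality directly: $h^\ast g_N=\lambda^2 g_M$ gives $\Phi(h)=2\int_M(\lambda^2-1)^2\,\omega_M$, the cross term $\int_M\lambda^2\,\omega_M=\operatorname{Area}(N)$ is fixed by change of variables (holomorphic diffeomorphisms are orientation preserving), and Cauchy--Schwarz bounds $\int_M\lambda^4\,\omega_M$ below by $\operatorname{Area}(N)^2/\operatorname{Area}(M)$ with equality exactly when $\lambda$ is constant. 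Your argument is shorter, coordinate-free, and isolates the real mechanism (a variational inequality for the conformal factor under an area constraint, which as you note is what substitutes for the missing compactness of $\PSL(2,\C)$); the paper's computation, in exchange, yields the explicit value of $\Phi$ on every coset, which is more information than the location of the minimum. The two agree on the minimum value $8\pi(R^2-1)^2$ (the paper's reduced function $\Psi$ differs from $\Phi$ by a constant factor of $8$, giving its reported minimum $\pi(R^2-1)^2$). Your equality-case bookkeeping is also cleaner than you fear: $\lambda\equiv R$ forces $f:=h\circ(h_R|_M)^{-1}$ to satisfy $f^\ast g_N=g_N$, so $f$ is an isometry with no need for the explicit $SO(3)$ identification you flag as delicate. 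For part~(ii) your argument is the same as the paper's (Hurwitz finiteness of $\Aut(N)$ in genus $\ge 2$, so $\Phi$ attains its infimum on the finite set $\operatorname{HD}(M,N)$); your explicit mention of the coset structure $\Aut(N)\circ h_0$ merely makes precise what the paper leaves implicit.
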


We note that the diffeomorphism $h=f \circ h_R|_M$ in the latter theorem satisfies the Euler-Lagrange 
equations for the functional $\Phi$ with its natural domain $\diff(M,N)$ 
(see appendix~\ref{App:PhiEL}). The Euler-Lagrange equations for the 
deformation energy functional
$\Phi:\diff(M,N)\to \R_+$ are highly nonlinear and rather complicated (see
proposition~\ref{fundlemma1}),
which discouraged us from using them to show the
existence of minimizers of the functional $\Phi$ on $\diff(M,N)$.

If we wish to match, in addition to the Riemannian metrics, the embeddings of the manifolds $M$ and $N$ (to avoid, for example, zero distortion energy maps
between a square and a round cylinder in $\R^3$), we arrive at the problem of minimization of the functional
\begin{equation} 
\label{eq:Funct2} 
\Lambda(h):= \int_M\|h^*g_N-g_M\|^2\, \omega_M+\int_M\|h^*\II_N-\II_M\|^2\, \omega_M
\end{equation}  
over the space of diffeomorphisms between $M$ and $N$, where $\II_M$ and $\II_N$ are the
second fundamental forms on the manifolds $M$ and $N$.

One of the difficulties encountered in attempts to minimize $\Phi$ over 
$\diff(M,N)$ is the lack of a complete understanding of the structure of this 
infinite-dimensional space. The natural new approach is to linearize; that is,
replace $\diff(M,N)$ with a subset of a linear function space. Using this 
approach, which already appears in the literature on 
image deformation (see ~\cite{Cao, Dupuis, GrenMill, TYMeta}), we define our distortion 
energy functionals on time-dependent vector fields that generate morphs 
(see Fig.~\ref{ppict1}).

\begin{figure}
\begin{center} 
\includegraphics[width=3in]{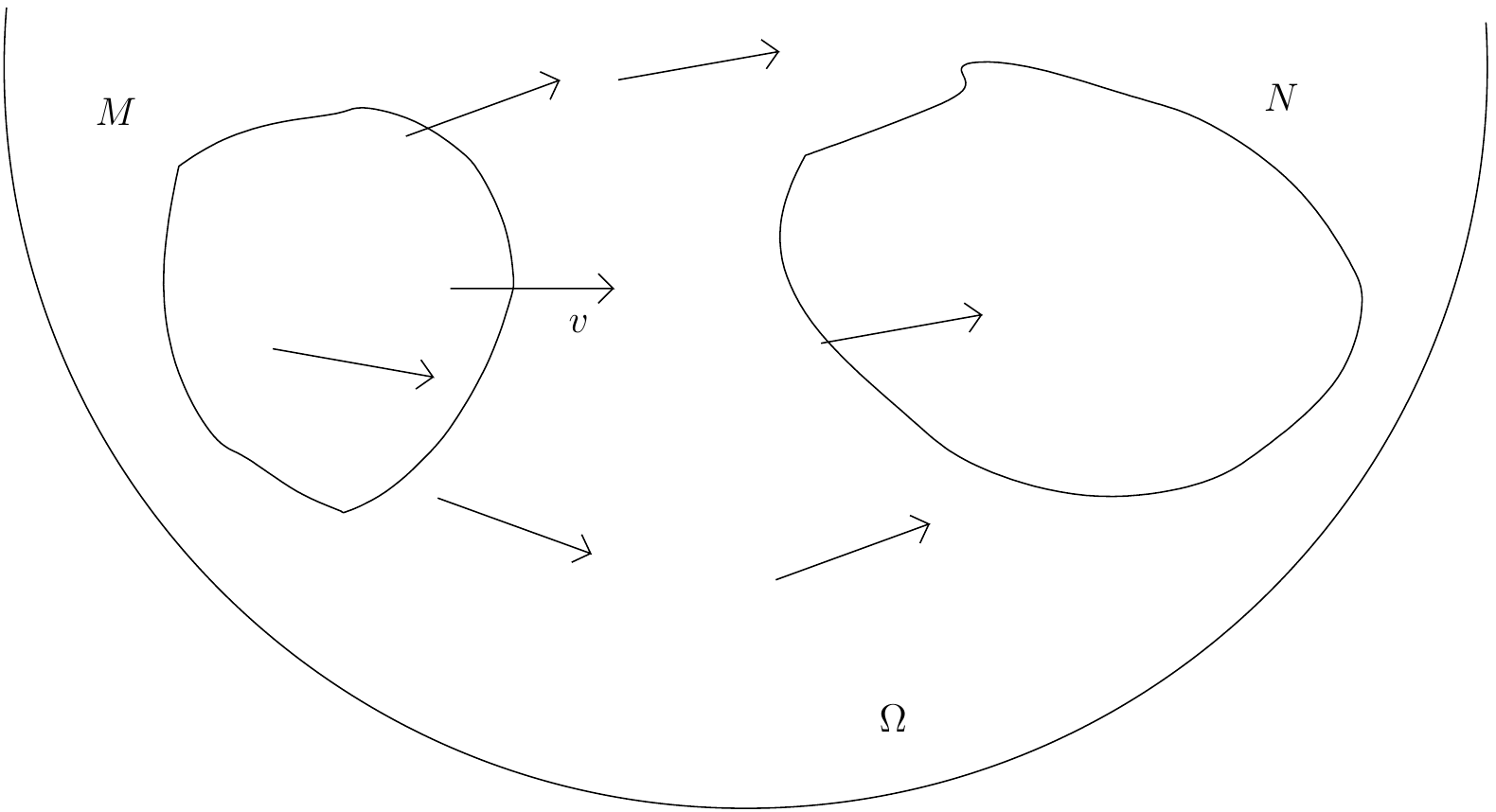}
\caption{The time-dependent vector field $v:\Omega\times[0,1]\to\R^{n+1}$ generates the morph $F^v(p,t)$, which is the solution of
the initial value problem $dq/dt=v(q,t), \; q(0)=p$.}
\label{ppict1}
\end{center}
\end{figure}

Let us denote the Euclidean norm of an element $A \in \R^m$ by $|A|$ or by 
$|A|_{\R^m }$.
Let $\Omega \subset \R^{n+1}$ be an open ball containing the manifolds 
$M$ and $N$, $C_c^\infty(\Omega;\R^{n+1})$ the space of all smooth functions from $\Omega$ to $\R^{n+1}$ with compact
support, and 
$V^k:=W^{k,2}_0(\Omega;\R^{n+1})$ the closure of $C_c^\infty(\Omega;\R^{n+1})$ 
in the Sobolev space $W^{k,2}(\Omega;\R^{n+1})$ (see~\cite{Evans}).

The space $V^k$ is a Hilbert space with the inner product
$$
\langle f,g \rangle_{V^k}=
\sum_{i=1}^{n+1}\sum_{\al, |\al|\leq k}\int_{\Omega}D^\al f_i D^\al g_i dx,
$$
where $f=(f_1,\ldots,f_{n+1}):\Omega\to \R^{n+1}$, $\al=(\al_1,\ldots,\al_{n+1})$ is a multi-index with nonnegative integer components, 
$|\al|=\al_1+\ldots+\al_{n+1}$, and $D^\al f_i={\pd^{|\al|}f_i}\big /{\pd x_1^{\al_1}\ldots\pd x_{n+1}^{\al_{n+1}}}$ is 
the corresponding weak partial derivative of $f_i$.
We choose $k \in \N$ 
large enough so that the Sobolev space $W^{k,2}_0(\Omega)$ is embedded into 
$ C^r(\bar{\Omega})$ and $r  \geq 1$.  By the Sobolev Embedding Theorem (see~\cite{Adams, Evans}), it suffices to choose
$k\geq ({n+1})/{2}+r+1$. 

 Consider time-dependent vector fields 
$v:\Omega\times[0,1]\to\R^{n+1}$ on $\Omega$ that belong to the Hilbert 
space 
\begin{equation}
\label{df:SH}
{\SH^k}=L^2(0,1;V^k)
\end{equation}
(see Fig.~\ref{ppict1}).
By an abuse of notation,  we will sometimes write $v(x,t)=v(t)(x)$ for $v \in {\SH^k}$ and 
$(x,t) \in \Omega \times[0,1]$. A time-dependent vector field $v:\Omega\times[0,1]\to\R^{n+1}$ belongs to the Hilbert space ${\SH^k}$ if its norm $\|v\|_{\SH^k}:=(\int_0^1\|v(\cdot,t)\|_V^2\,dt)^{\frac{1}{2}}$ is finite.
The inner product on ${\SH^k}$ is defined by
$$
\langle v,w \rangle_{\SH^k}=\int_0^1\langle v(\cdot,t),w(\cdot,t) \rangle_{V^k}\,dt.
$$ 

Every vector field 
$v \in {\SH^k}$ generates a morph $F^v:M\times[0,1]\to\R^{n+1}$ from $M$ to 
$F^v(M,1)$ via the evolution equation
\begin{equation}
\label{eq:MorphGen}
 \frac{dq}{dt}=v(q,t).
\end{equation}
More precisely, let $\eta^v(t;t_0,x)$ be the evolution operator of equation~\eqref{eq:MorphGen}; that is,  for every $t_0\in[0 ,1]$ and $x \in \Omega$ the function 
$t \mapsto \eta^v(t;t_0,x)$ solves equation~\eqref{eq:MorphGen} and satisfies the initial condition $\eta^v(t_0;t_0,x)=x$. The morph $F^v$ is defined by 
$F^v(p,t)=\eta^v(t;0,p)$ for all $(p,t)\in M \times [0,1]$. By the properties of the evolution operator $\eta^v$, which have been studied 
in~\cite{Dupuis} and~\cite{TrouveYounes}, the morph $F^v(p,t)$ is of class
$\SM^{r,\text{ac}}(M,F^v(M,1))$ (see
lemmas~\ref{Lemma:exist} and~\ref{lemma:Diff}). The time-one map of 
the evolution operator $\eta^v$ is defined to be $\phi^v(x):=\eta^v(1;0,x)$ for all $x \in \Omega$, and we define $\psi^v=\phi^v|_M$. 

Let $\SA^k_\AdmSetConst$ be the admissible set of all time-dependent vector fields in ${\SH^k}$ that generate morphs between the manifolds
$M$ and $N$ and are bounded 
by a uniform positive constant $\AdmSetConst$. In symbols, 
\begin{equation}
\label{df:SA}
\SA^k_\AdmSetConst=\{v \in {\SH^k}: \psi^v \in\diff^r(M,N) \mbox{ and } \|v\|_{{\SH^k}}\leq \AdmSetConst\}.
\end{equation}
We will prove that for  $\AdmSetConst$ sufficiently large, the admissible set $\SA^k_\AdmSetConst$ is nonempty and $\SA^k_\AdmSetConst$ is weakly closed in 
${\SH^k}$ (see lemma~\ref{lemma:PullBackConvergence}).

Let $\ST^r{}_s(M)$ denote the set of all continuous tensor fields on $M$ 
contravariant of order $r$ and covariant of order $s$ (also called type $(r,s)$).
For  a tensor field $\tau_N \in \ST^0{}_s(N)$ and a diffeomorphism $h:M\to N$, 
$h^\ast \tau_N$ denotes the pull-back of $\tau_N$ to $M$. 

For each $t \in [0,1]$ and $v \in \SA^k_\AdmSetConst$, the manifold $M^{v,t}:=F^v(M,t)$  is  called an intermediate state of the morph $F^v$  between manifolds $M$ and $N$ generated by the time-dependent vector field $v$. We endow this intermediate state with the  Riemannian metric $g^v_t$ inherited from its embedding in $\R^{n+1}$ and let  $\II^v_t$ denote the corresponding second fundamental form.

\begin{definition}
\label{df:ESE}
Let $B_1$ and $B_2$ be nonnegative real numbers, $F^v$ the morph, and $\phi^v$ the time-one map generated by the time-dependent vector field $v \in \SA^k_\AdmSetConst\subset {\SH^k}$ via the evolution equation~\eqref{eq:MorphGen}. Recall that $\psi^v:=\phi^v|_M$.
The \emph{bending distortion energy} of  $v$ is 
\begin{eqnarray*}
 E(v)=E(v;B_1,B_2)&=& B_1 \int_M\|(\psi^v)^\ast g_N-g_M\|^2\w_M\\
&&{}+B_2 \int_M\|(\psi^v)^\ast \II_N-\II_M\|^2\w_M
\end{eqnarray*}
and the \emph{morphing distortion energy} of $v$ is
\begin{eqnarray*}
\SE(v)=\SE(v;B_1,B_2)&=&B_1 \int_0^1\int_M\|F^v(\cdot,t)^\ast g^v_t-g_M\|^2\,\w_M dt\\
&&{}+B_2 \int_0^1\int_M\|F^v(\cdot,t)^\ast \II^v_t-\II_M\|^2\,\w_M dt,
\end{eqnarray*}
where  $\|\cdot\|$ is the fiber norm on the tensor bundle $\ST^0{}_2(M)$ generated by the fiber inner product
$g_M^\ast \otimes g_M^\ast$. (Note: We will use the same notation for the fiber norm on the tensor bundle $\ST^0{}_s(M)$
generated by the inner product $\otimes_{i=1}^{s} g_M^\ast$.)
\end{definition}

We will prove that the functionals $E$ and $\SE$ have minimizers in $\SA^k_P$.

\begin{theorem}\label{MainTheorem}
(i) If $\AdmSetConst>0$ and $k \in \N$ are sufficiently large, then 
each of the functionals
  $E:\SA^k_\AdmSetConst \to\R_+$ and $\SE:\SA^k_\AdmSetConst \to\R_+$
has a minimizer in the admissible set
$\SA^k_\AdmSetConst$.
\end{theorem}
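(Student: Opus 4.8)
The plan is to establish existence by the direct method of the calculus of variations. Since both $E$ and $\SE$ take values in $\R_+$, the infimum of each over $\SA^k_\AdmSetConst$ is a finite nonnegative number; denote it by $m$. I would fix a minimizing sequence $\{v_j\}\subset\SA^k_\AdmSetConst$ with $E(v_j)\to m$ (and argue identically for $\SE$). Because $\SA^k_\AdmSetConst$ is contained in the closed ball of radius $\AdmSetConst$ in the Hilbert space $\SH^k$, and bounded sequences in a Hilbert space admit weakly convergent subsequences, I would pass to a subsequence (not relabeled) with $v_j\wto v^\ast$ weakly in $\SH^k$.

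The first substantive step is to return the weak limit to the admissible set. Here I would invoke lemma~\ref{lemma:PullBackConvergence}, which asserts that $\SA^k_\AdmSetConst$ is weakly closed in $\SH^k$; hence $v^\ast\in\SA^k_\AdmSetConst$, and in particular $\psi^{v^\ast}\in\diff^r(M,N)$. The same lemma supplies convergence of the geometric data generated by the flow: the time-one maps $\psi^{v_j}$ and the full morphs $F^{v_j}(\cdot,t)$ converge, and so do the pulled-back metrics $(\psi^{v_j})^\ast g_N$ and $F^{v_j}(\cdot,t)^\ast g^{v_j}_t$ together with the pulled-back second fundamental forms $(\psi^{v_j})^\ast\II_N$ and $F^{v_j}(\cdot,t)^\ast\II^{v_j}_t$, all uniformly on $M$ (and, for $\SE$, uniformly in $t\in[0,1]$) to the corresponding objects built from $v^\ast$.

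The second step is lower semicontinuity---in fact continuity---of the functionals along the sequence. The integrands are continuous (indeed polynomial) functions of the components of the pulled-back tensors relative to the fixed fiber inner product $g_M^\ast\otimes g_M^\ast$, integrated against the fixed volume form $\w_M$ over the compact manifold $M$. Using the uniform convergence of the pullback tensors from the previous step, together with a uniform bound furnished by $\|v_j\|_{\SH^k}\le\AdmSetConst$, the dominated convergence theorem yields $E(v_j)\to E(v^\ast)$; for $\SE$ one additionally applies dominated convergence in the time variable $t\in[0,1]$. Consequently $E(v^\ast)=\lim_j E(v_j)=m$, so $v^\ast$ minimizes $E$ on $\SA^k_\AdmSetConst$, and the identical argument produces a minimizer of $\SE$.

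The crux of the argument, and the step I expect to be the main obstacle, is precisely the pullback convergence contained in lemma~\ref{lemma:PullBackConvergence}: weak convergence in $\SH^k=L^2(0,1;V^k)$ is far weaker than what would naively be needed to control the nonlinear flow map $v\mapsto F^v$ and its spatial derivatives, which enter the second fundamental forms $\II^v_t$ at second order. The mechanism that makes it work is the choice of $k$ large enough that the embedding $V^k=W^{k,2}_0(\Omega;\R^{n+1})\hookrightarrow C^r(\bar\Omega)$ holds with $r$ at least $2$, so that the second fundamental forms are controlled; this embedding is compact, so weak convergence in $V^k$ upgrades to strong convergence in a slightly weaker $C^{r'}$ norm. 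Because the evolution operator $\eta^v$ is obtained by integration in time, the weakness of the convergence in the $t$-variable is absorbed by a Gronwall-type estimate on the flow and its first and second spatial derivatives, yielding the uniform convergence of $F^{v_j}$, $DF^{v_j}$, and $D^2F^{v_j}$ needed above. The required properties of $\eta^v$ have been developed in~\cite{Dupuis} and~\cite{TrouveYounes}, and I would organize them so that this single lemma carries the analytic weight of the theorem.
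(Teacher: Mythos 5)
Your proposal follows the same route as the paper: the direct method, with weak compactness of the bounded set $\SA^k_\AdmSetConst$, sequential weak closedness from lemma~\ref{lemma:PullBackConvergence}, and weak continuity of $E$ and $\SE$ (which the paper isolates as lemma~\ref{l123}, resting on the Trouv\'e--Younes convergence estimates for the flow and its spatial derivatives) combined with dominated convergence. You have also correctly located where the analytic weight sits, namely in upgrading weak convergence in $\SH^k$ to convergence of the flows $F^{v_j}$ together with their first and second spatial derivatives via the choice of $k$.
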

The detailed conditions on the constants $\AdmSetConst$
 and $k$ are formulated in theorem~\ref{th123}.

We note that each diffeomorphism $\psi^v:M \to N$ generated by a time-dependent vector field $v \in \SA^k_P$ is isotopic, as a map
from $M$ to $\R^{n+1}$, to the inclusion map $i:M \to \R^{n+1}$ via the isotopy 
$F^v\in \SM^{r,\text{ac}}(M,N)$. To minimize the distortion energy of
diffeomorphisms from other isotopy classes, we replace the map $\psi^v$ in the definition of the functional $E$ by the diffeomorphism
$\psi^v\circ \phi:M\to N$, where $\phi$ is a fixed diffeomorphism on $M$. 
The existence of minimizers of the functional $E$ with the above adjustment guarantees the existence of minimizers of the 
functionals $\Phi$ and $\Lambda$ defined in displays~\eqref{eq:Funct1} and~\eqref{eq:Funct2} in a restricted
admissible set of all $C^2$ diffeomorphisms between the manifolds $M$ and $N$, which, considered as maps from $M$ to $\R^{n+1}$, are isotopic to
a given map $\phi:M \to \R^{n+1}$.

\begin{theorem}
\label{MainThm2}
If $P>0$ and $k \in \N$ are sufficiently large, then for every $\phi \in \diff(M)$ both functionals
$\Phi$ and $\Lambda$ defined in displays~\eqref{eq:Funct1} and~\eqref{eq:Funct2} respectively have minimizers in the admissible set
$$\SB^k_{P,\,\phi}:=\{h \in \diff^2(M,N): h=\psi^v\circ \phi \text{ for some } v\in \SA^k_P\}.$$
\end{theorem}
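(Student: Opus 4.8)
The plan is to reduce Theorem~\ref{MainThm2} to the existence result already obtained for the bending distortion energy $E$ of Definition~\ref{df:ESE}, the point being that precomposition with the fixed smooth diffeomorphism $\phi$ is a harmless modification. For $v \in \SA^k_\AdmSetConst$ define the composed functional
\[
E_\phi(v)=B_1\int_M\|(\psi^v\circ\phi)^\ast g_N-g_M\|^2\,\w_M+B_2\int_M\|(\psi^v\circ\phi)^\ast \II_N-\II_M\|^2\,\w_M,
\]
which is exactly $E$ with $\psi^v$ replaced by $\psi^v\circ\phi$. Taking $(B_1,B_2)=(1,0)$ gives $E_\phi(v)=\Phi(\psi^v\circ\phi)$ and taking $(B_1,B_2)=(1,1)$ gives $E_\phi(v)=\Lambda(\psi^v\circ\phi)$, where $\Phi$ and $\Lambda$ are the functionals in~\eqref{eq:Funct1} and~\eqref{eq:Funct2}. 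Since each $v\in\SA^k_\AdmSetConst$ has $\psi^v\in\diff^r(M,N)$ with $r\geq 2$ and $\phi$ is smooth, the composite $\psi^v\circ\phi$ lies in $\diff^2(M,N)$; hence $\SB^k_{\AdmSetConst,\phi}=\{\psi^v\circ\phi:v\in\SA^k_\AdmSetConst\}$ is the image of $\SA^k_\AdmSetConst$ under $v\mapsto\psi^v\circ\phi$, and minimizing $\Phi$ (respectively $\Lambda$) over $\SB^k_{\AdmSetConst,\phi}$ is equivalent to minimizing $E_\phi$ over $\SA^k_\AdmSetConst$. In particular $\SB^k_{\AdmSetConst,\phi}$ is nonempty because $\SA^k_\AdmSetConst$ is.

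Next I would run the direct method of the calculus of variations on $E_\phi$, mirroring the proof of Theorem~\ref{MainTheorem}. By lemma~\ref{lemma:PullBackConvergence}, together with the choice of $\AdmSetConst$ and $k$ from theorem~\ref{th123}, the admissible set $\SA^k_\AdmSetConst$ is bounded and weakly closed in the Hilbert space $\SH^k$, hence weakly sequentially compact. It therefore suffices to show that $E_\phi$ is weakly lower semicontinuous. The analytic heart of this, already established for $E$ itself, is that the map $v\mapsto\psi^v$ is continuous from $(\SA^k_\AdmSetConst,\text{weak})$ into $C^r(M,\R^{n+1})$ with $r\geq 2$; that is, $v_j\wto v$ in $\SH^k$ forces $\psi^{v_j}\to\psi^v$ together with all derivatives up to order $r$, uniformly on the compact manifold $M$.

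The only new ingredient is to verify that this convergence survives precomposition with $\phi$. Because $\phi$ is a fixed smooth map, the chain rule expresses the first- and second-order derivatives of $\psi^{v}\circ\phi$ in terms of those of $\psi^{v}$ evaluated at $\phi$, multiplied by the fixed bounded factors $D\phi$ and $D^2\phi$; hence $\psi^{v_j}\circ\phi\to\psi^v\circ\phi$ in $C^r(M,\R^{n+1})$ as well. The pull-back tensors $(\psi^v\circ\phi)^\ast g_N$ and $(\psi^v\circ\phi)^\ast\II_N$ depend continuously on the first and second derivatives of $\psi^v\circ\phi$, so their integrands converge uniformly on $M$, and passing to the limit under the integral sign yields $E_\phi(v_j)\to E_\phi(v)$. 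Thus $E_\phi$ is weakly continuous, and in particular weakly lower semicontinuous, so it attains its infimum at some $v_\ast\in\SA^k_\AdmSetConst$. Setting $h_\ast=\psi^{v_\ast}\circ\phi$ then produces a minimizer of $\Phi$ (respectively $\Lambda$) in $\SB^k_{\AdmSetConst,\phi}$.

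I expect the main obstacle to be bookkeeping rather than conceptual: all the analytic weight sits in the weak-to-$C^r$ continuity of $v\mapsto\psi^v$ up to second derivatives, which is exactly what makes the bending ($B_2$) term of $E$ tractable in Theorem~\ref{MainTheorem}. Once that is granted, precomposition by the fixed $\phi$ contributes only bounded, $v$-independent factors and cannot disturb any limit, so Theorem~\ref{MainThm2} is essentially a corollary of the machinery behind Theorem~\ref{MainTheorem}. The one point requiring care is confirming that $h_\ast=\psi^{v_\ast}\circ\phi$ genuinely lands in $\diff^2(M,N)$, which holds because $\phi$ is a smooth diffeomorphism of $M$ and $\psi^{v_\ast}$ is a $C^r$ diffeomorphism with $r\geq 2$.
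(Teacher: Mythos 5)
Your proposal is correct and follows essentially the same route as the paper: the paper also reduces Theorem~\ref{MainThm2} to minimizing $E_\phi$ over $\SA^k_\AdmSetConst$ via the direct method of theorem~\ref{th123}, with the only new ingredient being that precomposition with the fixed diffeomorphism $\phi$ preserves the pointwise convergence and uniform boundedness of the pulled-back tensor fields (the paper states this at the tensor level, you state it via the chain rule, but these are the same observation). One minor imprecision: the pull-backs $(\psi^v\circ\phi)^\ast g_N$ and $(\psi^v\circ\phi)^\ast\II_N$ involve only first derivatives of $\psi^v\circ\phi$ since $g_N$ and $\II_N$ are fixed tensor fields on $N$, so your appeal to second-derivative convergence is unnecessary (though harmless under the hypothesis that $k$ is sufficiently large).
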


In section~\ref{Sec:Example} we construct an example of a minimal distortion
diffeomorphism and morph
between the unit circle $\Sph^1$ and the circle $\Sph^1_R$, with radius $R>1$,
in
$\R^2$.

While the construction of a minimizer of the functional $E$
does not cause significant difficulties, 
finding a minimizer of the functional $\SE$ is a much more intricate process.
Even after we restrict our attention to the family of 
morphs whose intermediate states are concentric 
circles, finding a minimal distortion morph requires delicate analysis, which is done in section~\ref{Sec:Example}.

To find a morph $H(p,t)=\psi(t)p$ with $\psi \in Q_+:=
\{\phi \in C^2(0,1)\cap C[0,1]: \phi(0)=1,\, \phi(1)=R, 
\mbox{ and } \phi \mbox{ is increasing}\}$,  which has minimal
distortion among the morphs $F \in \SM^{3,\text{ac}}(M,N)$ whose intermediate states
are circles with increasing radii, we solve the optimization problem 
\begin{equation}
\label{Sec1OptPr1}
\begin{array}{l}
\mbox{minimize } J(\psi)=\int_0^1 (\psi^2-1)^2\,dt+\int_0^1 (\psi-1)^2\,dt\\
\mbox{for } \psi \in Q_+\\
\mbox{subject to }
\int_0^1\Big(\frac{\psi'}{\psi}\Big)^2\,dt\leq A,
\end{array}
\end{equation}
where $A>0$. The inequality constraint in optimization problem~\eqref{Sec1OptPr1}
is derived from the requirement that the vector fields on the set $\Omega\subset
\R^2$ generated by the morph $H$ must be bounded by a uniform constant.

More precisely, let $\Omega$ be the open ball in $\R^2$ of radius $R+2$ and let
$\rho:\R^2\to\R^2$ be a bump function such that $\rho \equiv 1$ on the open ball $B(0,R+1)$,
$\rho\equiv 0$ on $\Omega^c$, and $0\leq\rho\leq 1$.

Given $P>0$, define
$$
A(P):=\|\rho \operatorname{id}_{\Omega}\|^{-2}_{W_0^{5,2}(\Omega;\R^2)} P^2.
$$

\begin{theorem}
If the constant $A=A(P)>\log^2 R$,
then there exists a unique minimal distortion morph $H(p,t)=\psi(t) p$, where $\psi \in Q_+$, between the unit circle
$\Sph^1$ and the circle of radius $R>1$ in $\R^2$, 
among all the morphs $F \in \SM^{3,\,\operatorname{ac}}(M,N)$ of the form
$$
F(p,t)=\phi(t) p,  \;\; \phi \in Q_+
$$
that generate the time-dependent vector field
$$
v(x,t)=\frac{\phi'(t)}{\phi(t)}\rho(x) x, \;\; (x,t)\in\Omega\times[0,1]
$$
such that $\|v\|_{\SH^5}\leq P$.

Moreover, the radial function 
 $\psi \in Q_+$ of the distortion minimal morph $H$ is the unique solution of the 
optimization problem~\eqref{Sec1OptPr1} and 
solves the initial value problem
\begin{equation}
\label{eq:psiIVP}
\left\{
\begin{array}{ll}
\psi'=\frac{1}{\sqrt{\la}}\psi \sqrt{\mu+(\psi^2-1)^2+(\psi-1)^2},\\
\psi(0)=1,
\end{array}
\right.
\end{equation}
where the pair of positive constants $\la$ and $\mu$ is the unique solution 
 of the system of equations
\begin{equation}
\label{eqHyp:CondLaE1}
\int_1^R \frac{ds}{s \sqrt{\mu+(s^2-1)^2+(s-1)^2}}=\frac{1}{\sqrt{\la}}
\end{equation}
and
\begin{equation}
\label{eqHyp:CondLaE2}
\frac{1}{\sqrt{\la}}\int_1^R \frac{\sqrt{\mu+(s^2-1)^2+(s-1)^2}}{s}\,ds=A.
\end{equation}
\end{theorem}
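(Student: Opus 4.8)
The plan is first to reduce the stated morph problem to the scalar optimization problem~\eqref{Sec1OptPr1}, and then to solve the latter. For a radial morph $F(p,t)=\phi(t)p$ the intermediate state $M^{v,t}$ is the circle of radius $\phi(t)$; parametrizing $\Sph^1$ by the angle $\theta$ (so that $g_M=d\theta^2$, $\w_M=d\theta$, and $\|d\theta\otimes d\theta\|=1$ in the fiber norm induced by $g_M^\ast\otimes g_M^\ast$), a direct computation gives $F(\cdot,t)^\ast g^v_t-g_M=(\phi^2-1)\,d\theta^2$ and $F(\cdot,t)^\ast\II^v_t-\II_M=\pm(\phi-1)\,d\theta^2$. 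Integrating over $\Sph^1$ yields $\SE(v)=2\pi\int_0^1\big(B_1(\phi^2-1)^2+B_2(\phi-1)^2\big)\,dt$, which for $B_1=B_2=1$ equals $2\pi J(\phi)$. Since the generated field is $v(x,t)=(\phi'/\phi)\rho(x)x$, we have $\|v\|_{\SH^5}^2=\|\rho\operatorname{id}_\Omega\|_{W_0^{5,2}}^2\int_0^1(\phi'/\phi)^2\,dt$, so $\|v\|_{\SH^5}\le P$ is exactly $\int_0^1(\phi'/\phi)^2\,dt\le A(P)=A$. Thus minimizing distortion over the admissible radial morphs is identical to solving~\eqref{Sec1OptPr1}.

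The key structural observation for~\eqref{Sec1OptPr1} is that the substitution $\sigma=\log\psi$ renders it convex. Writing $W(s)=(s^2-1)^2+(s-1)^2$, one checks that $\sigma\mapsto W(e^\sigma)$ has nonnegative second derivative for $\sigma\ge 0$; hence $\int_0^1 W(e^\sigma)\,dt$ is convex on the feasible set, where $\psi\ge 1$ is forced by $\psi(0)=1$ together with monotonicity, and the constraint $\int_0^1(\sigma')^2\,dt\le A$ is convex as well. I would first show the constraint is active at a minimizer: the unconstrained Euler--Lagrange equation of $\int_0^1 W(\psi)\,dt$ is $W'(\psi)=0$, whose only root in $[1,R]$ is $\psi=1$, incompatible with $\psi(1)=R$, so no interior critical curve exists. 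Introducing a multiplier $\la>0$ for the active constraint and the $t$-independent Lagrangian integrand $W(\psi)+\la(\psi'/\psi)^2$, the Beltrami first integral $W(\psi)-\la(\psi'/\psi)^2=-\mu$ gives $\psi'=\la^{-1/2}\psi\sqrt{\mu+W(\psi)}$, which is precisely the initial value problem~\eqref{eq:psiIVP}.

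Separating variables and substituting $s=\psi$ converts the endpoint condition $\psi(1)=R$ into~\eqref{eqHyp:CondLaE1} and the active constraint into~\eqref{eqHyp:CondLaE2}. Eliminating $\la$ through $1/\sqrt\la=G(\mu):=\int_1^R ds/(s\sqrt{W(s)+\mu})$ reduces the pair to the single equation $\Psi(\mu):=G(\mu)K(\mu)=A$, where $K(\mu):=\int_1^R\sqrt{W(s)+\mu}\,ds/s$. The hard part will be proving this equation has a unique positive solution, and this is where the hypothesis $A>\log^2 R$ enters. Writing $d\nu=ds/s$ on $[1,R]$, Cauchy--Schwarz applied to $(W+\mu)^{-1/2}=(W+\mu)^{-3/4}(W+\mu)^{1/4}$ gives $G(\mu)^2\le K(\mu)\int_1^R(W+\mu)^{-3/2}\,d\nu$, strictly since $W$ is nonconstant; this inequality is exactly $(\log\Psi)'(\mu)<0$, so $\Psi$ is strictly decreasing. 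Because $W(s)\to 0$ as $s\to 1$ makes the singularity in $G$ nonintegrable, $G(\mu)\to\infty$ and $\Psi(\mu)\to\infty$ as $\mu\to 0^+$, while $\Psi(\mu)\to(\int_1^R d\nu)^2=\log^2 R$ as $\mu\to\infty$; hence $\Psi$ maps $(0,\infty)$ bijectively onto $(\log^2 R,\infty)$ and $\Psi(\mu)=A$ has a unique root exactly when $A>\log^2 R$, with $\la=G(\mu)^{-2}$ then determined.

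It remains to verify minimality and uniqueness. Since $\mu>0$, the right-hand side of~\eqref{eq:psiIVP} is smooth and bounded away from zero on $\psi\ge 1$, so its solution $\psi^\ast$ is smooth and strictly increasing; by~\eqref{eqHyp:CondLaE1} it satisfies $\psi^\ast(1)=R$, whence $\psi^\ast\in Q_+$, and by~\eqref{eqHyp:CondLaE2} it saturates the constraint. For global optimality I would use convex duality: setting $\mathcal{L}_\la(\psi)=J(\psi)+\la\int_0^1(\psi'/\psi)^2\,dt$, any feasible $\psi$ obeys $J(\psi)\ge\mathcal{L}_\la(\psi)-\la A$ because $\la>0$ and $\int_0^1(\psi'/\psi)^2\,dt\le A$; convexity of $\mathcal{L}_\la$ in $\sigma=\log\psi$ together with the fact that $\psi^\ast$ satisfies the Euler--Lagrange equation of $\mathcal{L}_\la$ with the prescribed endpoints gives $\mathcal{L}_\la(\psi)\ge\mathcal{L}_\la(\psi^\ast)$, and since $\psi^\ast$ saturates the constraint, $\mathcal{L}_\la(\psi^\ast)-\la A=J(\psi^\ast)$; hence $J(\psi)\ge J(\psi^\ast)$. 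Strict convexity of $\la\int_0^1(\sigma')^2\,dt$ forces $\psi^\ast$ to be the unique minimizer, so $H(p,t)=\psi^\ast(t)p$ is the unique minimal distortion morph of the prescribed form, completing the proof.
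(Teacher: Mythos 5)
Your proposal is correct and reaches the same ODE, the same pair of integral relations for $(\la,\mu)$, and the same limits $f(0^+)=+\infty$, $f(\infty)=\log^2 R$, but it gets there by a genuinely different route in the two technically substantive steps. Where the paper first proves existence of a minimizer of the relaxed problem~\eqref{OptPr2} by the direct method in $W^{1,4/3}(0,1)$ with cut-off arguments (lemma~\ref{lm:psiPr2Ex1R}), then invokes the generalized Kuhn--Tucker theorem to produce $\la>0$ with an active constraint and Euler--Lagrange regularity theory to get $C^2$ smoothness (lemma~\ref{lm:PsiNecCond}), you instead construct the candidate directly from the Beltrami first integral and certify it by convexity: the substitution $\sigma=\log\psi$ makes the Lagrangian $W(e^\sigma)+\la(\sigma')^2$ jointly strictly convex for $\sigma\ge 0$ (a correct computation: $\frac{d^2}{d\sigma^2}W(e^\sigma)=8e^{2\sigma}(2e^{2\sigma}-1)+2e^\sigma(2e^\sigma-1)>0$ there), so the Lagrangian sufficiency inequality $J(\psi)\ge \mathcal{L}_\la(\psi)-\la A\ge \mathcal{L}_\la(\psi^\ast)-\la A=J(\psi^\ast)$ gives global minimality and strict convexity gives uniqueness, with no appeal to the direct method, Kuhn--Tucker, or regularity theorems. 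Note that this convexity holds only for $\sigma\ge0$, which is automatic on $Q_+$ (the class in the theorem) but would require the paper's cut-off argument to cover the relaxed class $Q^{1,2}$; your argument proves exactly the stated theorem, while the paper's machinery also handles the relaxed problem and would survive loss of convexity. For the strict monotonicity of $f(\mu)=G(\mu)K(\mu)$ you replace the paper's symmetrization of a double integral by the Cauchy--Schwarz inequality $G(\mu)^2< K(\mu)\int_1^R(W+\mu)^{-3/2}\,ds/s$ (strict since $W$ is nonconstant), which is precisely $(\log f)'(\mu)<0$; this is a cleaner derivation of the same fact. The reduction of the morphing energy and the norm constraint to $J$ and $G$ is identical to the paper's.
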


\section{Bending and Morphing via Time-Dependent\\ Vector Fields in $\R^{n+1}$}
\label{Label123}
In this section we prove theorem~\ref{MainTheorem}.

We will show that the admissible set $\SA^k_\AdmSetConst$ is nonempty if $\AdmSetConst$ is sufficiently large.

\begin{lemma}
Let $M$ and $N$ be manifolds as in definition~\ref{df:morph}. 
Let $F$  be a smooth morph between the manifolds $M$ and $N$ and assume that $\Omega \subset \R^{n+1}$
is an open ball in $\R^{n+1}$ containing the image $F(M\times[0,1])$ of the morph $F$.
There exists $\AdmSetConst_{0}>0$ such that the admissible set $\SA^k_\AdmSetConst$ is nonempty 
whenever $\AdmSetConst\geq\AdmSetConst_{0}$ and $k\geq\frac{n+1}{2}+2$.
\end{lemma}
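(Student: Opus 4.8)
The plan is to exhibit a single smooth, compactly supported time-dependent vector field $w \in \SH^k$ whose time-one map restricts on $M$ to the given diffeomorphism $F(\cdot,1)\colon M \to N$; its finite $\SH^k$-norm will then furnish the threshold $\AdmSetConst_0$. First I would promote the morph to an ambient motion. Since $M$ is compact and $F\colon M\times[0,1]\to\R^{n+1}$ is a smooth isotopy whose image lies in the open ball $\Omega$, the isotopy extension theorem yields a smooth family $\tilde F\colon \R^{n+1}\times[0,1]\to\R^{n+1}$ with each $\tilde F(\cdot,t)$ a diffeomorphism, $\tilde F(\cdot,0)=\operatorname{id}$, $\tilde F|_{M\times[0,1]}=F$, and $\tilde F(\cdot,t)$ supported in a fixed compact set. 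Because this support may be taken inside any neighborhood of the compact track $\bigcup_t F(M,t)\subset\Omega$, I can arrange it to lie in $\Omega$; if the extension produced by the theorem is not already so supported, I multiply its generating field by a bump function equal to $1$ on a neighborhood of the track and vanishing near $\pd\Omega$, which leaves the flow along $M$ unchanged since trajectories issuing from $M$ are exactly the curves $t\mapsto F(p,t)$ and hence never leave the region where the bump function is $1$.

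Next I would read off the generating field $w(x,t):=\pd_t\tilde F\big(\tilde F(\cdot,t)^{-1}(x),\,t\big)$. By construction the evolution operator of $dq/dt=w(q,t)$ is $\eta^w(t;0,x)=\tilde F(x,t)$, so the time-one map is $\phi^w=\tilde F(\cdot,1)$ and $\psi^w=\phi^w|_M=F(\cdot,1)$, a smooth diffeomorphism of $M$ onto $N$; in particular $\psi^w\in\diff^r(M,N)$. Since $\tilde F$ is smooth in $(x,t)$ and compactly supported in $x$ uniformly for $t\in[0,1]$, the field $w$ is smooth on $\Omega\times[0,1]$ with support in a fixed compact subset of $\Omega$. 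Hence $w(\cdot,t)\in C_c^\infty(\Omega;\R^{n+1})\subset V^k$ for every $t$, and $t\mapsto w(\cdot,t)$ is continuous (indeed smooth) into the Hilbert space $V^k$. A continuous $V^k$-valued function on $[0,1]$ is square-integrable, so $w\in L^2(0,1;V^k)=\SH^k$ with finite norm. Taking $\AdmSetConst_0$ to be any positive number with $\AdmSetConst_0\geq\|w\|_{\SH^k}$, we obtain $w\in\SA^k_\AdmSetConst$ whenever $\AdmSetConst\geq\AdmSetConst_0$, which proves that $\SA^k_\AdmSetConst$ is nonempty.

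The step I expect to require the most care is the compact-support control in the ambient extension: one must keep $\tilde F(\cdot,t)$ (and therefore $w$) supported strictly inside the ball $\Omega$ while preserving $\tilde F|_{M\times[0,1]}=F$, and the bump-function argument above is what makes this compatible with the restriction to $M$. The hypothesis $k\geq\frac{n+1}{2}+2$ is not needed for this existence construction as such; it serves to place us in the regularity regime of the paper, guaranteeing $V^k\hookrightarrow C^1$ so that the evolution-operator machinery of lemmas~\ref{Lemma:exist} and~\ref{lemma:Diff} applies and $\psi^v$ is genuinely a $C^r$ diffeomorphism with $r\geq1$. Since the field $w$ we construct is smooth, the conclusion $\psi^w\in\diff^r(M,N)$ is immediate and no further estimate is required.
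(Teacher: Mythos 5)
Your proof is correct, and it reaches the same endpoint as the paper --- a smooth, compactly supported time-dependent vector field $w\in\SH^k$ that agrees with the velocity of the morph along its track, so that $\psi^w=F(\cdot,1)\in\diff^r(M,N)$ and $P_0:=\|w\|_{\SH^k}$ serves as the threshold --- but via a different extension device. The paper works directly with the velocity field $v(y,t)=\pd_t F\big([F(\cdot,t)]^{-1}(y),t\big)$ defined on the track $Q=\{(F(x,t),t):(x,t)\in M\times[0,1]\}$: it shows $Q$ is a compact submanifold of $\R^{n+1}\times\R$ and that $v$ is smooth on it, extends $v$ to all of $\R^{n+1}\times\R$ by the smooth Tietze extension theorem, and only then cuts off with a bump function $\rho$ equal to $1$ on $Q$ and vanishing near $\pd\Omega$. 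You instead invoke the isotopy extension theorem to produce an ambient isotopy $\tilde F$ with controlled support and differentiate it to obtain $w$. The two routes are morally equivalent --- the isotopy extension theorem is standardly proved by exactly the paper's velocity-field construction --- so your argument is a legitimate shortcut that outsources the technical core to a named theorem, while the paper's version is more self-contained and makes explicit why the resulting field lies in $C_c^\infty(\Omega;\R^{n+1})$ for each $t$. Your bump-function remark, that cutting off the generating field away from a neighborhood of the track does not alter the flow on $M$ because trajectories issuing from $M$ stay on the track where the cutoff equals $1$, is precisely the uniqueness argument the paper relies on implicitly when it sets $w(x,t)=\rho(x)\bar v(x,t)$; and your observation that the hypothesis $k\ge\frac{n+1}{2}+2$ only serves to guarantee $V^k\hookrightarrow C^r$ with $r\ge 1$ matches the paper's standing regularity assumption.
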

\begin{proof}
The morph $F\in \SM(M,N)$ defines the $\R^{n+1}$ valued function 
$$
v(y,t)=\frac{\pd F}{\pd t}([F(\cdot,t)]^{-1}(y),t)
$$
on the compact subset $Q=\{(F(x,t),t):(x,t)\in M\times[0,1]\}$ of $\R^{n+1}\times\R$. 

We will extend the function $v$ to a smooth vector field $w\in \SH^k$ such that $\psi^w(M)=N$.

First, notice that the smooth map $G:M\times[0,1]\to\R^{n+1}\times\R$ defined by
$G(x,t)=(F(x,t),t)$ is a proper map ($M \times[0,1]$ is compact) and an injective immersion, 
hence an embedding (see~\cite{AMR}). Therefore, 
$Q=G(M\times[0,1])$ is a submanifold (with boundary) of $\R^{n+1}\times\R$ (see~\cite{Hirsch}).

Next, notice that the map $G_1:Q\to M\times[0,1]$ defined by $G_1(y,t)=([F(\cdot,t)]^{-1}(y),t)$
is the inverse of $G$. Because $G$ is an immersion, hence a local diffeomorphism, the map $G_1$ is smooth. Therefore, the map
$v:Q\to \R^{n+1}$ is smooth because it is the composition of two smooth maps $G_1:Q\to M\times[0,1]$ and $\frac{\pd F}{\pd t}:M\times[0,1]\to \R^{n+1}$.

The smooth function $v:Q\to \R^{n+1}$ can be extended locally. That is,
 for every $(y,t)\in Q$ there exists an open set
$U\subset \R^{n+1}\times\R$ such that $(y,t)\in U$ and a smooth function $v_1:U\to \R^{n+1}$
such that $v_1|_{U\cap Q}=v|_{U\cap Q}$. This local extension property 
follows from a more general fact about smooth functions defined on submanifolds:
Let $S$ be an $s$-dimensional smooth submanifold (perhaps with boundary) of $\R^{m}$ and let $f:S\to\R$ be a smooth function. Then for every $x\in S$ there exists an open set
$W\subset \R^m$ containing $x$ and a smooth function $f_1:W\to\R$ such that $f|_{W\cap S}=f_1|_{W\cap S}$.
It is easy to construct 
a local extension of the function $f$ using submanifold charts on $S$ and
the definition of a smooth function whose domain is a submanifold with boundary. The details
are left to the reader.

 Therefore, the function $v:Q\to \R^{n+1}$ satisfies the conditions of 
the smooth Tietze extension theorem (see \cite{AMR})
and can be extended from the closed set $Q\subset \R^{n+1}\times \R$ by
a smooth map $\bar{v}:\R^{n+1}\times \R\to \R^{n+1}$.

Finally, define $w(x,t)=\rho(x)\bar{v}(x,t)$, where $\rho:\R^{n+1}\to \R$ is a smooth bump function such that $\rho\equiv 1$ on $Q$ and $\rho\equiv 0$ on $\pd \Omega$, 
and set
$P_0:=\|w\|_{\SH^k}$.
\end{proof}

From now on, we assume that the open set $\Omega$ in $\R^{n+1}$ is chosen as in the latter lemma and the constant $P>0$ is
large enough so that the set $\SA^k_P$ is not empty; the number $k$ of weak derivatives satisfies the inequality $k\geq (n+1)/2+r+1$, where $r\geq 1$.

For each $v \in \SA^k_\AdmSetConst$,  the time-one map $\psi^{v}:M \to N$ transforms  the interior (respectively, the boundary)
of the manifold $M$ to the interior (respectively, the boundary)  of the manifold $N$.
The existence and the convergence properties of the evolution operators generated by vector fields $v \in \SH^k$ via the evolution equation~\eqref{eq:MorphGen}
have been studied in~\cite{Dupuis, TrouveYounes}. For convenience of the reader, we state some of these properties 
(which will be useful in our proofs) in Appendix~\ref{Append:A}.

As mentioned in section~\ref{subs:MPDR}, every time-dependent vector 
field $v \in \SA^k_\AdmSetConst \subset \SH^k$ generates a morph $F^v$
between the manifolds $M$ and $N$ of class $\SM^{r,\text{ac}}(M,N)$. 

Let us recall the distortion energy functionals $E:\SH^k \to \R_+$ and $\SE:\SH^k\to \R_+$ (see definition~\ref{df:ESE}). 

One of the main ingredients in the proof of theorem~\ref{MainTheorem} is the 
weak continuity of the functionals $E$ and $\SE$. We will prove the weak continuity
of more general auxiliary functionals, where the tensor fields
$\tau_M$ and $\tau_N$ in the following definition will later be replaced by the
first and the second fundamental forms on the manifolds $M$ and $N$ respectively.

\begin{definition}
\label{df:I12}
Let $M$ and $N$ be manifolds as in definition~\ref{df:morph}.
For given continuous tensor fields  $\tau_M$ and $\tau_N$ of type $(0,s)$ on $M$ and $N$ respectively, the functional
$J:\SA^k_\AdmSetConst \to \R_{+}$ is defined by
$$
J(v)=\int_M\|(\psi^v)^\ast \tau_N-\tau_M\|^2\, \w_M.
$$
Let  $v \in \SA^k_\AdmSetConst$ be  a time-dependent vector field  that generates a morph 
$F^v\in\SM^{r,\text{ac}}(M,N)$ from the manifold $M$ to $N$.
Recall that the intermediate state at the time $t\in[0,1]$ of the morph $F^v$ is denoted by $M^{v,t}$. The Riemannian metric and the second fundamental form
on $M^{v,t}$, which are associated with the embedding of $M^{v,t}$ into $\R^{n+1}$,  are denoted by $g^v_t$ and $\II^v_t$ respectively.
The functionals $I_{1}:\SA^k_\AdmSetConst \to \R_{+}$ and $I_{2}:\SA^k_\AdmSetConst \to \R_{+}$ are given by
\begin{equation}
\label{df:I1}
I_1(v)=\int_0^1 \int_M\|F^v(\cdot,t)^\ast g_t^v-g_M\|^2 \,\w_M\,dt
\end{equation}
and
\begin{equation}
\label{df:I2}
I_2(v)=\int_0^1 \int_M\|F^v(\cdot,t)^\ast \II_t^v-\II_M\|^2 \,\w_M\,dt.
\end{equation}
\end{definition}

\begin{definition}
\label{df:MultilinearNorm}
Let $X$ be an $n$-dimensional vector space equipped with the inner product $g_{X}$. Let
$T^{0}{}_{s}(X)=\otimes_{i=1}^{s} X^{\ast}$. For every $v \in X$, we denote the norm of $v$ with respect to the inner
product $g_{X}$ by $|v|_{g_{X}}=g_{X}(v,v)^{1/2}$ and the unit sphere by $S_{g_{X}}=\{v \in X:|v|_{g_{X}}=1\}$.
Define the norm on $T^{0}{}_{s}(X)$ by
$$
\|b\|_{g_{X}}=\max_{v_{i}\in S_{g_{X}}}|b(v_{1},\ldots,v_{s})|.
$$
Another norm on $T^0{}_s(X)$ is defined by (see \cite{KN})
$$
\|b\|=\otimes_{i=1}^s g_X^\ast(b).
$$

Let $\Metrics (X)\subset T^{0}{}_{2}(X)$ denote the metric space of all inner
products on $X$ with the metric $d(g,g')=\|g-g'\|_{g_{X}}$. 
\end{definition}

Note that if $\{e_1,\ldots,e_n\}$ is an orthonormal 
basis of $(X,g_X)$, then 
\begin{equation}
\label{eq:gXastnorm}
\|b\|=\sum_{i_1,\ldots,i_s=1}^n
b(e_{i_1},\ldots,e_{i_s})^2.
\end{equation}

\begin{lemma}
\label{lemma:NormsCont}
The function $\eta:T^{0}{}_{s}(X)\times \Metrics (X)\to \R$ defined by $\eta(\beta,g)=\|\beta\|_{g}$ is continuous on its
domain.
\end{lemma}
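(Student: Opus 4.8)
The plan is to prove sequential continuity: given $(\beta_k, g_k) \to (\beta, g)$ in $T^{0}{}_{s}(X) \times \Metrics(X)$, I would show $\|\beta_k\|_{g_k} \to \|\beta\|_g$. Since both factors are metric spaces and $T^{0}{}_{s}(X)$ is finite dimensional, sequential continuity suffices, and I may use any fixed norm on $T^{0}{}_{s}(X)$ because all norms there are equivalent. The only structural facts I need are that the evaluation map $(\beta, v_1, \ldots, v_s) \mapsto \beta(v_1, \ldots, v_s)$ is jointly continuous (being multilinear on a finite-dimensional space, hence polynomial in coordinates) and that the maxima defining $\|\cdot\|_{g}$ are attained, which holds because each $S_g$ is compact and evaluation is continuous.

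First I would establish a uniform coercivity estimate. Because the limit $g$ is positive definite and $g_k \to g$ in the norm $\|\cdot\|_{g_X}$, the smallest $g_X$-eigenvalue of $g_k$ converges to that of $g$, so there exist $c > 0$ and an index $K$ with $g_k(v,v) \ge c\,|v|^2_{g_X}$ for all $v \in X$ and all $k \ge K$. Consequently every sphere $S_{g_k}$ with $k \ge K$ is contained in the fixed $g_X$-ball $\{\,|v|_{g_X} \le c^{-1/2}\,\}$. This uniform bound supplies the compactness used below, and it is the step where positive definiteness of $g$ and the choice of topology on $\Metrics(X)$ are genuinely used; I expect this to be the main, though mild, obstacle.

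Next I would prove the two semicontinuity inequalities separately. For lower semicontinuity, pick $v_1^*, \ldots, v_s^* \in S_g$ with $\|\beta\|_g = |\beta(v_1^*, \ldots, v_s^*)|$ and renormalize $v_i^{(k)} := v_i^* / |v_i^*|_{g_k} \in S_{g_k}$. Since $g_k \to g$, one has $|v_i^*|^2_{g_k} = g_k(v_i^*, v_i^*) \to g(v_i^*, v_i^*) = 1$, so $v_i^{(k)} \to v_i^*$, and joint continuity of evaluation gives $\|\beta_k\|_{g_k} \ge |\beta_k(v_1^{(k)}, \ldots, v_s^{(k)})| \to |\beta(v_1^*, \ldots, v_s^*)| = \|\beta\|_g$, whence $\liminf_k \|\beta_k\|_{g_k} \ge \|\beta\|_g$. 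For upper semicontinuity, choose maximizers $v_1^{(k)}, \ldots, v_s^{(k)} \in S_{g_k}$ realizing $\|\beta_k\|_{g_k}$; by the coercivity estimate these lie in a fixed compact set, so along any subsequence realizing $\limsup_k \|\beta_k\|_{g_k}$ I can extract a further subsequence with $v_i^{(k)} \to v_i^*$. Passing to the limit in $g_k(v_i^{(k)}, v_i^{(k)}) = 1$ shows $v_i^* \in S_g$, and joint continuity of evaluation gives $\|\beta_k\|_{g_k} = |\beta_k(v_1^{(k)}, \ldots, v_s^{(k)})| \to |\beta(v_1^*, \ldots, v_s^*)| \le \|\beta\|_g$ along that subsequence; hence $\limsup_k \|\beta_k\|_{g_k} \le \|\beta\|_g$. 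Combining the two inequalities yields $\lim_k \|\beta_k\|_{g_k} = \|\beta\|_g$, which is the desired continuity.

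An alternative I would keep in reserve removes the moving domain altogether: represent each $g$ by the $g_X$-self-adjoint positive operator $A_g$ with $g(u,v) = g_X(A_g u, v)$, so that $v \mapsto A_g^{1/2} v$ carries $S_g$ onto the fixed sphere $S_{g_X}$ and $\|\beta\|_g = \max_{w_i \in S_{g_X}} |\beta(A_g^{-1/2} w_1, \ldots, A_g^{-1/2} w_s)|$. Continuity of $g \mapsto A_g^{-1/2}$, the matrix inverse square root being continuous on positive-definite operators, then reduces the statement to the elementary fact that the maximum of a jointly continuous function over a \emph{fixed} compact set depends continuously on the parameters. This route trades the compactness extraction for the continuity of the inverse square root; I prefer the semicontinuity argument above because it is more self-contained.
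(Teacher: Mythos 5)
Your proof is correct. It takes a somewhat different route from the paper's: the paper proves continuity separately in each variable --- in $\beta$ by equivalence of norms on the finite-dimensional space $T^0{}_s(X)$, and in $g$ by an explicit $\ep$--$\delta$ estimate in which maximizers $v_i\in S_g$ are renormalized to $u_i=v_i/|v_i|_{g'}\in S_{g'}$ and the difference $\beta(v_1,\dots,v_s)-\beta(u_1,\dots,u_s)$ is controlled by a telescoping multilinearity expansion. You instead prove joint sequential continuity directly, splitting it into lower semicontinuity (via the same renormalization trick, so this half overlaps with the paper) and upper semicontinuity (via a uniform coercivity bound $g_k(v,v)\ge c\,|v|_{g_X}^2$ that traps all the spheres $S_{g_k}$ in one compact set, followed by subsequence extraction). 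What your approach buys is that it genuinely establishes \emph{joint} continuity of $\eta$, which is what the lemma asserts and what is used downstream (e.g.\ in lemma~\ref{lemma:MUltNormBded} and the boundedness estimates of lemma~\ref{l123}); separate continuity in each variable does not formally imply joint continuity, although the paper's estimates are in fact uniform enough to be upgraded. What the paper's argument buys is an explicit modulus of continuity in $g$ with quantitative constants $C(g)$, $K(g)$, at the cost of leaving the second inequality to the reader. Your reserve argument via $A_g^{-1/2}$ is also sound and is perhaps the cleanest conceptual reduction, since it converts the moving constraint set $S_g$ into a fixed one.
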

The proof of the latter lemma is sketched in Appendix~\ref{Append:B} for
completeness.

For a $C^{1}$ Riemannian manifold $(S,g_{S})$, let $\|\cdot\|$ be the fiber norm on the bundle of all continuous tensor
fields on $S$ of type $(0,s)$ generated by the fiber inner product 
$\otimes_{i=1}^{s}g_{S}^{\ast}$. 

\begin{lemma}
\label{lemma:MUltNormBded}
If $b$ is a continuous tensor field of type $(0,s)$ on a $C^1$  Riemannian $n$-manifold $(S, g_S)$,  then the function
\begin{equation}
\label{df:TensorNormCont}
 z \mapsto \|b(z)\|_{g_{S}(z)}
\end{equation}
is continuous on $S$.
Moreover, the norms $\|\cdot\|$ and $\|\cdot\|_{g_{S(z)}}$ are uniformly equivalent; in fact,
\begin{equation}
\label{eq:NormsEquiv}
\frac{1}{n^{s/2}}\|b(z)\|\leq\|b(z)\|_{g_{S}(z)}\leq \|b(z)\| 
\end{equation}
for all $b \in \ST^{0}{}_{s}(S)$ and  $z \in S$.
\end{lemma}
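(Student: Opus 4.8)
The plan is to treat the two assertions separately: the pointwise norm equivalence~\eqref{eq:NormsEquiv} is elementary multilinear algebra carried out in a single fiber, whereas the continuity of $z\mapsto\|b(z)\|_{g_S(z)}$ will be reduced, through local coordinate charts, to the continuity of the map $\eta$ already established in Lemma~\ref{lemma:NormsCont}. No global orthonormal frame is needed, which is fortunate since one need not exist.

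First I would establish~\eqref{eq:NormsEquiv} at a fixed point $z\in S$. Choose a $g_S(z)$-orthonormal basis $\{e_1,\dots,e_n\}$ of $T_zS$, so that by~\eqref{eq:gXastnorm} one has $\|b(z)\|^2=\sum_{i_1,\dots,i_s}b(z)(e_{i_1},\dots,e_{i_s})^2$. For the upper bound $\|b(z)\|_{g_S(z)}\le\|b(z)\|$, expand arbitrary unit vectors $v_j=\sum_i c_{ji}e_i$ (so $\sum_i c_{ji}^2=1$) in the multilinear expression for $b(z)(v_1,\dots,v_s)$ and apply the Cauchy–Schwarz inequality; the coefficient sum factors as $\prod_j\big(\sum_i c_{ji}^2\big)=1$, giving $|b(z)(v_1,\dots,v_s)|\le\|b(z)\|$ and hence the claim after taking the maximum over unit vectors. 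For the lower bound, each basis evaluation satisfies $|b(z)(e_{i_1},\dots,e_{i_s})|\le\|b(z)\|_{g_S(z)}$ because the $e_{i_j}$ lie on $S_{g_S(z)}$; summing the $n^s$ squared components yields $\|b(z)\|^2\le n^s\|b(z)\|_{g_S(z)}^2$, that is, $n^{-s/2}\|b(z)\|\le\|b(z)\|_{g_S(z)}$. Since the constants $1$ and $n^{s/2}$ do not depend on $z$, the equivalence is uniform.

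For continuity I would argue locally. On a coordinate chart $(U,\varphi)$ the frame $\{\pd/\pd x^i\}$ identifies each $T_zS$ with $\R^n$ by a linear isomorphism $\Phi_z$; under $\Phi_z$ the tensor $b(z)$ and the inner product $g_S(z)$ have coordinate representations $\tilde b(z)\in\ST^0{}_s(\R^n)$ and $\tilde g(z)\in\Metrics(\R^n)$, whose entries are the usual components $b(z)(\pd/\pd x^{i_1},\dots,\pd/\pd x^{i_s})$ and $g_S(z)(\pd/\pd x^i,\pd/\pd x^j)$. These depend continuously on $z$, since $b$ is a continuous tensor field and $g_S$ is a continuous metric, so $z\mapsto(\tilde b(z),\tilde g(z))$ is continuous into $\ST^0{}_s(\R^n)\times\Metrics(\R^n)$. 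Because $\Phi_z$ carries $S_{g_S(z)}$ onto $S_{\tilde g(z)}$ and preserves tensor values, the norm is unchanged: $\|b(z)\|_{g_S(z)}=\|\tilde b(z)\|_{\tilde g(z)}=\eta(\tilde b(z),\tilde g(z))$. Composing the continuous map $z\mapsto(\tilde b(z),\tilde g(z))$ with the continuous function $\eta$ of Lemma~\ref{lemma:NormsCont} shows that $z\mapsto\|b(z)\|_{g_S(z)}$ is continuous on $U$, and since continuity is local it holds on all of $S$.

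The only delicate point is the invariance of the intrinsically defined norm $\|\cdot\|_g$ under the chart identification $\Phi_z$: this is precisely what legitimately replaces the varying fiber $(T_zS,g_S(z))$ by the fixed pair $(\R^n,\tilde g(z))$ and permits the application of Lemma~\ref{lemma:NormsCont}. Verifying that $\Phi_z$ maps the $g_S(z)$-unit sphere onto the $\tilde g(z)$-unit sphere and is value-preserving is the crux; everything else is routine.
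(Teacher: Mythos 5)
Your proposal is correct and follows essentially the same route as the paper, which simply asserts that the continuity follows from lemma~\ref{lemma:NormsCont} and that the inequalities~\eqref{eq:NormsEquiv} are easily derived from the definitions of the two norms together with formula~\eqref{eq:gXastnorm}. You have merely supplied the details the paper leaves to the reader: the Cauchy--Schwarz and componentwise estimates in a $g_S(z)$-orthonormal basis for the norm equivalence, and the chart-level identification of fibers (with its norm-invariance check) needed to invoke the continuity of $\eta$.
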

\begin{proof}The continuity of the function defined in display~\eqref{df:TensorNormCont} follows immediately from lemma~\ref{lemma:NormsCont}; and, the
inequalities in display~\eqref{eq:NormsEquiv} can be easily derived from the definitions of the norms $\|\cdot\|$ and
$\|\cdot\|_{g_{S}(z)}$ and formula~\eqref{eq:gXastnorm}.
\end{proof}

Recall that a sequence $\{v^{l}\}_{l=1}^{\infty}\subset \SH^k$ converges weakly to $v \in \SH^k$ (in symbols, $v^{l}\wto v$) as $l \to \infty$ if 
$\langle v^{l}-v,w\rangle \to 0$ as $l\to \infty$ for every $w \in \SH^k$. We call $v$ the weak limit of
$\{v^{l}\}_{l=1}^{\infty}$.
A set $\SQ \subset \SH^k$ is
sequentially weakly closed if it contains the weak limit of every weakly convergent sequence
$\{v^{l}\}_{l=1}^{\infty}\subset \SQ$.

\begin{lemma}
\label{lemma:PullBackConvergence}
(i) The admissible set $\SA^k_\AdmSetConst$ is  sequentially weakly closed in $\SH^k$.\\
(ii) Let $b$ be a continuous tensor field of type $(0,s)$ on the manifold $N$; and, 
 for every $w \in \SA^k_\AdmSetConst$, let $\psi^w$ denote the restriction to $M$ of the time-one map of the evolution equation ${dq}/{dt}=w(q,t)$. 
If a sequence $\{v^l\}_{l=1}^\infty \subset \SA^k_\AdmSetConst$ converges weakly to $v \in \SA^k_\AdmSetConst$ in $\SH^k$,  then 
$$
\lim_{l\to\infty}\|(\psi^{v^l})^\ast b- (\psi^v)^\ast b\|(p_0)=0
$$
for every $p_0 \in M$.
\end{lemma}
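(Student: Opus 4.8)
The plan is to reduce everything to the convergence properties of the evolution operator recorded in Appendix~\ref{Append:A} (following~\cite{Dupuis,TrouveYounes}). The one fact I will lean on is this: if $\{v^l\}\subset\SH^k$ satisfies $\sup_l\|v^l\|_{\SH^k}\le\AdmSetConst$ and $v^l\wto v$, then $v\in\SH^k$, the time-one maps $\phi^{v^l}$ are $C^r$ diffeomorphisms of $\Omega$, and they converge to the $C^r$ diffeomorphism $\phi^v$ in $C^1(\bar\Omega;\R^{n+1})$ (indeed in $C^r$). The gain over mere boundedness here comes from the compact embedding $V^k\hookrightarrow C^r(\bar\Omega)$ afforded by $k\ge(n+1)/2+r+1$, and this is the delicate point; I treat it as given. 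Granting it, both assertions are continuity-of-composition arguments.

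For part (i) I verify the two conditions defining $\SA^k_\AdmSetConst$ for the weak limit $v$. The bound $\|v\|_{\SH^k}\le\AdmSetConst$ is immediate from weak lower semicontinuity of the Hilbert norm, $\|v\|_{\SH^k}\le\liminf_l\|v^l\|_{\SH^k}\le\AdmSetConst$. Since $\phi^v$ is a $C^r$ diffeomorphism of $\Omega$, its restriction $\psi^v=\phi^v|_M$ is a $C^r$ embedding, so it remains only to show $\psi^v(M)=N$. Uniform convergence $\psi^{v^l}\to\psi^v$ together with $\psi^{v^l}(M)=N$ and the closedness of $N$ gives $\psi^v(M)\subseteq N$. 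Conversely, fix $q\in N$ and choose $p_l\in M$ with $\psi^{v^l}(p_l)=q$; by compactness of $M$ a subsequence $p_{l_j}\to p\in M$, and the two-term estimate
$$
|\psi^{v^{l_j}}(p_{l_j})-\psi^v(p)|\le\|\psi^{v^{l_j}}-\psi^v\|_{C^0}+|\psi^v(p_{l_j})-\psi^v(p)|\to0
$$
yields $\psi^v(p)=q$. Hence $N\subseteq\psi^v(M)$, so $\psi^v\in\diff^r(M,N)$ and $v\in\SA^k_\AdmSetConst$; the set is sequentially weakly closed.

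For part (ii) I argue pointwise. Fix $p_0\in M$, let $e_1,\dots,e_n$ be a $g_M$-orthonormal basis of $T_{p_0}M$, and extend $b$ to a continuous field $\tilde b$ of $(0,s)$-tensors on $\R^{n+1}$ (for instance by precomposing with the orthogonal projections onto the tangent spaces of $N$ and applying Tietze), so that, for $w\in\{v^l,v\}$,
$$
\big((\psi^w)^\ast b\big)(p_0)(e_{i_1},\dots,e_{i_s})=\tilde b\big(\psi^w(p_0)\big)\big(D\psi^w(p_0)e_{i_1},\dots,D\psi^w(p_0)e_{i_s}\big).
$$
The $C^1$ convergence gives $\psi^{v^l}(p_0)\to\psi^v(p_0)$ and $D\psi^{v^l}(p_0)e_i\to D\psi^v(p_0)e_i$ in $\R^{n+1}$; continuity of $\tilde b$ and of the multilinear evaluation map then force each scalar component above to converge as $l\to\infty$. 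Since by formula~\eqref{eq:gXastnorm} the fiber norm $\|\cdot\|$ at $p_0$ is the Euclidean norm of the array of these components, I conclude
$$
\|(\psi^{v^l})^\ast b-(\psi^v)^\ast b\|(p_0)\to0,
$$
as required.

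The main obstacle is thus entirely contained in the cited flow-convergence statement: weak convergence of the $v^l$ does not provide strong (or uniform) convergence of the maps $x\mapsto v^l(x,t)$, yet one needs $C^1$ convergence of the induced flows and, for part (i), surjectivity of the limit onto $N$. The resolution rests on the compactness of $V^k\hookrightarrow C^r(\bar\Omega)$ combined with an Arzel\`a--Ascoli argument for the trajectories and identification of the limit through the weak convergence, which I import from Appendix~\ref{Append:A}; the remaining steps are the routine continuity arguments given above.
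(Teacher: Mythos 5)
Your proposal is correct and follows essentially the same route as the paper: both parts rest on the flow-convergence results of Appendix~\ref{Append:A} (lemma~\ref{lemma:DerBded}), with part~(i) obtained from weak lower semicontinuity of the norm plus convergence of the time-one maps, and part~(ii) from convergence of $\psi^{v^l}(p_0)$ and $D\psi^{v^l}(p_0)$ together with continuity of $b$. The only cosmetic differences are that the paper establishes $N\subseteq\psi^v(M)$ via pointwise convergence of the inverse maps $(\phi^{v^l})^{-1}$ rather than your compactness/subsequence argument, and in part~(ii) it telescopes the difference of pullback components and bounds each term using lemma~\ref{lemma:MUltNormBded} instead of extending $b$ to the ambient space by Tietze; both variants are sound.
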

\begin{proof}
(i)
 Let $\{v^l\}_{l=1}^\infty \subset \SA^k_\AdmSetConst$ and suppose that $v^l$ converges weakly
to some $v \in \SH^k$ as $l \to \infty$.
 We
will show that $v \in \SA^k_\AdmSetConst$.

By lemma~\ref{lemma:DerBded}, $\eta^{v^l}(t;t_{0},x) \to \eta^v(t;t_{0},x)$ as $l \to \infty$ (in
the Euclidean norm) for all $t, t_{0} \in [0,1]$ and $x \in \Omega$. Thus,  the time-one maps generated by $v^{l}$ and
their inverses converge pointwise: $\phi^{v^l}(x) \to \phi^v(x)$ and $(\phi^{v^l})^{-1}(x) \to (\phi^v)^{-1}(x)$ as
$l \to \infty$ for all $x \in \Omega$. Because the manifolds $M$ and $N$ are
compact, $\phi^v(M)\subset N$ and
$(\phi^v)^{-1}(N) \subset M$. In view of these inclusions,  the $C^{r}$ diffeomorphism  $\phi^v$ of $\Omega$ restricted to $M$ is a diffeomorphism, that is,  $\psi^v=\phi^v|_M \in \diff^{r}(M,N)$.

By passing to the limit as $l\to \infty$ in the inequality $\|v\|_{\SH^k}^{2} \leq \langle v-v^{l},
v\rangle+\AdmSetConst\|v\|_{\SH^k}$, it follows that $\|v\|_{\SH^k}\leq \AdmSetConst$. Therefore $v \in \SA^k_\AdmSetConst$,  as required.

(ii) For simplicity, let us assume that $s=2$. 
Let $(U,\xi)$ be a chart on $M$ at $p_0$. 
It suffices to show that 
\begin{equation}
\label{eq:ComponentsConverge}
B^{l}:=(\phi^{v^l})^\ast b(X,Y)(p_0)-(\phi^{v})^\ast b(X,Y)(p_0)\to 0
\end{equation}
as $l \to \infty$ for all smooth vector fields $X, Y$ on $U$.

Using the notation 
\begin{eqnarray*}
q^l&=&\phi^{v^l}(p_0),\\
q&=&\phi^{v}(p_0),\\ 
Z^l_y&=&D\phi^{v^l}X\circ (\phi^{v^l})^{-1}(y), \\
Z_y&=&  D\phi^{v}X\circ (\phi^{v})^{-1}(y),\\
Q^l_y&=& D\phi^{v^l}Y\circ (\phi^{v^l})^{-1}(y), \mbox{ and }\\
Q_y&=&D\phi^{v}Y\circ (\phi^{v})^{-1}(y)
\end{eqnarray*}
for all $y \in N$, 
the quantity $B^{l}$ in expression~\eqref{eq:ComponentsConverge} is recast in the form
\begin{eqnarray}
\label{eq:Difference1}
\nonumber
B^{l}&=& b(q^l)(Z^l_{q^l}, Q^l_{q^l})-b(q)(Z_q, Q_q)\\
&=& 
b(q^l)(Z^l_{q^l}, Q^l_{q^l})
- b(q^l)(Z_{q^l}, Q^l_{q^l})\\
\label{eq:Difference2}
&&{}+b(q^l)(Z_{q^l}, Q^l_{q^l})
-b(q^l)(Z_{q^l}, Q_{q^l})\\
\label{eq:Difference3}
&&{}+b(q^l)(Z_{q^l}, Q_{q^l})
-b(q)(Z_q, Q_q).
\end{eqnarray}

Using definition~\ref{df:MultilinearNorm} and noting that the Riemannian metric $g_N$ is inherited from $\R^{n+1}$, we estimate difference~\eqref{eq:Difference1} as follows:
\begin{eqnarray*}
\label{est:Difference1}
b(q^l)(Z^l_{q^l}, Q^l_{q^l})
- b(q^l)(Z_{q^l}, Q^l_{q^l})\leq \|b(q^l)\|_{g_{N}(q_{l})} |Z^l_{q^l}-Z_{q^l}|_{\R^{n+1}}|Q^l_{q^l}|_{\R^{n+1}}.
\end{eqnarray*}
By lemma~\ref{lemma:DerBded},  $|Z^l_{q^l}-Z_{q^l}|_{\R^{n+1}} \to 0$ as $l \to \infty$, and $|Q^l_{q^l}|_{\R^{n+1}}$ is uniformly bounded in $l \in \N$. 
By lemma~\ref{lemma:MUltNormBded}, there exists a constant $C>0$ such that $\|b(q^{l})\|_{g_{N}(q^{l})}\leq C$ for all
$l \in \N$. Therefore, difference~\eqref{eq:Difference1} converges to zero as $l\to \infty$.
Similarly, it can be shown that difference~\eqref{eq:Difference2} converges to zero as $l\to \infty$.  Difference~\eqref{eq:Difference3} converges to zero as 
$l \to \infty$ because $z \mapsto b(z)(Z_z,Q_z)$ is a continuous function on $U$.
Hence,  $B^{l}\to 0$ as $l \to \infty$.
\end{proof}

We say that a functional $I:\SH^k\to\R$ is weakly continuous on $\SH^k$ if $I(v^l)\to I(v)$
whenever the sequence $\{v^l\}_{l=1}^\infty\subset\SH^k$ converges weakly to $v$ in $\SH^k$.

Recall that the inequality $k \geq (n+1)/2+r+1$ guarantees the embedding of
the Sobolev space $W^{k,2}_0(\Omega,\R^{n+1})$ into
$C^r(\bar{\Omega},\R^{n+1})$, where $r\geq 1$.
\begin{lemma}
\label{l123}
Assume that the constant $\AdmSetConst>0$ is large enough so that the set
$\SA^k_\AdmSetConst$ is not empty. Let the functionals $J, I_1, I_2$ be defined 
as in definition~\ref{df:I12}.\\
(i)
If $k \geq (n+1)/2+3$, then 
the functionals $J:\SA^k_\AdmSetConst \to \R_{+}$ and $I_1:\SA^k_\AdmSetConst \to
\R_{+}$ are weakly continuous.\\
(ii)
If $k \geq ({n+1})/{2}+4$, then
the functional $I_2:\SA^k_\AdmSetConst \to \R_{+}$  is weakly continuous.
\end{lemma}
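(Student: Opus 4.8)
The plan is to prove all three cases by the same two-step scheme: along a weakly convergent sequence, show that the integrand converges pointwise (on $M$ for $J$, on $M\times[0,1]$ for $I_1$ and $I_2$) and then invoke the bounded convergence theorem, the decisive input being a uniform bound on the integrand. So fix a sequence $\{v^l\}\subset\SA^k_P$ with $v^l\wto v$ in $\SH^k$; by lemma~\ref{lemma:PullBackConvergence}(i) the weak limit $v$ lies in $\SA^k_P$. Throughout I will use that the constraint $\|v^l\|_{\SH^k}\le P$, together with the embedding $V^k\hookrightarrow C^r(\bar\Omega)$ (valid since $k\ge(n+1)/2+r+1$) and the estimates on the evolution operator recalled in Appendix~\ref{Append:A}, yields a constant $C_P$, independent of $l$ and of $t\in[0,1]$, bounding the first (and, when $r\ge3$, the second) spatial derivatives of the flow maps $\phi^{v^l}$ and $F^{v^l}(\cdot,t)$ on $\bar\Omega$.

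For $J$, I would apply lemma~\ref{lemma:PullBackConvergence}(ii) with $b=\tau_N$ to obtain $\|(\psi^{v^l})^\ast\tau_N-(\psi^v)^\ast\tau_N\|(p_0)\to0$ for each $p_0\in M$; since the fiber norm is taken with respect to the fixed metric $g_M$, the map $\beta\mapsto\|\beta-\tau_M(p_0)\|^2$ is a continuous (polynomial) function of the components of $\beta$ on the fiber over $p_0$, so the integrand of $J(v^l)$ converges pointwise to that of $J(v)$. For the bound, evaluating $(\psi^{v^l})^\ast\tau_N(p_0)$ on a $g_M$-orthonormal frame and using formula~\eqref{eq:gXastnorm} controls it by $(\sup_N|\tau_N|)\,C_P^2$, so $\|(\psi^{v^l})^\ast\tau_N-\tau_M\|^2(p_0)\le K$ uniformly in $l$ and $p_0$; as $M$ is compact, bounded convergence gives $J(v^l)\to J(v)$. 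Only first derivatives of $\psi^{v^l}$ enter, so the regularity $r\ge2$, i.e. $k\ge(n+1)/2+3$, suffices.

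For $I_1$, I would first rewrite the integrand using the fact that, for the embedding $F^v(\cdot,t)\colon M\to M^{v,t}$, the pullback of the induced metric $g_t^v$ coincides with the pullback of the fixed Euclidean metric $e$ on $\R^{n+1}$, so that $F^v(\cdot,t)^\ast g_t^v=F^v(\cdot,t)^\ast e$ and $I_1(v)=\int_0^1\!\int_M\|F^v(\cdot,t)^\ast e-g_M\|^2\,\w_M\,dt$. The argument of lemma~\ref{lemma:PullBackConvergence}(ii) then carries over with only notational changes, $\psi^v$ being replaced by the time-$t$ map $F^v(\cdot,t)$ and $b$ by the (constant, hence trivially continuous) tensor $e$, using that $\eta^{v^l}(t;0,x)\to\eta^v(t;0,x)$ and $D\eta^{v^l}(t;0,x)\to D\eta^v(t;0,x)$ for every $t$ by lemma~\ref{lemma:DerBded}; this gives pointwise convergence of the integrand on $M\times[0,1]$. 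The uniform bound $C_P$ on $DF^{v^l}(\cdot,t)$ bounds the integrand uniformly, and bounded convergence on the finite-measure space $M\times[0,1]$ yields $I_1(v^l)\to I_1(v)$.

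The genuinely different, and hardest, case is $I_2$, and it is the reason for the extra derivative in (ii). The second fundamental form $\II_t^v$ of the intermediate state is \emph{not} the pullback of a fixed ambient tensor: in a chart on $M$ one has $(F^v(\cdot,t)^\ast\II_t^v)(X,Y)(p)=\langle D^2F^v(\cdot,t)(X,Y),\nu\rangle$, where the unit normal $\nu$ to $M^{v,t}$ is itself a smooth function of the full-rank matrix $DF^v(\cdot,t)$. Thus the integrand is a continuous function $\Psi$ of the first and second spatial derivatives of the flow map, and the plan is to (a) prove $DF^{v^l}(\cdot,t)(p)\to DF^v(\cdot,t)(p)$ and $D^2F^{v^l}(\cdot,t)(p)\to D^2F^v(\cdot,t)(p)$ pointwise, which requires convergence of the \emph{second} spatial derivatives of the evolution operator and hence $r\ge3$, i.e. $k\ge(n+1)/2+4$; (b) deduce pointwise convergence of the integrand from continuity of $\Psi$; and (c) bound the integrand uniformly via $C_P$ and apply bounded convergence. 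The main obstacle is precisely step (a): extracting the pointwise convergence and uniform boundedness of the second spatial derivatives $D^2F^{v^l}(\cdot,t)$ from the Appendix estimates, together with verifying that $\Psi$—in particular the dependence of $\nu$ on $DF$—is continuous on the full-rank region, which is where the codimension-one structure is used.
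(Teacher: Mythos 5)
Your proposal is correct and follows essentially the same route as the paper: pointwise convergence of the integrands via lemma~\ref{lemma:PullBackConvergence} and the Appendix results on the evolution operator, uniform bounds from the derivative estimate~\eqref{eq:DerBded} together with the norm equivalence~\eqref{eq:NormsEquiv}, and dominated convergence, with the pullback of $\II_t^v$ expressed through $\langle \SN,D_x^2F\rangle$ exactly as in equation~\eqref{eq:2FFormPB}. The step you flag as the main obstacle for $I_2$ is precisely what lemma~\ref{lemma:DerBded} supplies once $r\geq 3$ (convergence in $\|\cdot\|_{r-1,\infty}$ and the uniform bound in $\|\cdot\|_{r,\infty}$ give the needed control of second spatial derivatives), and the paper likewise reduces statement~(III) to the convergence of the unit normals~\eqref{eq:NormalConv} without further detail.
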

\begin{proof}
Let $\{v^l\}_{l=1}^\infty \subset \SA^k_\AdmSetConst$ and suppose that $v^l$ converges weakly
to some $v \in {\SH^k}$ as $l \to \infty$ (in symbols $v^l \wto v \in {\SH^k}$). 
By lemma~\ref{lemma:PullBackConvergence}, $v\in \SA^k_\AdmSetConst$ and $J(v)$, 
$I_1(v)$, and $I_2(v)$ are well-defined.

\noindent (i) We will show that $\lim_{l \to \infty}J(v^l)= J(v)$. 

Let $G:=g_M^\ast \otimes g_M^\ast$. For  tensor fields
$a,b \in \ST^{0}{}_{2}(M)$ and every $p \in M$, we have the equality
$$| \|a\|^2(p)-\|b\|^2(p)|=|G(a+b,a-b)(p)|\leq\|a+b\|(p)\|a-b\|(p).$$  
By applying the Cauchy-Schwarz inequality, we obtain the inequality
\begin{eqnarray}
|J(v^l)-J(v)|&\leq& \int_M\|(\psi^{v^l})^\ast \tau_N+(\psi^{v})^\ast \tau_N-2\tau_M\|
\|(\psi^{v^l})^\ast \tau_N-(\psi^{v})^\ast \tau_N\|\w_M
\nonumber
\\
\nonumber
&\leq&
\big( \int_M\|(\psi^{v^l})^\ast \tau_N+(\psi^{v})^\ast \tau_N-2\tau_M\|^2\w_M\big)^{1/2}\\
\label{Jineq}
&&{}\times
\big(\int_M\|(\psi^{v^l})^\ast \tau_N-(\psi^{v})^\ast \tau_N\|^2\w_M\big)^{1/2}.
\end{eqnarray}
By lemma~\ref{lemma:PullBackConvergence}, 
\begin{equation}
\label{eq:PBConvergence}
\lim_{l\to \infty}\|(\psi^{v^l})^\ast \tau_N-(\psi^v)^\ast \tau_N\|^2(p)= 0
\end{equation}
for all $p \in M$.

Let $K>0$ be the constant in display~\eqref{eq:DerBded} of lemma~\ref{lemma:DerBded}. By
lemma~\ref{lemma:MUltNormBded} and because the manifold $N$ is compact, there exists
a constant $C>0$ such that $\|\tau_{N}(z)\|_{g_{N}(z)}\leq C$ for all $z \in N$.
Using the equivalence of norms~\eqref{eq:NormsEquiv}, we estimate
\begin{eqnarray}
\nonumber
\|(\psi^{v^{l}})^{\ast}\tau_{N}\|(p) &\leq & n^{s/2}\|(\psi^{v^{l}})^{\ast}\tau_{N}(p)\|_{g_{M}(p)}\\
\nonumber
&\leq& n^{s/2}\|\tau_{N}(\psi^{v^{l}}(p))\|_{g_{N}(\psi^{v^{l}}(p))}|D\psi^{v^{l}}(p)|^{s}\\
\label{eq:PullBackBded}
&\leq& n^{s/2} C K^s.
\end{eqnarray}

Using inequalities~\eqref{Jineq} and ~\eqref{eq:PullBackBded}, limit~\eqref{eq:PBConvergence}, and the Dominated
Convergence Theorem, we conclude that $J(v^l) \to J(v)$ as $l \to \infty$.

Let us show that the functional $I_1$ is weakly continuous. By an estimate analogous to~\eqref{Jineq}, it suffices to prove the following statements.
\begin{itemize}
\item[(I)] If $p \in M$ and $t \in [0,1]$, then
$$
\lim_{l \to \infty}\|F^{v^l}(\cdot, t)^\ast g^{v^l}_t-F^{v}(\cdot, t)^\ast g^v_t\|^2(p) = 0.
$$
 \item[(II)] There exists $S_1>0$ such that 
$$
\|F^{v^l}(\cdot,t)^\ast g^{v^l}_t\|^2(p) \leq S_1
$$ 
for all $p \in M$, $t \in [0,1]$, and $l \in \N$.
\end{itemize}

Because all the Riemannian metrics are inherited from $\R^{n+1}$, whose standard inner product is denoted by $\langle\cdot,\cdot\rangle$,
we have
\begin{eqnarray*}
 F^{v^l}(\cdot,t)^\ast g_t^{v^l}(p)(X,Y)-F^v(\cdot,t)^\ast g_t^v(p)(X,Y)
&=&
\langle D_xF^{v^l}(p,t)X,D_xF^{v^l}(p,t)Y \rangle\\
&&{}-\langle D_xF^{v}(p,t)X,D_xF^{v}(p,t)Y\rangle,
\end{eqnarray*}
for all $p \in M$, $X, Y \in T_{p}M$, and $t \in [0,1]$,
where $D_{x}$ denotes the derivative with respect to the spatial variable.
The right-hand side of this equation converges to zero as $l \to \infty$ by lemma~\ref{lemma:DerBded}.
This completes the proof of statement~(I).

By the same lemma and inequality~\eqref{eq:NormsEquiv}, for every $p \in M$ we have
\begin{eqnarray*}
\|F^{v^{l}}(\cdot,t)^\ast g_{t}^{v^{l}}\|(p)&\leq& n \|F^{v^{l}}(\cdot,t)^{\ast}g_{t}^{v^{l}}(p)\|_{g_{M}(p)}\\
&\leq&n |D_{x}F^{v^{l}}(p,t)|^{2}\\
&\leq&n K^{2}.
\end{eqnarray*}
This inequality implies statement (II).

(ii) We will show the weak continuity of the functional $I_2$.
By an estimate analogous to~\eqref{Jineq}, it suffices to show two facts:
\begin{itemize}
\item[(III)] If $p \in M$ and $t \in [0,1]$, then
$$
\lim_{l\to \infty}\|F^{v^l}(\cdot, t)^\ast \II^{v^l}_t-F^{v}(\cdot, t)^\ast \II^v_t\|^2(p) = 0.
$$
\item[(IV)] There exists $S_2>0$ such that 
$$
\|F^{v^l}(\cdot,t)^\ast \II^{v^l}_t\|^2(p) \leq S_2
$$
for all $p \in M$, $t \in [0,1]$ and $l \in \N$.
\end{itemize}

We will first prove statement~(IV).

Consider a morph $F^w$ generated by a time-dependent vector field 
$w\in \SA^k_\AdmSetConst$.
By definition of morphs of class $\SM^{r,\text{ac}}(M,N)$ in section~\ref{subs:MPDR}, the orientation of each intermediate manifold 
$M^{w,t}$, where $t \in [0,1]$, is induced by the $C^2$
diffeomorphism $F(\cdot,t):M \to M^{w,t}$. 
Let $\SN^{w,t}(z)$ denote the unit normal to the  manifold $M^{w,t}$ at the
point $z \in M^{w,t}$. We assume that for every positively oriented basis
$\{X_i\}_{i=1}^n$ of $T_zM^{w,t}$, the set of vectors $\{X_1,\ldots,X_n,\SN^{w,t}(z)\}$ is positively oriented in $\R^{n+1}$.

For $p\in M$, let $(U,\xi)$ be a chart at $p$, choose two smooth vector fields $X$ and $Y$ on $U$, and let
$\gamma:[0,1]\to U$ be a
$C^1$ curve at $p$ such that $\dot{\gamma}(0)=X_{p}$.
It is evident that the inner product 
\begin{equation}
\label{eq:InnProd1}
\langle \SN^{w,t}(F^{w}(\gamma(s),t)),D_xF^{w}(\gamma(s),t) Y_{\gamma(s)}\rangle=0
\end{equation}
for every $t,s \in [0,1]$.
Let us recall that for every $t \in [0,1]$ the function $x\mapsto F^{w,t}(x,t)$ is defined for all $x \in \Omega$
and denote its second derivative at $x\in \Omega$ by $D^2_xF(x,t)$.
By differentiating expression~\eqref{eq:InnProd1} with respect to $s$ at $s=0$, we obtain the equality
\begin{eqnarray}
\nonumber
F^w(\cdot,t)^\ast\II^w_t (p)(X_{p},Y_{p})&=&\langle \bar{\nabla}_{D_xF^{w}(p,t)X_p}\SN^{w,t}(F^{w}(p,t)),D_xF^{w}(p,t)Y_{p}\rangle\\
\label{eq:2FFormPB}
&=&
-
\langle \SN^{w,t}(F^{w}(p,t)),D_x^2F^{w}(p,t)[X_p,Y_p]\rangle,
\end{eqnarray}
where $\bar{\nabla}$ denotes the standard Riemannian connection on  $\R^{n+1}$ (see~\cite{H}).

For every $p \in M$, let $W_{p}, Q_{p}\in T_{p}M$ be unit length vectors such that
$$\|F^{v^l}(\cdot,t)^\ast\II^{v^l}_t(p)\|_{g_{M}(p)}=|F^{v^l}(\cdot,t)^\ast\II^{v^l}_t(p)(W_{p},Q_{p})|.$$
Using inequality~\eqref{eq:NormsEquiv} and equation~\eqref{eq:2FFormPB}, we have the estimates
\begin{eqnarray*}
\label{eq:PBBded}
\|F^{v^l}(\cdot,t)^\ast\II^{v^l}_t\|(p)&\leq& n \|F^{v^l}(\cdot,t)^\ast\II^{v^l}_t(p)\|_{g_{M}(p)}\\
&=&n |F^{v^l}(\cdot,t)^\ast\II^{v^l}_t(p)(W_{p},Q_{p})|\\
&=&n |\langle \SN^{v^{l},t}(F^{v^{l}}(p,t)),D_x^2F^{v^{l}}(p,t)[W_p,Q_p]\rangle|\\
&\leq&n K.
\end{eqnarray*}
This completes the proof of statement~(IV).

By lemma~\ref{lemma:DerBded}, if $\al\in \{0,1,2\}$, then the derivative 
$D_x^\al F^{v^l}(p,t)$ converges to $D_x^\al F^{v}(p,t)$ as $l \to \infty$ in the Euclidean norm for every $p\in M$ and $t\in [0,1]$.  Taking into account equation~\eqref{eq:2FFormPB},
we see that statement~(III) follows from the convergence 
\begin{equation}
\label{eq:NormalConv}
\SN^{v^l,t}(F^{v^l}(p,t)) \to\SN^{v,t}(F^{v}(p,t))
\end{equation}
as $l\to \infty$ in $\R^{n+1}$ for every $p\in M$ and $t \in [0,1]$. 
\end{proof}

\begin{theorem}
\label{th123}
Assume that the constant $\AdmSetConst>0$ is large enough so that the set
$\SA^k_\AdmSetConst$ is not empty.\\
(i) If $k\geq ({n+1})/{2}+3$, then there exists a minimizer of the bending 
distortion energy functional $E$ in the admissible set $\SA^k_\AdmSetConst$.\\
(ii) If $k\geq ({n+1})/{2}+4$, then there exists a minimizer of the morphing 
distortion energy functional $\SE$ in the admissible set $\SA^k_\AdmSetConst$.
\end{theorem}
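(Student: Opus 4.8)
The plan is to apply the direct method of the calculus of variations, leaning entirely on the two structural facts already in hand: the admissible set $\SA^k_\AdmSetConst$ is nonempty, norm-bounded by $\AdmSetConst$, and sequentially weakly closed in the Hilbert space $\SH^k$ (by the nonemptiness lemma proved at the start of this section together with lemma~\ref{lemma:PullBackConvergence}(i)), while the auxiliary functionals $J$, $I_1$, and $I_2$ are weakly continuous under the stated hypotheses on $k$ (lemma~\ref{l123}). Once these are invoked, the theorem is a soft compactness argument rather than a new analytic estimate.

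First I would record that both energies decompose as nonnegative linear combinations of weakly continuous building blocks. Choosing in definition~\ref{df:I12} the pairs $(\tau_M,\tau_N)=(g_M,g_N)$ and $(\tau_M,\tau_N)=(\II_M,\II_N)$, each of type $(0,2)$, produces two instances $J_g$ and $J_{\II}$ of the functional $J$, and by definition~\ref{df:ESE} one has $E=B_1 J_g+B_2 J_{\II}$; likewise $\SE=B_1 I_1+B_2 I_2$. Since $B_1,B_2\geq 0$, weak continuity of each summand passes to the sum. By lemma~\ref{l123}(i) the functional $J$ is weakly continuous for \emph{every} continuous $(0,s)$-tensor field as soon as $k\geq (n+1)/2+3$, so both $J_g$ and $J_{\II}$, and hence $E$, are weakly continuous at that threshold; by lemma~\ref{l123}(i)--(ii), $\SE$ is weakly continuous as soon as $k\geq (n+1)/2+4$. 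These are exactly the two regularity bounds appearing in parts (i) and (ii). (Full weak continuity is in fact stronger than the weak lower semicontinuity the direct method strictly requires.)

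Next I would run the minimizing-sequence argument for $E$. Because $E\geq 0$ and the infimum is over a nonempty set, $m:=\inf_{\SA^k_\AdmSetConst}E$ is a finite nonnegative number; choose a minimizing sequence $\{v^l\}\subset\SA^k_\AdmSetConst$ with $E(v^l)\to m$. Since $\SA^k_\AdmSetConst$ lies in the closed ball of radius $\AdmSetConst$ in $\SH^k$, the sequence $\{v^l\}$ is bounded, and as $\SH^k$ is a Hilbert space, bounded sequences are weakly sequentially precompact; after passing to a subsequence, $v^{l_j}\wto v^\ast$ for some $v^\ast\in\SH^k$. Sequential weak closedness of $\SA^k_\AdmSetConst$ (lemma~\ref{lemma:PullBackConvergence}(i)) gives $v^\ast\in\SA^k_\AdmSetConst$, and weak continuity of $E$ gives $E(v^\ast)=\lim_j E(v^{l_j})=m$, so $v^\ast$ minimizes $E$ on $\SA^k_\AdmSetConst$. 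This proves part (i). Part (ii) follows verbatim with $\SE$ in place of $E$ and the stronger requirement $k\geq (n+1)/2+4$, which is the threshold forced by the second-fundamental-form term $I_2$.

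I do not anticipate a genuine obstacle, since all the real analytic labor—pointwise convergence of the pullbacks, uniform bounds on the metric and second-fundamental-form contributions, and the passage to the limit via the Dominated Convergence Theorem—has already been packaged into lemmas~\ref{lemma:PullBackConvergence} and~\ref{l123}. The only point needing a word of care is the bookkeeping that the single weak-continuity statement for $J$, being phrased for an arbitrary continuous $(0,s)$-tensor field, genuinely covers both the $g$-term and the $\II$-term of $E$, and that the two $k$-thresholds in the theorem are inherited respectively from $J$ (and $I_1$) and from the more demanding functional $I_2$.
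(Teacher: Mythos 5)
Your proposal is correct and follows essentially the same route as the paper: a minimizing sequence in the bounded, sequentially weakly closed set $\SA^k_\AdmSetConst$ has a weakly convergent subsequence whose limit is a minimizer by the weak continuity of $E$ (respectively $\SE$) from lemma~\ref{l123}. Your explicit remark that $E=B_1J_g+B_2J_{\II}$ and $\SE=B_1I_1+B_2I_2$, so that weak continuity passes to the nonnegative combinations, is left implicit in the paper but is exactly the intended reading.
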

\begin{proof}
Let $\{v^{l}\}_{l=1}^{\infty}\subset \SA^k_\AdmSetConst$ be a minimizing sequence of $E$, that is   $$\lim_{l \to \infty}E(v^{l})=\inf_{w\in
\SA^k_\AdmSetConst}E(w)\geq 0.$$  By
lemma~\ref{lemma:PullBackConvergence}, the set $\SA^k_\AdmSetConst$ is sequentially weakly closed and bounded. Therefore, there exists
a weakly
convergent subsequence $\{v^{l_{k}}\}_{k=1}^{\infty}$ with the weak limit $v \in \SA^k_\AdmSetConst$. The functional $E$ is weakly
continuous by lemma~\ref{l123}. Therefore, $E(v)=\inf_{w\in \SA^k_\AdmSetConst}E(w)$ and $v$ is a minimizer of $E$.

The existence of minimizers for the functional $\SE$ is proved in the same fashion.
\end{proof}

\begin{remark}
Theorem~\ref{th123} implies the existence of minimizers of the functional $\Lambda$ defined in display~\eqref{eq:Funct2} in the admissible set
$$\SB^k_P:=\{h \in \diff^2(M,N): h=\psi^v \text{ for some } v\in \SA^k_P\}.$$
The set $\SB^k_P$, among other maps, contains smooth diffeomorphisms $f:M \to N\subset \R^{n+1}$ that are homotopic to the inclusion map 
$i:M\to \R^{n+1}$ and generate time-dependent vector fields in $\SA^k_P$.

To minimize the distortion energy of
diffeomorphisms from other isotopy classes,  we consider the family of maps $\{\psi^v\circ \phi \in \diff^r(M,N): v\in \SA^k_P\}$,
where $\phi$ is a fixed diffeomorphism of $M$.
Similarly, given a smooth isotopy $G:[0,1]\times M\to M$, we consider the family of morphs $\{F^v_G\in 
\SM^{r,\text{ac}}(M,N): v \in \SA^k_P\}$, where 
$F^v_G(p,t)=F^v(G(p,t),t)$ for all $(p,t)\in M\times [0,1]$, 
as candidates for minimal distortion morphs. The most interesting example of this generalization is, perhaps, 
the case where $G(p,t)=\phi(p)$
for some fixed diffeomorphism $\phi:M \to N$, so that the admissible isotopies are from
the class of morphs $F^v(\phi(p),t)$ generated by time-dependent vector fields in $\SA^k_P$, where $p \in M$ and $t \in [0,1]$.

The latter idea leads to the definition of the functionals
\begin{eqnarray*}
 E_\phi(v)=E_\phi(v;B_1,B_2)&=& B_1 \int_M\|(\psi^v\circ \phi)^\ast g_N-g_M\|^2\w_M\\
&&{}+B_2 \int_M\|(\psi^v\circ \phi)^\ast \II_N-\II_M\|^2\w_M
\end{eqnarray*}
and
\begin{eqnarray*}
\SE_G(v)=\SE_G(v;B_1,B_2)&=&B_1 \int_0^1\int_M\|F^v_G(\cdot,t)^\ast g^v_t-g_M\|^2\,\w_M dt\\
&&{}+B_2 \int_0^1\int_M\|F^v_G(\cdot,t)^\ast \II^v_t-\II_M\|^2\,\w_M dt,
\end{eqnarray*}
where $B_1$ and $B_2$ are nonnegative real numbers
(cf. definition~\ref{df:ESE}), $\phi \in \diff(M)$ and $G:M\times[0,1]\to M$ is an isotopy. .
\end{remark}

Theorem~\ref{th123} can be easily generalized to show that for $P>0$ and $k \in \N$ sufficiently large, both functionals $E_\phi$ and $\SE_G$ have minimizers in $\SA^k_P$ for
every diffeomorphism $\phi:M\to M$ and isotopy $G:M\times[0,1]\to M$. 
\begin{theorem}
\label{phith123}
Assume that the constant $\AdmSetConst>0$ is large enough so that the set
$\SA^k_\AdmSetConst$ is not empty. Let $\phi \in \diff(M)$ and
let $G:M\times[0,1]\to M$ be an isotopy.\\
(i) If $k\geq ({n+1})/{2}+3$, then there exists a minimizer of the bending 
distortion energy functional $E_\phi$ in the admissible set $\SA^k_\AdmSetConst$.\\
(ii) If $k\geq ({n+1})/{2}+4$, then there exists a minimizer of the morphing 
distortion energy functional $\SE_G$ in the admissible set $\SA^k_\AdmSetConst$.\\
(iii) If $k\geq ({n+1})/{2}+3$, then both functionals
$\Phi$ and $\Lambda$ defined in displays~\eqref{eq:Funct1} and~\eqref{eq:Funct2} respectively have minimizers in the admissible set
$$\SB^k_{P,\,\phi}:=\{h \in \diff^2(M,N): h=\psi^v\circ \phi \text{ for some } v\in \SA^k_P\}.$$
\end{theorem}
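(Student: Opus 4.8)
The plan is to follow the direct method exactly as in the proof of Theorem~\ref{th123}. Since Lemma~\ref{lemma:PullBackConvergence}(i) already establishes that $\SA^k_\AdmSetConst$ is bounded and sequentially weakly closed in $\SH^k$, any minimizing sequence for $E_\phi$, $\SE_G$, $\Phi$, or $\Lambda$ admits a weakly convergent subsequence whose limit lies in $\SA^k_\AdmSetConst$. Hence it suffices to prove that $E_\phi$ and $\SE_G$ are weakly continuous, from which parts (i) and (ii) follow verbatim as in Theorem~\ref{th123}; part (iii) is then deduced by identifying $\Phi$ and $\Lambda$ with $E_\phi$ for particular weights $B_1,B_2$.

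For part (i) the key observation is the functorial identity $(\psi^v\circ\phi)^\ast=\phi^\ast\circ(\psi^v)^\ast$, so that for any continuous tensor field $b$ on $N$ and any $p\in M$,
\begin{equation*}
(\psi^v\circ\phi)^\ast b\,(p)(X_1,\ldots,X_s)=(\psi^v)^\ast b\,(\phi(p))\big(D\phi(p)X_1,\ldots,D\phi(p)X_s\big).
\end{equation*}
Because $\phi$ is a fixed $C^\infty$ diffeomorphism of the compact manifold $M$, the derivative $D\phi$ is uniformly bounded, so the operator norm of Definition~\ref{df:MultilinearNorm} together with the equivalence~\eqref{eq:NormsEquiv} reduces the pointwise convergence of $(\psi^{v^l}\circ\phi)^\ast b$ at $p$ to the convergence of $(\psi^{v^l})^\ast b$ at the point $\phi(p)$, which is precisely Lemma~\ref{lemma:PullBackConvergence}(ii). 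The uniform domination required for the Dominated Convergence Theorem follows the same way from estimate~\eqref{eq:PullBackBded} and the bound on $|D\phi|$. Thus the argument of Lemma~\ref{l123}(i) applies to each of the two terms of $E_\phi$ (with $\tau_N$ replaced by $g_N$ and by $\II_N$, both fixed continuous tensor fields on $N$), yielding the weak continuity of $E_\phi$ under $k\ge(n+1)/2+3$.

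For part (ii), since each $G(\cdot,t)$ is a diffeomorphism of $M$, the image $F^v_G(M,t)=F^v(M,t)=M^{v,t}$ is unchanged and $F^v_G(\cdot,t)^\ast=G(\cdot,t)^\ast\circ F^v(\cdot,t)^\ast$. Hence the integrands of $\SE_G$ are pull-backs by the fixed map $G(\cdot,t)$ of exactly the quantities $F^v(\cdot,t)^\ast g^v_t$ and $F^v(\cdot,t)^\ast\II^v_t$ analyzed in Lemma~\ref{l123}(ii). Applying the chain rule $D_pF^v_G(p,t)=D_xF^v(G(p,t),t)\,D_pG(p,t)$, and its second-order analogue through~\eqref{eq:2FFormPB}, the pointwise convergence statements analogous to (I) and (III) follow from Lemma~\ref{lemma:DerBded} and the normal convergence~\eqref{eq:NormalConv} evaluated at $G(p,t)$, while the uniform bounds analogous to (II) and (IV) follow from the bounds of Lemma~\ref{l123}(ii) multiplied by powers of $|D_pG|$ and $|D_p^2G|$. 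The one point needing care is that the Dominated Convergence Theorem is now applied over the double domain $M\times[0,1]$, so every bound must be uniform in $t$ as well; this holds because $G$ and its first and second spatial derivatives are continuous on the compact set $M\times[0,1]$. With weak continuity established, parts (i) and (ii) follow by the direct method as in Theorem~\ref{th123}.

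Finally, for part (iii), observe that for $h=\psi^v\circ\phi$ one has $\Phi(h)=E_\phi(v;1,0)$ and $\Lambda(h)=E_\phi(v;1,1)$. Since $v\mapsto\psi^v\circ\phi$ maps $\SA^k_\AdmSetConst$ onto $\SB^k_{P,\,\phi}$ by definition, the infimum of $\Phi$ (respectively $\Lambda$) over $\SB^k_{P,\,\phi}$ equals the infimum of $E_\phi(\cdot;1,0)$ (respectively $E_\phi(\cdot;1,1)$) over $\SA^k_\AdmSetConst$, and a minimizer $v^\ast\in\SA^k_\AdmSetConst$ from part (i) yields the minimizer $h^\ast=\psi^{v^\ast}\circ\phi$ in $\SB^k_{P,\,\phi}$; the regularity $h^\ast\in\diff^2(M,N)$ holds because $\psi^{v^\ast}\in\diff^r(M,N)$ with $r\ge2$ under $k\ge(n+1)/2+3$ and $\phi$ is smooth. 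I expect the main obstacle to be the bookkeeping in part (ii)—in particular, verifying that composition with the fixed isotopy $G$ preserves both the pointwise convergence and the $t$-uniform domination of the second-fundamental-form integrand—though no genuinely new analytic difficulty arises beyond what Lemmas~\ref{lemma:PullBackConvergence} and~\ref{l123} already supply.
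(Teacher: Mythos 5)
Your proposal is correct and follows essentially the same route as the paper: the authors likewise observe that pulling back by the fixed diffeomorphism $\phi$ (respectively composing with the fixed isotopy $G$) preserves both the pointwise convergence to zero and the uniform boundedness of the relevant tensor fields, so that lemma~\ref{l123} generalizes and the direct-method argument of theorem~\ref{th123} goes through unchanged, with part~(iii) deduced from part~(i) exactly as you do. Your write-up is in fact more detailed than the paper's, which states the key observation about sequences $\{b^l\}\subset\ST^0{}_s(M)$ and leaves the rest as an easy generalization.
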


The latter theorem is an easy generalization of theorem~\ref{th123}.
More precisely, let $\{b^l\}_{l=1}^\infty$ be a sequence of tensor fields in 
$\ST^0{}_s(M)$ such that  $\lim_{l\to \infty}\|b^l\|(p)=0$ and 
$\|b^l(p)\|\leq K$ for all $p\in M$ and $l\in \N$, where $K$ is a positive 
constant and
 let $\phi\in \diff(M)$. Then $\lim_{l\to \infty}\|\phi^\ast b^l\|(p)=0$ 
and there exists a constant $K_1>0$ such that $\|\phi^\ast b^l(p)\|\leq K_1$ 
for all $p\in M $ and $l\in \N$. Using the above observation, lemma~\ref{l123} is 
easily generalized to the case where $\psi^v$ and $F^v$ are replaced with $\psi^v\circ \phi$ and $F^v_G$ respectively, and the proof of 
theorem~\ref{th123} remains the same.

In theorem~\ref{phith123}, the statement~(iii), which is equivalent to theorem~\ref{MainThm2}, follows from the statement (i).

\section{A Minimal Distortion  Morph}
\label{Sec:Example}
We have proved the existence of minimizers of the 
functionals $E$ and $\SE$, which produce 
minimal distortion diffeomorphisms and morphs between manifolds $M$ and $N$. 
In this section, we consider the special case where  $M=\Sph^1$ is the unit circle in the plane and $N=\Sph^1_R$  is the concentric circle of 
radius $R>1$ and construct a minimal distortion 
diffeomorphism and morph between them.

Our example of a minimal distortion morph in subsection~\ref{ex:MinDistMorph} demonstrates the importance of the 
bound $\|v\|_{{\SH^k}}\leq \AdmSetConst$ in the definition of the admissible set $\SA^k_\AdmSetConst$. 
If this bound is not imposed, there is 
a minimizing sequence of morphs $\{F_n\}_{n=1}^\infty$ such that the 
distortion energy 
\begin{eqnarray}
\nonumber
\Psi(F_n)&:=&\int_0^1 \int_M\|F_n(\cdot,t)^\ast g_t^n-g_M\|^2 \,\w_M\,dt\\
\label{eq:Psin}
&&{}+
\int_0^1 \int_M\|F_n(\cdot,t)^\ast \II_t^n-\II_M\|^2 \,\w_M\,dt
\end{eqnarray}
tends to 
zero, where $g_t^n$ and $\II_t^n$ are the first and the second fundamental forms
of the intermediate manifold $F_n(M,t)$ induced by its embedding into $\R^2$.
An example of such a sequence is $F_n(p,t)=\phi_n(t)p$ for all 
$t \in[0,1]$ and $p\in M$, where $\phi_n\in C^\infty(0,1)\cap C[0,1]$ is a 
function whose values remain in the segment $[1,R]$ and such that $\phi_n(t)=1$ for all $t \in[0,1-1/n]$ and
$\phi_n(1)=R$. 
From the representation of~\eqref{eq:Psin} in local coordinates (see~\eqref{eq:PsiLocal}) we derive
\begin{eqnarray}
\label{eq:phinSeq}
\Psi(F_n)&=&2\pi \int_0^1\big[(\phi_n^2-1)^2+(\phi_n-1)^2\big]\,dt\\
\nonumber
&\leq& 2 \pi [(R^2-1)^2+(R-1)^2]\frac{1}{n};
\end{eqnarray} 
hence, $\lim _{n \to \infty}\Psi(F_n)=0$. On the other hand, there is no morph
$H$ in the space $\SM^{r,\text{ac}}(M,N)$ with $r>1$
such that $\Psi(H)=0$: otherwise, $H(\cdot,1)$ would be an isometry between $M$
and $N$.
The sequence $\{F_n\}_{n=1}^\infty$ converges pointwise to the discontinuous morph
$$F(p,t)=
\left\{
\begin{array}{cl}
p,\quad & \mbox{ if } 0\leq t<1,\\
Rp, \quad &\mbox{ if } t=1
\end{array}
\right.
$$
whose distortion energy $\Psi(F)$
vanishes.

Theorem~\ref{th123} implies that every sequence of time-dependent 
vector fields $\{v^n\}_{n=1}^\infty\subset {\SH^k}$  such that each 
$v^n \in {\SH^k}$ generates the morph $F_n$ must be unbounded in ${\SH^k}$. 

In our example of a minimal distortion morph, we solve the optimization 
problem for the minimal distortion morph between 
$\Sph^1$ and $\Sph^1_R$ in the class of morphs,  whose intermediate states
are circles of increasing radii, that are generated by time-dependent vector fields whose norms are  
uniformly  bounded by a positive constant $\AdmSetConst$. 
Numerical solutions suggest  that the second time-derivative 
$\pd^2 F/\pd t^2$ of the minimal morph $F$ increases as $\AdmSetConst$ increases. In effect, the
choice of the constant $\AdmSetConst$ in the definition of the admissible set $\SA^k_\AdmSetConst$
sets a restriction on the magnitude of the curvature of the curves
$t\mapsto F(p,t)$, where $p \in M$.

We begin the construction of the minimal distortion morph with the example of 
a minimal distortion diffeomorphism between  
 $\Sph^1$ and $\Sph^1_R$. This example 
is based on the theory of minimal deformation (as measured by the functional
$\Phi$) bending 
of regular simple closed curves developed in~\cite{OpusculaPaper}. 

\subsection{A Minimal Distortion Diffeomorphism between two Circles}
\label{sec:3.1}
We will construct a minimal distortion diffeomorphism between the circles
$M=\Sph^1$ and $N=\Sph^1_R$.

For $r\geq 1$, we consider the functional $\Lambda:\diff^r(M,N)\to \R_+$ 
defined in display~\eqref{eq:Funct2}.
Also, using the radius $R>1$ of $\Sph^1_R$, we define the radial map $h_R:\R^2\to\R^2$  by $h_R(p)=R p$.
\begin{lemma}
\label{cor:MinPhiIandII}
The restriction of the radial map $h_R$ to $\Sph^1$ minimizes the functional 
$\Lambda:\diff^2(M,N)\to \R_+$ defined in display~\eqref{eq:Funct2}.
\end{lemma}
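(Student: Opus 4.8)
The plan is to reduce the claim to a one-dimensional variational inequality and then to establish a global affine lower bound (a calibration) for the resulting integrand. First I would parametrize $M=\Sph^1$ by the angle $\theta$ and $N=\Sph^1_R$ by the angle $\phi$, so that $g_M=d\theta^2$, $\w_M=d\theta$, and $e_1=\pd_\theta$ is an orthonormal frame on $M$. A direct computation gives $g_N(\pd_\phi,\pd_\phi)=R^2$ and, for the outward unit normal, $\II_N(\pd_\phi,\pd_\phi)=R$, while $\II_M(\pd_\theta,\pd_\theta)=1$. Every $h\in\diff^2(M,N)$ is recorded by a lift $\phi=\phi(\theta)$ with $\phi'\neq0$ and $\phi(\theta+2\pi)=\phi(\theta)\pm2\pi$. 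Using formula~\eqref{eq:gXastnorm} for the fiber norm in the frame $e_1$, the two pullbacks become scalars and I obtain
\begin{equation*}
\Lambda(h)=\int_0^{2\pi} f(\phi')\,d\theta,\qquad f(p):=(R^2p^2-1)^2+(Rp^2-1)^2.
\end{equation*}
Expanding, $f(p)=(R^4+R^2)p^4-2(R^2+R)p^2+2=:Ap^4-Bp^2+2$, so $f$ is even and, for the radial map ($\phi'\equiv1$), $\Lambda(h_R)=2\pi f(1)=2\pi[(R^2-1)^2+(R-1)^2]$.

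The only remaining constraint is topological: since $h$ is a degree $\pm1$ diffeomorphism of the circle, $\int_0^{2\pi}\phi'\,d\theta=\pm2\pi$, equivalently $\int_0^{2\pi}|\phi'|\,d\theta=2\pi$. The heart of the argument is then the pointwise \emph{supporting-line} inequality
\begin{equation*}
f(p)\geq f(1)+f'(1)\,(p-1)\qquad\text{for all }p\in\R.
\end{equation*}
This is precisely where the hypothesis $R>1$ enters, and it is the step that a naive Jensen/convexity argument cannot supply: $f$ is \emph{not} convex on $(0,\infty)$ (indeed $f''(0)=-2B<0$), so the claim is genuinely that $p=1$ lies on the lower convex envelope of $f$.

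To prove the calibration I would set $g(p):=f(p)-f(1)-f'(1)(p-1)$; by construction $g(1)=g'(1)=0$, so $(p-1)^2$ divides $g$, and carrying out the division gives
\begin{equation*}
g(p)=(p-1)^2\,Q(p),\qquad Q(p)=A(p+1)^2+(2A-B),
\end{equation*}
where $2A-B=2R^4-2R=2R(R^3-1)>0$ exactly because $R>1$. Hence $Q(p)\geq2R(R^3-1)>0$ and $g(p)\geq0$, with equality only at $p=1$. Since $f$ is even, I apply the calibration to $|\phi'|$ and integrate:
\begin{equation*}
\Lambda(h)=\int_0^{2\pi} f(|\phi'|)\,d\theta\geq 2\pi f(1)+f'(1)\Big(\int_0^{2\pi}|\phi'|\,d\theta-2\pi\Big)=2\pi f(1)=\Lambda(h_R),
\end{equation*}
the last equality using the degree constraint. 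Therefore $h_R|_{\Sph^1}$ is a global minimizer of $\Lambda$ on $\diff^2(M,N)$; moreover equality forces $|\phi'|\equiv1$, so the minimizers are exactly $h_R$ followed by an isometry of $N$. The main obstacle is the calibration inequality itself: one must recognize that convexity fails and instead exhibit the global affine minorant at $p=1$, whose validity rests on the sign of $2A-B$ and hence on $R>1$.
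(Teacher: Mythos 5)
Your proof is correct, and it takes a genuinely different route from the paper's. The paper reduces $\Lambda$ to arc-length coordinates, splits it as $J_1(u)+J_2(u)$ with $\dot u = R\phi'$, invokes Lemma~4.1 of the cited one-dimensional paper to minimize $J_1$ over maps normalized by a basepoint condition $h(p)=q$, asserts that the same argument handles $J_2$, and then removes the basepoint normalization by composing with an isometry of $N$. You instead prove a single self-contained calibration: writing $\Lambda(h)=\int_0^{2\pi}f(\phi')\,d\theta$ with $f(p)=Ap^4-Bp^2+2$, you exhibit the global affine minorant $f(p)\ge f(1)+f'(1)(p-1)$ via the factorization $f(p)-f(1)-f'(1)(p-1)=(p-1)^2\bigl(A(p+1)^2+2A-B\bigr)$ and the sign $2A-B=2R(R^3-1)>0$, then integrate against the degree constraint $\int|\phi'|=2\pi$. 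This buys several things at once: it handles both fundamental-form terms simultaneously rather than term by term; it is independent of the external lemma; it makes the basepoint issue vanish because the integrand depends only on $\phi'$; it correctly isolates why $R>1$ is needed (the paper's hypothesis, in force throughout Section~3) where naive convexity fails since $f''(0)<0$; and it yields as a byproduct the characterization of all minimizers as $h_R$ followed by an isometry of $N$, which the lemma does not claim but which is consistent with the paper's Theorem~\ref{RiemSph}. Your computation of the local form of $\Lambda$ agrees with the paper's display~\eqref{eq:PhiI1I2} under $\dot u=R\phi'$, so the reduction step is sound.
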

\begin{proof}
Fix  $p \in M$ and $q\in N$, and let $\gamma:[0,L(M))\to M$ and $\xi:[0,L(N))\to N$ be the (positive orientation) arc length parametrizations of $M$ and $N$ respectively such that $\gamma(0)=p$ and 
$\xi(0)=q$. The distortion 
energy functional $\Lambda$ can be recast in the form
\begin{eqnarray}
\nonumber
\Lambda(u)&=&\int_0^{L(M)}(\dot{u}^2-1)^2\,dt+\int_0^{L(M)}(\frac{1}{R}\dot{u}^2-1)^2\,dt\\
\label{eq:PhiI1I2}
&=:&J_1(u)+J_2(u),
\end{eqnarray}
where $u=\xi^{-1}\circ h \circ \gamma:[0,L(M))\to [0,L(N))$ 
is a local coordinate representation of $h \in \diff(M,N)$ with $h(p)=q$. 
By lemma~4.1 in~\cite{OpusculaPaper}, the functions $u_1(t)=L(N)/L(M) t$ and 
$u_2(t)=-L(N)/L(M) t+L(N)$ minimize the functional $J_1$ in the admissible 
set \begin{eqnarray*}
\mathcal{B}=\big\{u \in C^2(0,L(M))\cap C([0,L(M)]): u \mbox{ is a
bijection onto } [0,L(N)]  \big\}.
\end{eqnarray*} 
The proof of the statement
that $u_1$ and $u_2$ minimize the functional $J_2$ in $\mathcal{B}$ follows 
along the same lines.

Therefore, the map $h_R|_{\Sph^1}$ minimizes the functional~\eqref{eq:Funct2}
over the set of all maps $h \in \diff^2(M,N)$ such that, for our fixed 
$p\in M$, $h(p)=R p$. 

If $h\in \diff^2(M,N)$ is such that $h(p)=q\neq R p$, consider an isometry
$f:N\to N$ such that $f(q)= R p$.
Because $f^\ast g_N=g_N$ and $f^\ast\II_N=\II_N$, we obtain
$\Lambda(h)=\Lambda(f\circ h)\geq \Lambda(h_R|_{\Sph^1})$, which proves the
lemma.
\end{proof}

As before, let $\psi^v$ denote the time-one map of the vector field $v
\in \SA^k_\AdmSetConst$ restricted to $M$.
Using lemma~\ref{cor:MinPhiIandII}, it is easy to construct a time-dependent vector 
field that minimizes the functional $E(v)=\Lambda(\psi^v)$ in the admissible set 
$\SA^k_\AdmSetConst$. In fact, every vector field $v^0 \in \SA^k_\AdmSetConst$ that 
generates the time-one map $\phi^v$ such that its restriction to $M$ is 
$\psi^v=h_R|_M$, minimizes the functional $E$. An example of such a vector 
field is $$v(x,t)=\rho(x)w(x,t)$$ for all $x$ in the open ball 
$\Omega:=B(0,R+2)\subset \R^2$ and $t \in [0,1]$, where 
$$w(x,t)=\frac{R-1}{1+(R-1)t}x$$ and $\rho:\R^2 \to \R$ is a bump function such 
that $0\leq\rho\leq 1$, $\rho\equiv 1$ in the open ball $B(0,R+1)\subset \R^2$,
 and $\rho\equiv0$ on $\Omega^c$. The vector field
$v$ generates the morph $F^v(p,t)=(1+(R-1)t)p$, whose time-one map restricted to
$M$ is $\psi^v(p)=Rp$.

\subsection{A Minimal Distortion Morph\\ between two Circles}
\label{ex:MinDistMorph}
Let us assume, as before, that $M=\Sph^1$, $N=\Sph^1_R$, and $R>1$.

In the previous subsection, we have constructed a minimizer of the functional 
E; the construction was quite straight-forward. 
The
time-integral involved in the definition of the functional $\SE$ makes 
the construction of its minimizer a much more intricate process.
 We will restrict our attention to morphs that operate through images that are
concentric circles, while leaving open the question whether a
minimizer must be purely radial, as the problem of constructing a minimal morph
within this family is difficult enough.  Note that
although our functional is formally defined in terms of time-dependent
vector fields, it is the resulting morphs we will be working with directly.

We will construct a minimal distortion morph between the  circles
$M=\Sph^1$ and $N=\Sph^1_R$ in case $R>1$. 
As before, let $\SM^{3,\text{ac}}(M,N)$ be the class of morphs between the manifolds $M$ and $N$ that are absolutely continuous in time and class $C^3$ in the spatial variable.
Recall that for a morph $F \in \SM^{3,\text{ac}}(M,N)$ we define 
$f^t=F(\cdot,t) \in \diff^3(M,M^t)$. We assume that the morph $F$ is generated
by a time-dependent vector field $v \in \SA^k_\AdmSetConst$.

Consider the functional $\Psi:\SM^{r,\text{ac}}(M,N) \to \R_+$, where $r\geq 1$, defined by
$$
\Psi(F)=\int_0^1\int_M \|(f^t)^\ast g_t-g_0\|^2\,\w_M \,dt+
\int_0^1\int_M \|(f^t)^\ast \II_t-\II_0\|^2\,\w_M \,dt,
$$
where $g_t$ and $\II_t$ are the first and the second fundamental forms on the
intermediate state $M^t$ induced by its isometric embedding into $\R^2$.
We notice that 
$\SE(v;1,1)=\Psi(F^v)$ for all $v\in\SA^k_\AdmSetConst$ 
(see definition~\ref{df:ESE}).

Fix a point $p\in M$. Let $\gamma$ be an arc-length parametrization of 
$M$ that induces the positive orientation on $M$ with $\gamma(0)=p$. 
Let $\xi^t$  be the arc length reparametrization of $M^t$ obtained from the 
parametrization $f^t\circ\gamma$ such that $\xi^t(0)=f^t\circ\gamma(p)$
and both $\xi^t$ and $f^t\circ\gamma$ induce the same orientation of $M^t$. Such a parametrization can be obtained by solving the equation $s(t,x)=y$ for $x$, 
where $s(t,x)=\int_0^x|f^t\circ\gamma(\tau)|\,d\tau$ is the arc length function of
the curve $M^t$. Using the implicit solution $x(t,y)$  of $s(t,x)=y$, we define 
$\xi^t(y)=f^t\circ\gamma\circ x(t,y)$. Because the morph $F$ is generated by a
time-dependent vector field $v \in \SA^k_\AdmSetConst$, lemma~\ref{lemma:Diff} implies that the function 
$t\mapsto D f^t(p)$, where $p\in M$, is absolutely continuous. It follows that 
the function $t\mapsto \xi^t(s)$ is continuous for every $s \in [0,L(M^t))$. 

The local representation of $f^t$ is given by $u^t(s)=(\xi^t)^{-1}\circ f^t \circ \gamma(s)$, where $s \in [0, 2\pi]$, and the energy $\Psi(F)$ of the morph $F$ is
\begin{eqnarray}
\nonumber
\Psi(F)&=&\int_0^1\int_0^{2\pi} 
\Big(\Big(\frac{du^t}{ds}\Big)^2-1\Big)^2
\,ds\,dt\\
&&{}
\label{eq:PsiLocal}
+
\int_0^1\int_0^{2\pi} 
\Big(\curvature_t(u^t)\Big(\frac{du^t}{ds}\Big)^2-1\Big)^2
\,ds\,dt,
\end{eqnarray}
where $\curvature_t:[0,L(M^t)]\to \R$ is the curvature function of the intermediate state $M^t$.

Let us restrict our attention to the morphs whose intermediate states are 
circles of increasing radii such that
each intermediate state $M^t$  of such  a morph $F$  is a circle of radius 
$\psi(t)$ with  $\psi \in C^2(0,1)\cap C[0,1]$  a (strictly) 
increasing function. In symbols, 
$$
\psi \in Q_+:=\{\phi \in C^2(0,1)\cap C[0,1]: \phi(0)=1, \phi(1)=R, 
\mbox{ and } \phi \mbox{ is increasing}\}.
$$
The curvature function 
of $M^t$ is given by $\curvature_t\equiv 1/{\psi(t)}$. By lemma~\ref{cor:MinPhiIandII}, the radial 
map between the circles $M$ and $M^t$ minimizes the functional 
$$f^t \mapsto \int_M \|(f^t)^\ast g_t-g_0\|^2\,\w_M+\int_M \|(f^t)^\ast
\II_t-\II_0\|^2\,\w_M.$$
Therefore, 
$$
\Psi(F)\geq \Psi(H)=2 \pi \int_0^1 (\psi^2-1)^2\,dt+2 \pi \int_0^1 (\psi-1)^2\,dt,
$$
where the morph $H\in \SM^{\infty,2}(M,N)$ is given by $H(p,t)=\psi(t) p$.

To determine the morph $H(p,t)=\psi(t) p$ of smallest distortion energy $\Psi(H)$, we 
will minimize the functional 
$J:L^4(0,1)\to \R_+$ defined by
\begin{equation}
\label{eq:JFunct}
J(\psi):= \int_0^1 (\psi^2-1)^2\,dt+\int_0^1 (\psi-1)^2\,dt
\end{equation}
over all admissible radius functions $\psi$.
To define the admissible set for the functional $J$, let 
us put this example into the context of time-dependent vector fields. 

Let $\Omega$ be the open ball of radius $R+2$ in $\R^2$.
Given a morph $H(p,t)=\psi(t) p$ (where $\psi \in Q_+$, $t \in [0,1]$, and 
$p \in M$), let us construct a time-dependent vector field 
$v \in {\SH^5}=L^2(0,1;W^{5,2}_0(\Omega;\R^2))$ that generates $H$, where the number of
weak derivatives $k=5$ is chosen in view of condition~(ii) of theorem~\ref{th123}.

Consider the class of morphs of the plane $\R^2$ that have the form
$F(x,t)=\psi(t)x$, where $\psi \in Q_+$. Define a time-dependent vector field 
$\bar{v}:\R^2\times [0,1] \to \R^2$ by 
$$\bar{v}(F(x,t),t)=\frac{\pd F}{\pd t}(t,x)$$ or, equivalently,
$$\bar{v}(x,t)=\frac{\psi'(t)}{\psi(t)}x.$$

Clearly, the morph $F$ satisfies the differential equation 
$dq/dt=\bar{v}(q,t)$. To obtain the required vector field $v$, 
multiply $\bar{v}$ by a bump function $\rho:\R^2 \to \R^2$ such that 
$\rho \equiv 1$ on the ball $B(0,R+1)$, $\rho\equiv 0$ on $\Omega^c$, 
and $0\leq \rho \leq 1$. The vector field
$$
v(x,t)=\frac{\psi'(t)}{\psi(t)}\rho(x)x
$$
belongs to the Hilbert space ${\SH^k}$ and generates the morph $$H(p,t):=\psi(t)
p=F|_{ M\times[0,1]}(p,t)$$ for all $(p,t)\in M\times [0,1]$.

In theorem~\ref{th123}, we require the admissible set $\SA^k_\AdmSetConst$, for some fixed $\AdmSetConst>0$,  to contain all vector fields $v\in {\SH^k}$ such that the norm of $v$ is bounded by  $\AdmSetConst$ and  $v$ generates a morph between the manifolds $M$ and $N$. 

Therefore, in addition to the assumption that $\psi \in Q_+$, we must assume 
that the time-dependent vector fields of the form  $v(x,t)=\frac{\psi'(t)}{\psi(t)}\rho(x)x$ are bounded in ${\SH^k}$ by a fixed constant $\AdmSetConst>0$. In symbols, the required bound is  
$$
\|v\|_{{\SH^k}}^2=\|\rho\cdot \mbox{id}_{\Omega}\|^2_{W^{5,2}_0(\Omega;\R^2)}
\int_0^1\Big(\frac{\psi'}{\psi}\Big)^2\,dt\leq \AdmSetConst^2.
$$

After introducing the constant 
\begin{equation}
\label{eq:DfA}
A:=\frac{\AdmSetConst^2}{\|\rho\cdot \mbox{id}_{\Omega}\|^2_{W^{5,2}_0(\Omega;\R^2)}},
\end{equation} we obtain the constraint 
\begin{equation}
\label{eq:GFunct}
G(\psi):=\int_0^1\Big(\frac{\psi'}{\psi}\Big)^2\,dt-A\leq 0.
\end{equation}

Note that the functional $J$ can be written in the form
$$
J(\psi)=\int_0^1 u(\psi)\,dt,
$$
where the smooth function $u(s)=(s^2-1)^2+(s-1)^2$ is strictly increasing  on
$(1,\infty)$.

To find a morph $H(p,t)=\psi(t)p$ with $\psi \in Q_+$,  which has minimal
distortion among the morphs $F \in \SM^{3,\text{ac}}(M,N)$ whose intermediate states
are circles with increasing radii, we must solve the optimization problem 
\begin{equation}
\label{OptPr1}
\begin{array}{l}
\mbox{minimize } J(\psi)\\
\mbox{for } \psi \in Q_+=\{\phi \in C^2(0,1)\cap C[0,1]:\\ \phi(0)=1,\, \phi(1)=R, 
\mbox{ and } \phi \mbox{ is increasing}\}\\
\mbox{subject to }
G(\phi)\leq 0.
\end{array}
\end{equation}

The solution of problem~\eqref{OptPr1} is obtained using the following outline: We will consider the related optimization problem 
\begin{equation}
\label{OptPr2}
\begin{array}{lll}
\mbox{minimize }  J(\psi), \\
\psi\in Q^{1,2}:=\{\phi \in W^{1,2}(0,1): \phi(0)=1, \phi(1)=R\}\\
\mbox{subject to } G(\phi)\leq 0, 
\end{array}
\end{equation}
where (because every function $\psi \in Q^{1,2}$ is absolutely continuous  on
$[0,1]$) the boundary conditions in the definition of the set $Q^{1,2}$
are to be understood in the classical sense.
We will determine the unique minimizer $\psi$ of the optimization problem~\eqref{OptPr2} and 
show that $\psi$ is an increasing $C^2$ function. 
Because $Q_+\subset Q^{1,2}$, the  same function $\psi$ is the unique solution of 
optimization problem~\eqref{OptPr1}.

\begin{lemma}
\label{lm:psiPr2Ex1R}
There exists a unique solution $\psi \in Q^{1,2}$ of the optimization
problem~\eqref{OptPr2}. Moreover,
$1\leq \psi(t)\leq R$ for all $t \in[0,1]$. 
\end{lemma}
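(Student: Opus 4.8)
The plan is to obtain existence by the direct method, the pointwise bound $1\le\psi\le R$ by a truncation argument, and uniqueness by passing to the logarithmic variable $w=\ln\psi$, in which the whole problem becomes convex. Throughout I assume $A\ge\log^2 R$, which is exactly the condition that the feasible set $\mathcal{K}:=\{\phi\in Q^{1,2}:G(\phi)\le 0\}$ is nonempty. Indeed the curve $\psi_0(t)=R^t$ has $\psi_0'/\psi_0\equiv\log R$, so $G(\psi_0)=\log^2 R-A\le 0$; conversely, writing $\int_0^1(\phi'/\phi)^2\,dt=\int_0^1((\ln\phi)')^2\,dt$ and noting that this is minimized, subject to $\phi(0)=1,\phi(1)=R$, by the linear choice $\ln\phi(t)=t\log R$, no feasible function exists when $A<\log^2 R$. (Functions in $Q^{1,2}$ are continuous by the one-dimensional Sobolev embedding, so the boundary conditions are classical, as the excerpt already observes.)

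The tool behind both existence and the bound is the truncation $\tilde\phi:=\max(1,\min(\phi,R))$. Since $u(s)=(s^2-1)^2+(s-1)^2$ has $u'(s)=4s^3-2s-2=2(s-1)(2s^2+2s+1)$, it attains its minimum $u(1)=0$ at $s=1$ and is strictly decreasing for $s<1$ and strictly increasing for $s>1$; hence $u(\tilde\phi)\le u(\phi)$ pointwise, strictly wherever $\phi\notin[1,R]$, so $J(\tilde\phi)\le J(\phi)$. Moreover $\tilde\phi\in W^{1,2}(0,1)$ with $\tilde\phi'=\phi'\,\mathbf{1}_{\{1<\phi<R\}}$ a.e. and $\tilde\phi\ge 1$, so $(\tilde\phi'/\tilde\phi)^2\le(\phi'/\phi)^2$ a.e. and $G(\tilde\phi)\le G(\phi)$; thus truncation preserves feasibility and does not increase $J$. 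Taking a minimizing sequence $\{\psi_n\}\subset\mathcal{K}$ and replacing each $\psi_n$ by $\tilde\psi_n$, I may assume $1\le\psi_n\le R$, whence $\int_0^1(\psi_n')^2\,dt=\int_0^1\psi_n^2(\psi_n'/\psi_n)^2\,dt\le R^2 A$ and $\{\psi_n\}$ is bounded in $W^{1,2}(0,1)$. Passing to a subsequence, $\psi_n\wto\psi$ in $W^{1,2}$ and, by the compact embedding $W^{1,2}(0,1)\hookrightarrow C[0,1]$, uniformly; uniform convergence preserves the boundary values and the bounds $1\le\psi\le R$, and gives $J(\psi_n)\to J(\psi)$ by continuity of $u$. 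For feasibility of the limit I use that $1/\psi_n\to 1/\psi$ uniformly while $\psi_n'\wto\psi'$ in $L^2$, so $\psi_n'/\psi_n\wto\psi'/\psi$ in $L^2$ (split $\int\phi\,\psi_n'/\psi_n-\int\phi\,\psi'/\psi$ as a strong-times-bounded term plus a weak-convergence term); weak lower semicontinuity of the $L^2$-norm then yields $\int_0^1(\psi'/\psi)^2\,dt\le\liminf_n\int_0^1(\psi_n'/\psi_n)^2\,dt\le A$, i.e. $\psi\in\mathcal{K}$. Hence $\psi$ is a minimizer. Applying the truncation inequality to any minimizer and invoking the strict monotonicity of $u$ forces $1\le\psi\le R$ everywhere, which is the asserted bound.

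The main obstacle is uniqueness, precisely because $J$ is independent of $\psi'$ while the constraint functional $\phi\mapsto\int_0^1(\phi'/\phi)^2\,dt$ is not convex in $\phi$ (its integrand $(\phi')^2\phi^{-2}$ has indefinite Hessian in $(\phi,\phi')$). I would dissolve this difficulty with the substitution $w=\ln\phi$, a bijection of $\{\phi\in W^{1,2}:1\le\phi\le R,\ \phi(0)=1,\ \phi(1)=R\}$ onto $\{w\in W^{1,2}:0\le w\le\log R,\ w(0)=0,\ w(1)=\log R\}$ that carries the constraint $G(\phi)\le 0$ to the convex constraint $\int_0^1(w')^2\,dt\le A$ and carries $J$ to $\tilde J(w):=\int_0^1 u(e^w)\,dt$. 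A direct computation gives $\frac{d^2}{dw^2}u(e^w)=16e^{4w}-4e^{2w}-2e^{w}=2e^{w}(8e^{3w}-2e^{w}-1)>0$ for $w\ge 0$, so $\tilde J$ is strictly convex on the convex feasible set. If $\psi_1\ne\psi_2$ were two minimizers, both with values in $[1,R]$ by the bound, their logarithms $w_1,w_2$ would be distinct feasible points; the midpoint $\tfrac12(w_1+w_2)$ remains feasible, since the boundary conditions are affine and $w\mapsto\int_0^1(w')^2\,dt$ is convex, and strict convexity would give $\tilde J(\tfrac12(w_1+w_2))<\tfrac12(\tilde J(w_1)+\tilde J(w_2))$, contradicting minimality. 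Therefore the minimizer is unique, completing the proof.
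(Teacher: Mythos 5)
Your proof is correct, and it departs from the paper's argument in three substantive ways. For existence, the paper first poses an auxiliary problem in $W^{1,4/3}(0,1)$ (because $J$ controls only $\|\psi\|_{L^4}$, and H\"older's inequality combined with $G(\psi)\le 0$ then bounds $\|\psi'\|_{L^{4/3}}$), extracts a weak limit there using weak lower semicontinuity of $J$ and $G$ via convexity of their integrands in $\psi'$, and only afterwards upgrades the minimizer to $W^{1,2}$ using the bound $\psi\le R$; you instead truncate the minimizing sequence at the outset, which immediately yields $\int_0^1(\psi_n')^2\,dt\le R^2A$ and lets you work in $W^{1,2}$ throughout, replacing the abstract lower semicontinuity theorem by a hands-on verification that $\psi_n'/\psi_n\rightharpoonup\psi'/\psi$ in $L^2$. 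Both routes are sound; yours is shorter and more self-contained, while the paper's postponement of the truncation is what forces the $W^{1,4/3}$ detour. The pointwise bounds $1\le\psi\le R$ are obtained by essentially the same cut-off functions $\max\{1,\psi\}$ and $\min\{R,\psi\}$ in both arguments. The real divergence is uniqueness: the paper disposes of it in one sentence, asserting that $J(\psi_1)=J(\psi_2)$ forces $u\circ\psi_1=u\circ\psi_2$ pointwise, which does not follow from equality of the integrals alone and in any case does not address whether the nonconvex constraint set $\{G\le 0\}$ admits a competitor interpolating between two minimizers. Your logarithmic substitution $w=\ln\psi$ resolves exactly this difficulty — it renders the constraint $\int_0^1(w')^2\,dt\le A$ convex and the objective $\int_0^1 u(e^w)\,dt$ strictly convex on $w\ge 0$ (your computation $\frac{d^2}{dw^2}u(e^w)=2e^w(8e^{3w}-2e^w-1)>0$ checks out), so the midpoint argument closes the gap cleanly. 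You have also made explicit the standing hypothesis $A\ge\log^2 R$ needed for the feasible set to be nonempty, which the paper leaves implicit in this lemma and only imposes later.
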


Lemma \ref{lm:psiPr2Ex1R} is proved using the direct method of the calculus of variations.
First, we prove the existence of a minimizer of the functional $J$ subject to
the constraint $G(\psi)\leq 0$ in the admissible
set $W^{1,4/3}(0,1)$ with the appropriate boundary conditions, and then we show that the minimizer is, in fact, in class
$W^{1,2}(0,1)$. The inequalities $1\leq \psi$ and $\psi\leq R$ are proved by contradiction using the  the cut-off functions
$h_1(t)=\max\{1,\psi(t)\}$ and $h_2(t)=\min\{R,\psi(t)\}$, which would yield smaller values of the functional $J$ than the minimizer.
 The details are given in Appendix~\ref{Append:C}.

\begin{lemma}
\label{lm:PsiNecCond}
If the constant $A$ in definition~\eqref{eq:GFunct} satisfies the inequality $A > (\log R)^2$ (see also equation~\eqref{eq:DfA})
and $\psi\in Q^{1,2}$ is the solution of the optimization problem~\eqref{OptPr2}, 
then there exists a constant $\lambda> 0$ such that
\begin{itemize}
\item[(i)] $\psi$ is a critical point of the functional 
$J+\lambda G$ over the space of variations $W^{1,2}_0(0,1)$, and
\item[(ii)] $G(\psi)=0$.
\end{itemize}
Moreover, the solution $\psi$ of the optimization problem~\eqref{OptPr2} is in
class $C^2(0,1)$.
\end{lemma}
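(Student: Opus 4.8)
The plan is to treat \eqref{OptPr2} as a constrained minimization of the $C^1$ functionals $J$ and $G$ on the affine space $Q^{1,2}$ (with tangent space $W^{1,2}_0(0,1)$) and to extract Karush--Kuhn--Tucker data from the Lagrange multiplier rule, after verifying that the constraint is active and that $\psi$ is a regular point. Throughout I use that $W^{1,2}(0,1)\hookrightarrow C[0,1]$, so every competitor is continuous, and that by lemma~\ref{lm:psiPr2Ex1R} the minimizer satisfies $1\le\psi\le R$; hence on a $W^{1,2}$-neighborhood of $\psi$ the competitors stay bounded above and bounded away from $0$, which makes both $J$ and $G$ of class $C^1$ there (their integrands are smooth functions of $(\psi,\psi')$ on the relevant range).

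First I would prove part~(ii). Suppose, for contradiction, that $G(\psi)<0$. Since $G$ is continuous on $W^{1,2}$, each $\psi+\ep h$ with $h\in W^{1,2}_0(0,1)$ and $\ep$ small is feasible; as $\psi$ is a minimizer, this forces $DJ(\psi)(h)=0$ for all such $h$. Writing $u(s)=(s^2-1)^2+(s-1)^2$, so that $u'(s)=2(s-1)(2s^2+2s+1)$, the first variation is $\int_0^1 u'(\psi)h\,dt$, and the fundamental lemma of the calculus of variations yields $u'(\psi)=0$ a.e. Because $2s^2+2s+1>0$ for every $s$, the only zero of $u'$ is $s=1$, so $\psi\equiv 1$ a.e.; continuity then gives $\psi(1)=1$, contradicting $\psi(1)=R>1$. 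Hence $G(\psi)=0$.

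For part~(i) the crux is the constraint qualification $DG(\psi)\ne 0$, and this is where the hypothesis $A>(\log R)^2$ enters. I introduce $w=\log\psi$, which lies in $W^{1,2}(0,1)$ because $1\le\psi\le R$, and note that $G(\psi)=\int_0^1(w')^2\,dt-A$, while a variation $h\in W^{1,2}_0$ of $\psi$ corresponds, at $\psi$, to the variation $k=h/\psi\in W^{1,2}_0$ of $w$, the map $h\mapsto h/\psi$ being a bounded bijection of $W^{1,2}_0$. Thus $DG(\psi)=0$ is equivalent to $\int_0^1 w'k'\,dt=0$ for all $k\in W^{1,2}_0$, which by the fundamental lemma forces $w'$ to equal a constant a.e.; together with the boundary values this gives $\psi=R^t$ and hence $G(\psi)=(\log R)^2-A<0$, contradicting the activeness just established. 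Therefore $DG(\psi)\ne 0$, $\psi$ is a regular point, and the Lagrange multiplier rule for $C^1$ functionals on Banach spaces furnishes $\lambda\in\R$ with $D(J+\lambda G)(\psi)=0$ on $W^{1,2}_0$ and $\lambda\ge 0$. Finally $\lambda\ne 0$, since $\lambda=0$ would give $DJ(\psi)=0$ and, exactly as in part~(ii), $\psi\equiv 1$; so $\lambda>0$.

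It remains to upgrade regularity to $C^2(0,1)$. By part~(i), $\psi$ satisfies the weak Euler--Lagrange equation of $J+\lambda G$, whose integrand is $L(\psi,\psi')=u(\psi)+\lambda(\psi'/\psi)^2$. Applying the du~Bois-Reymond lemma to $\int_0^1[L_{\psi'}h'+L_{\psi}h]\,dt=0$, with $L_{\psi'}=2\lambda\,\psi'/\psi^2\in L^2$ and $L_{\psi}=u'(\psi)-2\lambda(\psi')^2/\psi^3\in L^1$, shows that $2\lambda\,\psi'/\psi^2=c+\int_0^t L_{\psi}\,ds$ is absolutely continuous. Since $\lambda>0$ and $1\le\psi\le R$, this makes $\psi'=\psi^2\,(\psi'/\psi^2)$ continuous, hence $\psi\in C^1$; then the integrand on the right is continuous, so $\psi'/\psi^2\in C^1$ and $\psi\in C^2(0,1)$ (a further bootstrap in fact gives $C^\infty$). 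The main obstacle is the constraint qualification at merely $W^{1,2}$ regularity; the logarithmic change of variables, which turns $G$ into the Dirichlet energy of $w$ and thereby exposes the role of $A>(\log R)^2$, is what makes that step clean.
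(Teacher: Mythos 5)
Your proof is correct, and on the one genuinely delicate step---the constraint qualification $\delta G(\psi,\cdot)\not\equiv 0$---it takes a different and arguably cleaner route than the paper. The paper assumes $\delta G(\psi,h)=0$ for all $h$, invokes a regularity theorem for weak solutions of the Euler--Lagrange equation of $G$ to upgrade $\psi$ to $C^2$, integrates by parts to obtain $\psi''=(\psi')^2/\psi$, concludes $\psi(t)=R^t$, and then refutes minimality by exhibiting an explicit feasible competitor $h_\beta$ (constant, then linear, then $R^t$) with $G(h_\beta)\le 0$ and $J(h_\beta)<J(R^t)$; the hypothesis $A>(\log R)^2$ enters there, to make $h_\beta$ feasible for small $\beta$. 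You instead substitute $w=\log\psi$, which turns $G$ into the Dirichlet energy of $w$ and turns the vanishing of $\delta G(\psi,\cdot)$ into $\int_0^1 w'k'\,dt=0$ for all $k\in W^{1,2}_0(0,1)$; du Bois--Reymond then gives $w'$ constant at the $W^{1,2}$ level, hence $\psi=R^t$ and $G(\psi)=(\log R)^2-A<0$, contradicting the activeness you established beforehand. This avoids both the regularity bootstrap inside the contradiction and the competitor construction, and makes the role of the hypothesis $A>(\log R)^2$ transparent. Your ordering also differs slightly: you prove (ii) first and directly, from the observation that $J$ contains no $\psi'$, so $DJ(\psi)=0$ would force $u'(\psi)\equiv 0$, i.e.\ $\psi\equiv 1$; the paper obtains (ii) from complementary slackness in the generalized Kuhn--Tucker theorem, implicitly relying on the same observation to rule out $\lambda=0$. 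For the final $C^2$ regularity you carry out the du Bois--Reymond bootstrap by hand where the paper cites Jost's regularity theorem for Lagrangians with positive second derivative in $\psi'$; the two arguments are equivalent.
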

Lemma \ref{lm:PsiNecCond} follows from the Generalized Kuhn-Tucker theorem 
(see theorem~1, Sec. 9.4 in~\cite{Luenberger}) and a regularity result for weak solutions of
Euler-Lagrange equations (see theorem~1.2.3, Sec.~1.2 in~\cite{JostCV}). The proof of the 
lemma is sketched in Appendix~\ref{Append:C} for completeness.

\begin{theorem}
If the constant $A$ in definition~\eqref{eq:GFunct} satisfies the inequality $A > (\log R)^2$,
then there exists a unique function $\psi \in C^2(0,1)\cap Q^{1,2}$ satisfying conditions~(i) and~(ii)
of lemma~\ref{lm:PsiNecCond} and the following properties.\\
(iii) The function $\psi$ is strictly increasing and
solves the initial value problem
\begin{equation}
\label{eq:psiIVP}
\left\{
\begin{array}{ll}
\psi'=\frac{1}{\sqrt{\la}}\psi \sqrt{\mu+(\psi^2-1)^2+(\psi-1)^2},\\
\psi(0)=1,
\end{array}
\right.
\end{equation}
where the pair of positive constants $\la$ and $\mu$ is the unique solution 
 of the system of equations
\begin{equation}
\label{eqHyp:CondLaE1}
\int_1^R \frac{ds}{s \sqrt{\mu+(s^2-1)^2+(s-1)^2}}=\frac{1}{\sqrt{\la}}
\end{equation}
and
\begin{equation}
\label{eqHyp:CondLaE2}
\frac{1}{\sqrt{\la}}\int_1^R \frac{\sqrt{\mu+(s^2-1)^2+(s-1)^2}}{s}\,ds=A.
\end{equation}
(iv)  The function $\psi$ is the unique solution
of the optimization problem~\eqref{OptPr1}.
\end{theorem}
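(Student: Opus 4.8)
The plan is to take the minimizer $\psi$ produced by Lemmas~\ref{lm:psiPr2Ex1R} and~\ref{lm:PsiNecCond}, convert the Euler--Lagrange condition into a first-order autonomous ODE, use the boundary data together with the active constraint $G(\psi)=0$ to pin down the two constants $\la$ and $\mu$, and finally transport uniqueness and optimality back to problem~\eqref{OptPr1}. By Lemma~\ref{lm:psiPr2Ex1R} a minimizer $\psi\in Q^{1,2}$ of~\eqref{OptPr2} exists, is unique, and satisfies $1\le\psi\le R$; by Lemma~\ref{lm:PsiNecCond}, since $A>(\log R)^2$, it lies in $C^2(0,1)$, is a critical point of $J+\la G$ for some $\la>0$, and satisfies $G(\psi)=0$. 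Exploiting that the Lagrangian density $u(\psi)+\la(\psi'/\psi)^2$ (with $u(s)=(s^2-1)^2+(s-1)^2$) has no explicit $t$-dependence, the Beltrami identity gives the first integral
\begin{equation}
\label{eq:FirstInt}
u(\psi)-\la\frac{(\psi')^2}{\psi^2}=-\mu
\end{equation}
for a constant $\mu$, valid on $(0,1)$; equivalently $\la(\psi')^2/\psi^2=\mu+u(\psi)$.

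Next I would show $\mu>0$ and that $\psi$ is strictly increasing. Letting $t\to 0^+$ in~\eqref{eq:FirstInt} and using $\psi(t)\to 1$, $u(1)=0$, and $\psi>0$ shows that $\la(\psi')^2\to\mu$, so $\mu\ge 0$ as a limit of nonnegative quantities. To exclude $\mu=0$, note that $\mu=0$ forces $|\psi'|=\la^{-1/2}\psi\sqrt{u(\psi)}$; since $\sqrt{u(s)}=(s-1)\sqrt{(s+1)^2+1}$ for $s\ge 1$, the right-hand side is bounded by $C(\psi-1)$ on $1\le\psi\le R$, so $\phi:=\psi-1\ge 0$ obeys $|\phi'|\le C\phi$ with $\phi(0)=0$; a Gr\"onwall argument then gives $\phi\equiv 0$, i.e. $\psi\equiv 1$, contradicting $\psi(1)=R$. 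Hence $\mu>0$, and then $\mu+u(\psi)\ge\mu>0$ makes $\psi'$ nonvanishing of constant sign on $(0,1)$; since $\psi(1)=R>1=\psi(0)$ we get $\psi'>0$ and the increasing initial value problem~\eqref{eq:psiIVP}.

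I would then determine $\la$ and $\mu$. Separating variables in~\eqref{eq:psiIVP} and integrating over $[0,1]$ produces~\eqref{eqHyp:CondLaE1}; the active constraint $G(\psi)=0$ reads $\int_0^1(\psi'/\psi)^2\,dt=A$, and substituting $s=\psi(t)$ together with $\psi'/\psi=\la^{-1/2}\sqrt{\mu+u(\psi)}$ turns its left-hand side into $\la^{-1/2}\int_1^R s^{-1}\sqrt{\mu+u(s)}\,ds$, which is~\eqref{eqHyp:CondLaE2}. Eliminating $\la$ via~\eqref{eqHyp:CondLaE1} reduces the system to $\Theta(\mu)=A$, where
\begin{equation}
\label{eq:ThetaDef}
\Theta(\mu)=\Big(\int_1^R\frac{ds}{s\sqrt{\mu+u(s)}}\Big)\Big(\int_1^R\frac{\sqrt{\mu+u(s)}}{s}\,ds\Big).
\end{equation}
The crux is that $\Theta$ is strictly decreasing: writing $\Theta=ab$ for the two factors and $c=\int_1^R s^{-1}(\mu+u)^{-3/2}\,ds$, one computes $\Theta'=\tfrac12(a^2-bc)$, and the Cauchy--Schwarz inequality applied to the pair $s^{-1/2}(\mu+u)^{1/4}$, $s^{-1/2}(\mu+u)^{-3/4}$ gives the strict bound $a^2<bc$ (strict because $u$ is nonconstant). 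Combined with $\Theta(\mu)\to+\infty$ as $\mu\to 0^+$ (the first factor diverges logarithmically near $s=1$ while the second stays finite) and $\Theta(\mu)\to(\log R)^2$ as $\mu\to\infty$, monotonicity shows $\Theta(\mu)=A$ has a unique root $\mu>0$ exactly when $A>(\log R)^2$, after which $\la$ is recovered uniquely from~\eqref{eqHyp:CondLaE1}.

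Finally, for uniqueness of $\psi$ and part~(iv): since $\mu>0$ the right-hand side of~\eqref{eq:psiIVP} is $C^1$ in $\psi$ near $[1,R]$, so the initial value problem has a unique solution, which by~\eqref{eqHyp:CondLaE1} reaches $R$ exactly at $t=1$ and by~\eqref{eqHyp:CondLaE2} meets the constraint; this identifies $\psi$ uniquely and shows $\psi\in Q_+\subset Q^{1,2}$. For~(iv), because $Q_+\subset Q^{1,2}$ the constrained infimum of $J$ over $Q_+$ is at least that over $Q^{1,2}$; as $\psi$ attains the latter and lies in $Q_+$, it solves~\eqref{OptPr1}, and any other $Q_+$-minimizer would also minimize over $Q^{1,2}$ and hence coincide with $\psi$ by Lemma~\ref{lm:psiPr2Ex1R}. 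I expect the main obstacle to be the third paragraph, namely proving that $\Theta(\mu)=A$ has exactly one positive root: the strict Cauchy--Schwarz estimate $a^2<bc$ driving the monotonicity of $\Theta$, together with the exclusion of the degenerate case $\mu=0$, are the delicate points.
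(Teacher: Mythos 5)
Your argument is correct and follows the same overall strategy as the paper: obtain the first-order equation from a conserved quantity along extremals (your Beltrami identity is exactly the paper's statement that the Hamiltonian $H(\psi,p)$ is constant, with the same sign convention for $\mu$), derive the two integral relations from separation of variables and from the active constraint $G(\psi)=0$, reduce to a single scalar equation in $\mu$, and prove unique solvability by strict monotonicity together with the limits $+\infty$ and $\log^2 R$. The one place where you genuinely diverge from the paper is the monotonicity of $f(\mu)=\Theta(\mu)$: the paper rewrites the product of integrals as a double integral over $[0,u(R)]^2$ and symmetrizes in $(s,t)$ to show $f'(\mu)<0$, whereas you differentiate the two factors separately and invoke the Cauchy--Schwarz inequality on the pair $s^{-1/2}(\mu+u)^{1/4}$, $s^{-1/2}(\mu+u)^{-3/4}$, with strictness because $u$ is nonconstant; the two devices are equivalent in substance, but yours is shorter and avoids the change of variables $t=u(s)$ and the weight $H(t)=u^{-1}(t)u'(u^{-1}(t))$. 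You also add a Gr\"onwall argument to rule out $\mu=0$ directly from the ODE, which the paper only gets implicitly from $\lim_{\mu\to 0^+}f(\mu)=+\infty$; that is a harmless and slightly more self-contained touch. The final step --- transporting uniqueness and optimality from~\eqref{OptPr2} to~\eqref{OptPr1} via $Q_+\subset Q^{1,2}$ and lemma~\ref{lm:psiPr2Ex1R} --- matches the paper's outline exactly.
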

\begin{proof}
If $\psi \in Z:=C^2(0,1)\cap Q^{1,2}$ is a critical point of the functional 
$J_\lambda:=J+\lambda G:W^{1,2}(0,1) \to \R_+$,  then $\psi$ satisfies the Euler-Lagrange equation for $J_\lambda$, which is equivalent 
to the Hamiltonian system
$$
\left\{
\begin{array}{ll}
\psi'=\frac{\pd H}{\pd p}(\psi,p),\\
p'=-\frac{\pd H}{\pd\psi}(\psi,p)
\end{array}
\right.
$$
with the Hamiltonian $H(\psi,p)=p \psi'-L(\psi,\psi')$, where 
$$L(\psi,\psi')=(\psi^2-1)^2+(\psi-1)^2+\la \Big(\frac{\psi'}{\psi}\Big)^2$$ is the integrand of $J_\la$ and 
$p:=\frac{\pd L}{\pd \psi'}(\psi,\psi')$ (see, for example, ~\cite{Chicone}). Moreover, the Hamiltonian $H(\psi,p)$ is constant along the solutions of the Euler-Lagrange
equation for $J_\la$. Let us denote this constant by $\mu$.

It is easy to see that $$p=2 \la \frac{\psi'}{\psi^2}$$ and the Hamiltonian is given by
$$
H(\psi,p)=\frac{1}{4 \la}p^2 \psi^2-(\psi^2-1)^2-(\psi-1)^2.
$$

Note that the equation $\psi'=\frac{\pd H}{\pd p}(\psi,p)$ yields  $\psi'=\frac{1}{2 \la}p \psi^2$. By solving the Hamiltonian energy equation
\begin{equation}
\label{eq:ppsiE}
\frac{1}{4 \la}p^2 \psi^2-(\psi^2-1)^2-(\psi-1)^2=\mu
\end{equation}
for $p$ and substituting, we obtain a first-order differential equation for $\psi$:
\begin{equation}
\label{eq:psiHamilt}
\psi'=\frac{1}{\sqrt{\la}}\psi \sqrt{\mu+(\psi^2-1)^2+(\psi-1)^2}\,.
\end{equation}
The case with the negative square root is eliminated because the
conditions $\psi(0)=1$ and $\psi(1)=R>1$ can be used to show  that the derivative of 
$\psi$  is non negative on $(0,1)$. 

In view of equation~\eqref{eq:ppsiE},  it is easy to see that 
$\mu+(\psi^2-1)^2+(\psi-1)^2\geq 0$ for all $\psi \in Z$. 
Because $\psi(0)=1$, we have $\mu\geq 0$. Also, it follows immediately from equation~\eqref{eq:psiHamilt} that $\psi$
is an increasing function. 

Let us use the notation $u(s)=(s^2-1)^2+(s-1)^2$ and recall that $u$ is a strictly increasing function on $(1,\infty)$. After integrating both sides of equation~\eqref{eq:psiHamilt} over the interval $0\le t\le 1$ and making the substitution $s=\psi(t)$, we obtain the relation
\begin{equation}
\label{eq:CondLaE1}
\int_1^R \frac{ds}{s \sqrt{\mu+u(s)}}=\frac{1}{\sqrt{\la}}.
\end{equation}

Another relation of $\la$ and $\mu$ is obtained from condition~(ii) in
lemma~\ref{lm:PsiNecCond} (see  equation~\eqref{eq:GFunct} for the definition of $G$). The integrand in the definition of $G$ contains the quantity $(\psi')^2$, which we view as $\psi' \psi'$. We substitute the right-hand side of 
equation~\eqref{eq:psiHamilt} for one factor $\psi'$ of this square
and leave the other  factor $\psi'$ in the resulting integrand.  
After making the change of variables $s=\psi(t)$, we obtain the equivalent relation
\begin{equation}
\label{eq:CondLaE2}
\frac{1}{\sqrt{\la}}\int_1^R \frac{\sqrt{\mu+u(s)}}{s}\,ds=A.
\end{equation}

We claim that  there exists a unique solution $(\mu,\la)$ of the
equations~\eqref{eq:CondLaE1} and~\eqref{eq:CondLaE2}. To prove this, 
 substitute for  $1/\sqrt{\la}$ from
equation~\eqref{eq:CondLaE1} into equation~\eqref{eq:CondLaE2} to obtain the
equation
\begin{eqnarray}
\label{eq:ProdInt}
A=f(\mu)&:=&\int_1^R \frac{\sqrt{\mu+u(s)}}{s} \,ds
\int_1^R \frac{1}{s\sqrt{\mu+u(s)}} \,ds.
\end{eqnarray}

Make the change of  variables $t=u(s)$ in both integrals in display~\eqref{eq:ProdInt} and then write $f(\mu)$ as a double integral to
obtain the formula
\begin{equation}
\label{eq:DoubleInt}
 f(\mu)=\int_0^{u(R)}\int_0^{u(R)}\frac{\sqrt{\mu+t}}{\sqrt{\mu+s}}\frac{1}{H(t)
H(s)}ds\,dt,
\end{equation}
where $H(t):=u^{-1}(t)u'(u^{-1}(t))\geq 0$ for all $t \in [0,u(R)]$.

By inspection of equation~\eqref{eq:ProdInt}, it is easy to see that $\lim_{\mu\to
0+}f(\mu)=+\infty$ and $\lim_{\mu \to \infty}f(\mu)=\log^2(R)$. We will show that $f$
is a decreasing function, which guarantees the existence of a unique solution of
the equation $f(\mu)=A$ for all $A>\log^2(R)$.

Using formula~\eqref{eq:DoubleInt}, we compute
$$
f'(\mu)=\frac{1}{2}\int_0^{u(R)}\int_0^{u(R)}\frac{s-t}{(\mu+s)^{3/2}(\mu+t)^{1/2}}\frac{1}{H(t) H(s)}ds\,dt.
$$
Let $D_+=\{(s,t)\in[0,u(R)]^2:s>t\}$ and $D_-=\{(s,t)\in[0,u(R)]^2:s<t\}$. After making a change of variables $\gamma(s,t)=(t,s)$, we see that
\begin{eqnarray*}
\lefteqn{{\int\!\!\int}_{D_+}\frac{s-t}{(\mu+s)^{3/2}(\mu+t)^{1/2}}\frac{1}{H(t) H(s)}ds\,dt=}\hspace{1in}\\
&&{\int\!\!\int}_{D_-}\frac{t-s}{(\mu+t)^{3/2}(\mu+s)^{1/2}}\frac{1}{H(t) H(s)}ds\,dt.
\end{eqnarray*}

Therefore,
\begin{eqnarray*}
2f'(\mu)&=&{\int\!\!\int}_{D_+\cup D_-}\frac{s-t}{(\mu+s)^{3/2}(\mu+t)^{1/2}}\frac{1}{H(t) H(s)}ds\,dt\\
\\
&=&-
{\int\!\!\int}_{D_-}\frac{(s-t)^2}{(\mu+s)^{3/2}(\mu+t)^{3/2}}\frac{1}{H(t) H(s)}ds\,dt<0.
\end{eqnarray*}
This completes the  proof that $f$ is a decreasing function.

There exists a unique solution $\mu$ of the equation $f(\mu)=A$ provided that $A>\log^2(R)$. The constant $\la$ is then easily found from equation~\eqref{eq:CondLaE1}.

Having found the unique solution $(\mu,\la)$ of the system~\eqref{eq:CondLaE1} and~\eqref{eq:CondLaE2}, we solve the initial 
value problem~\eqref{eq:psiIVP}. 
In fact, this initial value 
problem is equivalent to the integral equation
\begin{equation}
\label{eq:DEpsiEquiv}
\int_1^{\psi(t)}\frac{ds}{s\sqrt{\mu+u(s)}}=\frac{1}{\sqrt{\la}}t.
\end{equation}
It follows that the unique solution
$\psi$ of the
initial value problem~\eqref{eq:psiIVP} exists for all $t\in[0,1]$ and,
because of condition~\eqref{eq:CondLaE1}, satisfies
$\psi(1)=R$.
\end{proof}

Figs.~\ref{fig2} and~\ref{fig3} depict  graphs of the minimizer $\psi$ of the optimization
problem~\eqref{OptPr1} with $R=2$ and $A=f(\mu)$ in case $\mu=0.001$ for 
Fig.~\ref{fig2} and $\mu=500$ for 
Fig.~\ref{fig3}. Because $f$ is a decreasing function of $\mu$,
Fig.~\ref{fig2} corresponds to a larger constant $A$. These plots illustrate 
that second derivative of the radius function $\psi$ corresponding to 
the minimal morph increases as the constant $A$ in definition~\eqref{eq:GFunct} increases.

\begin{figure}
\begin{center}
\includegraphics[scale=1]{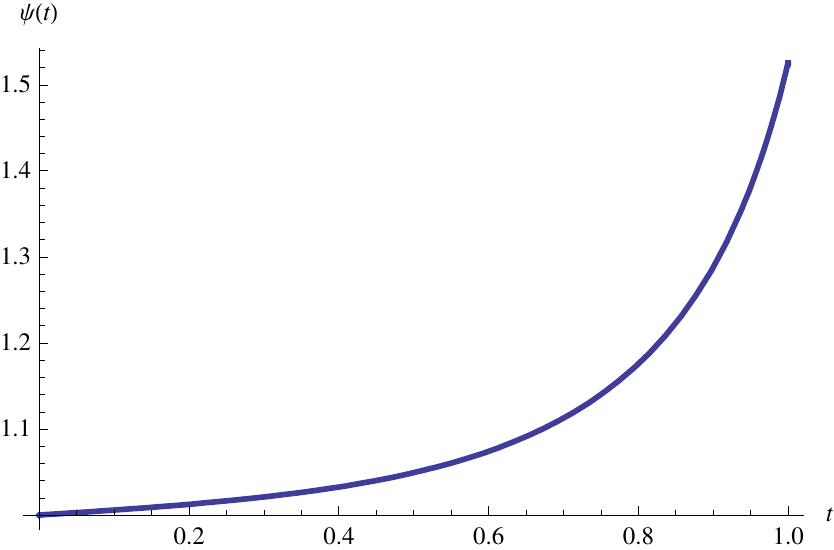}
\caption{Graph of the radius function $\psi$ with $R=2$, $\mu=0.001$, $\la=0.306067$, and $A=1.56296$.}
\label{fig2}
\end{center}
\end{figure}

\begin{figure}
\begin{center}
\includegraphics[scale=1]{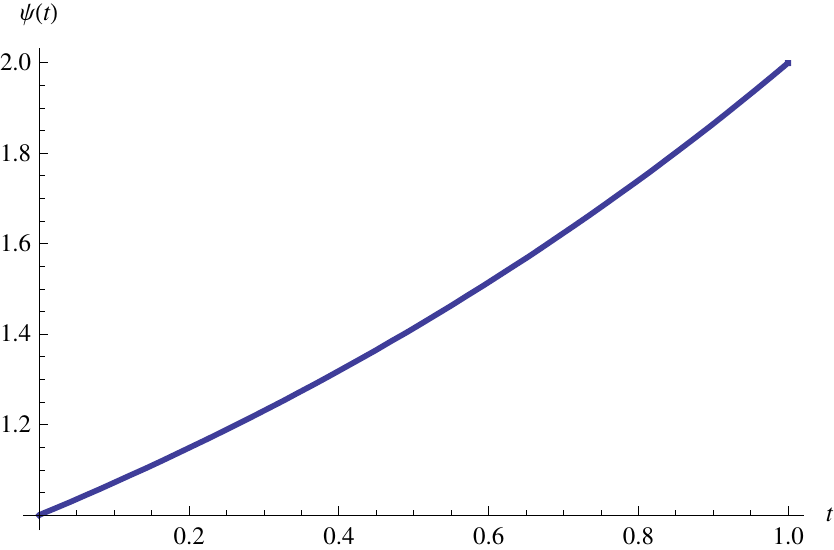}
\caption{Graph of the radius function $\psi$ with $R=2$, $\mu=500$, $\la=1045.58$, and $A=0.480456$.}
\label{fig3}
\end{center}
\end{figure}

\section{ Minimal Deformation Bending\\ of Two-Dimensional Spheres}
\label{Ch2:2dimSphere}
In this section we minimize the deformation energy functional $\Phi$ defined in
display~\eqref{eq:Funct1} under the assumptions
 $M=\Sph^2$ and $N=h_R(\Sph^2)=:R\,\Sph^2$ for some $R>0$, 
where $\Sph^2$ is the unit $2$-dimensional sphere in $\R^3$,
and $h_R$ is the radial map defined by $h_R(y)=R\,y$ for all $y \in \R^3$. 
As usual, the manifolds $M$ and $N$ are equipped with Riemannian
metrics
$g_M$ and $g_N$ respectively induced by the Euclidean metric $dx_1^2+dx_2^2+dx_3^2$ of $\R^3$.
The manifolds $M$ and $N$ are Riemann surfaces (see~\cite{Olson,Jost}).
We parametrize the spheres $\Sph^2$ and $R\, \Sph^2$ on the extended complex plane $\hat{\C}=\C \cup \infty$ using stereographic
projections. For $(y_1, y_2, y_3) \in \Sph^2$, the stereographic projection is 
given by
the expression $\pi(y_1,y_2,y_3)=\frac{y_1+ i y_2}{1-y_3}$. 
 We will show that maps of the form $h=f \circ h_R|_M$, 
where  $f$ is an isometry on $N$,
 minimize the functional $\Phi$ defined by equation~\eqref{eq:Funct1} in the class of all holomorphic diffeomorphisms 
from  $M$ to $N$.

We note that the holomorphic minimizers are critical points of the functional $\Phi$ on its 
natural domain $\diff(M,N)$ (see appendix~\ref{App:PhiEL} and corollary~\ref{Cor:RadialCrPt}).  

The parametrization $\phi:\hat{\C}\to\Sph^{2}$ is given by 
\begin{equation}
\label{phiS2}
\phi(u+ i v)=\bigg(\frac{2 u}{1+u^2+v^2}, \frac{2 v}{1+u^2+v^2}, 
\frac{-1+u^2+v^2}{1+u^2+v^2}\bigg)^T,
\end{equation}
and the 
parametrization $\phi_{R}:\hat{\C}\to R\,\Sph^{2}$ of $R\,\Sph^{2}$ is given by $\phi_{R}(u+i v)=R \phi(u+ i v)$.

In these coordinates, the Riemannian metrics $g_M$ and $g_N$ are defined by  
\begin{equation}
g_M(z, \bar{z})=\frac{4}{(1+|z|^2)^2} dzd\bar{z}
\end{equation}
and 
\begin{equation}
g_N(z, \bar{z})=\frac{4 R^2}{(1 +|z|^2)^2} dzd\bar{z}.
\end{equation}

Let $h \in \diff(M,N)$ be a holomorphic map. The local representation $(\phi_R)^{-1}\circ h\circ \phi:\hat{\C}\to\hat{\C}$ 
of $h$, which (by an abuse of notation) we shall denote by the same letter, is a holomorphic diffeomorphism of the extended complex plane onto itself. We
conclude that $h(z)$ has
the form $h(z)= M(z)$, where $M(z)=\frac{a z + b}{c z + d}$
 is a M\"{o}bius transformation and $a,b,c,d \in \C$ are such that $ad-bc \neq 0$. For such an $h$, it is easy to derive the formula 
\begin{equation}
h^\ast g_N(z,\bar{z})=\frac{4R^2 |bc-ad|^2}{\big(|a z +
b|^2+|c z +d|^2\big)^2}dzd\bar{z}.
\end{equation}
Hence, the problem of minimization of the deformation energy functional $\Phi$ defined in display~\eqref{eq:Funct1}
over all holomorphic diffeomorphisms from $\Sph^2$ to $R\, \Sph^2$
reduces to the problem of minimization of the function
\begin{equation}
\Psi(a,b,c,d)=\int_{\R^2}\Big(
\frac{R^2 |bc-ad|^2}
{\big(|a z +b|^2+|c z +d|^2\big)^2}-\frac{1}{(1+|z|^2)^2}\Big)^2(1+|z|^2)^2 \,du dv,
\end{equation}
where $z=u+i v$, over the 
 group $\Aut(\hat{\C})=\PGL(2,\C)$.
Recall that the elements of the projective general linear group $\PGL(2,\C)$ are the equivalence classes $[a,b,c,d]$, where $ad-bc\neq 0$ and
$(a',b',c',d')\in [a,b,c,d]$ if  $(a',b',c',d')=\lambda(a,b,c,d)$ for some $\lambda \in \C \backslash\{0\}$.

Recall that the group of all isometries of the Riemann sphere is the projective unitary group $\PU(2,\C)$; that is, every isometry $f$ of
$(\Sph^2,g_M)$ has the local representation  (via stereographic projection) 
$$f(z)=\frac{az-\bar{c}}{cz+\bar{a}},$$
where $a,c\in \C$ are such that $|a|^2+|c|^2=1$.

The functional $\Phi$ is invariant with respect to left compositions with isometries; that is, $\Phi(f \circ h)=\Phi(h)$
for every isometry $f \in \diff(N)$ and $h \in \diff(M,N)$.
Therefore, the reduced function $\Psi$ is well-defined on the quotient of 
$\PGL(2,\C)$ by $\PU(2,\C)$, which 
is the set of all equivalence classes
\begin{equation}
\bigg[\bigg[
\begin{array}{cc}
\al & \be\\
\gamma& \delta
\end{array}
\bigg]\bigg]=
\Bigg\{
\bigg(
\begin{array}{cc}
a & -\bar{c}\\
c& \bar{a}
\end{array}
\bigg)
\bigg(
\begin{array}{cc}
\al & \be\\
\gamma& \delta
\end{array}
\bigg): \bigg(
\begin{array}{cc}
a & -\bar{c}\\
c& \bar{a}
\end{array}
\bigg) \in \PU(2,\C)
\Bigg\}.
\end{equation} 

We note that the equivalence class 
$$
\bigg[\bigg[
\begin{array}{cc}
1 & 0\\
0& 1
\end{array}
\bigg]\bigg]
$$
consists of all the isometries of the unit sphere $(\Sph^2,g_M)$.

\noindent {\bf Proof of theorem~\ref{RiemSph}}

Statement (ii) of theorem~\ref{RiemSph}
follows immediately from Hurwitz's automorphisms theorem: \emph{The group of automorphisms of a compact
Riemann surface of genus greater than one is finite} (see~\cite{Olson}).

Statement (i) of theorem~\ref{RiemSph} is equivalent to the following result.
\begin{theorem}
\label{th:ShpMin}
The equivalence class of the
isometries of $(\Sph^2,g_M)$ is the unique minimizer of
the function $\Psi$  defined on the homogeneous space\\
$\PGL(2,\C)/\PU(2,\C)$; that is,
\begin{equation}
\Psi\Bigg(
\bigg[\bigg[
\begin{array}{cc}
1 & 0\\
0& 1
\end{array}
\bigg]\bigg]
\Bigg)
\leq \Psi\Bigg(
\bigg[\bigg[
\begin{array}{cc}
a & b\\
c& d
\end{array}
\bigg]\bigg]
\Bigg)
\end{equation}
for all $$\bigg[\bigg[
\begin{array}{cc}
a & b\\
c& d
\end{array}
\bigg]\bigg] \in \PGL(2,\C)/\PU(2,\C).$$
\end{theorem}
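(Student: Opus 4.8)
The plan is to use the large symmetry group of the problem to collapse the minimization over the three–dimensional quotient $\PGL(2,\C)/\PU(2,\C)$ to a one–parameter computation, and then to recognize the resulting inequality as an instance of the arithmetic–geometric mean inequality. First I would record that $\Phi$ is invariant not only under left composition with isometries of the target (as already noted in the text), but also under right composition with isometries of the domain: if $f$ is an isometry of $(\Sph^2,g_M)$, then $f^\ast g_M=g_M$ and $f^\ast\omega_M=\omega_M$, so the change of variables $p\mapsto f(p)$ gives $\Phi(h\circ f)=\Phi(h)$. Since composition $h\circ f$ corresponds to the matrix product $M_h M_f$, this says precisely that the reduced function $\Psi$ is constant on the double cosets $\PU(2,\C)\backslash\PGL(2,\C)/\PU(2,\C)$. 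By the Cartan ($KAK$) decomposition of $\PGL(2,\C)\cong\PSL(2,\C)$ relative to its maximal compact subgroup $\PU(2,\C)$ — equivalently, the geometric fact that every conformal automorphism of $\hat{\C}$ is a rotation composed with a pure dilation composed with a rotation — every class has a representative of the form $h_\mu(z)=\mu z$ with $\mu\ge 1$ real, i.e. $[a,b,c,d]=[\mu,0,0,1]$. Hence it suffices to evaluate $\Psi$ on this diagonal family and to minimize the resulting function of the single variable $\mu\in[1,\infty)$.

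Next I would compute $\Psi(h_\mu)$ in polar coordinates $z=re^{i\theta}$. For $h_\mu$ one has $|bc-ad|^2=\mu^2$ and $|az+b|^2+|cz+d|^2=\mu^2 r^2+1$, so expanding the square in the integrand produces three terms. The cross term and the pure $g_M$ term are, pleasantly, independent of $\mu$: the substitution $s=\mu^2 r^2$ gives $\int_0^\infty \frac{2R^2\mu^2\,r\,dr}{(\mu^2 r^2+1)^2}=R^2$ and $\int_0^\infty \frac{r\,dr}{(1+r^2)^2}=\tfrac12$, contributing $-2\pi R^2$ and $\pi$ respectively after the angular integration. Only the first term depends on $\mu$; the same substitution together with $\int_0^\infty(s+1)^{-4}ds=\tfrac13$, $\int_0^\infty s(s+1)^{-4}ds=\tfrac16$, and $\int_0^\infty s^2(s+1)^{-4}ds=\tfrac13$ yields
\[
2\pi R^4\mu^4\int_0^\infty\frac{(1+r^2)^2\,r}{(\mu^2 r^2+1)^4}\,dr=\frac{\pi R^4}{3}\Big(\mu^2+1+\frac{1}{\mu^2}\Big).
\]
Collecting the three contributions gives the closed form
\[
\Psi(h_\mu)=\frac{\pi R^4}{3}\Big(\mu^2+\frac{1}{\mu^2}+1\Big)-2\pi R^2+\pi.
\]

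Finally, the isometry class $\big[[\,\mathrm{id}\,]\big]$ corresponds to $\mu=1$, since in these coordinates $h_R|_M$ is the identity Möbius transformation and satisfies $h_R^\ast g_N=R^2 g_M$. Subtracting gives
\[
\Psi(h_\mu)-\Psi(h_1)=\frac{\pi R^4}{3}\Big(\mu^2+\frac{1}{\mu^2}-2\Big)=\frac{\pi R^4}{3}\Big(\mu-\frac{1}{\mu}\Big)^2\ge 0,
\]
with equality if and only if $\mu=1$. Thus $\Psi$ attains its minimum on $\PGL(2,\C)/\PU(2,\C)$ exactly at the class of isometries, which is therefore the unique minimizer; this proves the theorem and hence statement (i) of Theorem~\ref{RiemSph}.

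I expect the main obstacle to be the reduction step rather than the integration. One must check carefully that $\Psi$ is genuinely a well-defined function on the double coset space — in particular the right-invariance under domain isometries, which is not recorded in the excerpt — and that the $KAK$ normal form $z\mapsto\mu z$ really exhausts every class of $\PGL(2,\C)/\PU(2,\C)$. Once the problem has been made one-dimensional, the integrals are elementary, and the striking cancellation that leaves only $\mu^2+\mu^{-2}$ reduces the whole optimization to the inequality $\mu^2+\mu^{-2}\ge 2$.
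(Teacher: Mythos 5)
Your proof is correct, but it takes a genuinely different route from the paper's. The paper uses only the left-invariance of $\Phi$ under isometries of the target to pass to the coset space, shows by an explicit linear-algebra computation that every class has a representative of the form $(1,0,qe^{i\psi},r)$, evaluates $\bar\Psi$ as a function of the three (effectively two, since the $\psi$-dependence integrates out) real parameters, and then proves that the resulting two-variable function $F(q,r)$ is globally convex by checking positivity of its Hessian via Sylvester's criterion, so that its unique critical point $(0,1)$ is the unique global minimizer. You instead observe the additional right-invariance $\Phi(h\circ f)=\Phi(h)$ for isometries $f$ of the domain (which does hold, since $f^\ast g_M=g_M$ makes $Df(p)$ a linear isometry on each tangent space, hence preserves the fiber norm, and $f^\ast\omega_M=\omega_M$ for holomorphic $f$), pass to the double coset space $\PU(2,\C)\backslash\PGL(2,\C)/\PU(2,\C)$, and invoke the Cartan/singular-value decomposition to reduce to the one-parameter family $z\mapsto\mu z$, $\mu\ge 1$; the minimization then collapses to $\mu^2+\mu^{-2}\ge 2$. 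Your closed form $\Psi(h_\mu)=\frac{\pi R^4}{3}(\mu^2+1+\mu^{-2})-2\pi R^2+\pi$ agrees with the paper's formula upon substituting $q=0$, $r=1/\mu$ (using $(\mu^2-\mu+1)(\mu^2+\mu+1)=\mu^4+\mu^2+1$), and both give the minimum value $\pi(R^2-1)^2$. The trade-off is clear: your route buys a one-variable computation and an AM--GM finish at the cost of justifying two extra structural facts (the right-invariance, which the paper never records, and the $KAK$ normal form), while the paper's route needs only the left quotient and an elementary representative-selection argument but pays with a two-variable Hessian computation. One small point to make explicit in your write-up: to get uniqueness of the minimizing \emph{left} coset (which is what the theorem asserts) rather than of the double coset, you should note that $\Psi(h_\mu)=\Psi(h_1)$ forces $\mu=1$, whence $g=k_1a_1k_2\in\PU(2,\C)$ and the left coset of $g$ is the identity coset; this is implicit in your argument but worth a sentence.
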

\begin{proof}

The function $\Psi$ is well-defined on the homogeneous space
$\PGL(2)/\PU(2)$. Thus, all values of 
$\Psi$ are obtained by choosing its domain to consist of one representative from each equivalence class. 

We claim that each equivalence class
\begin{equation*}
\bigg[\bigg[
\begin{array}{cc}
\al & \be\\
\gamma& \delta
\end{array}
\bigg]\bigg]
\end{equation*}
has a representative of the form 
\begin{equation*}
\bigg(
\begin{array}{cc}
1 & 0\\
z& r
\end{array}
\bigg),
\end{equation*} 
for some  $z \in \C$ and $r \in \R_+$. 

To prove the claim, note that (without loss of generality) we may assume the determinant of the given representative is unity; that is, $\alpha \delta - \beta \gamma=1$. We wish to prove the existence of  $a,c \in \C$ so that 
\begin{equation}
\label{ss4}
\bigg(
\begin{array}{cc}
a & -\bar{c}\\
c& \bar{a}
\end{array}
\bigg)
\bigg(
\begin{array}{cc}
\al & \be\\
\gamma& \delta
\end{array}
\bigg)=
\bigg(
\begin{array}{cc}
1 & 0\\
z& r
\end{array}
\bigg)
\end{equation}
for some $z \in \C$ and $r \in \R_+$.
In other words, it suffices to solve the system of linear equations 
\begin{equation}
\left\{
\begin{array}{l}
a \al-\bar{c} \gamma=1,\\
a \be -\bar{c} \delta=0.
\end{array}
\right.
\end{equation}
In view of the equation $\alpha \delta - \beta \gamma=1$, it follows that $a=\delta$ and $c=\bar{\beta}$.
By substitution of $a$ and $c$ into equation~\eqref{ss4}, we find that
$z=\bar{\beta} \alpha+\bar{\delta}\gamma$ and $r=|\beta|^2+|\delta|^2$. This proves the claim. 

By the claim,  it suffices to consider the value of $\Psi$ only at points of the
form $(1,0,q e^{i \psi}, r)$, where $q\in \R$, $r \in \R_+$, and $\psi \in [0,2 \pi)$. Thus, the theorem is an immediate consequence of the following proposition. 

\noindent\emph{The function $\bar{\Psi}:\R \times[0,2\pi]\times \R_+\to \R$ given by
\begin{equation}
\label{nnn11}
\bar{\Psi}(q, \psi, r)=\Psi(1,0,q e^{i \psi}, r)
\end{equation} 
attains its global minimum on the set of points $(0,\psi,1)$.} 

To prove this result, let us first calculate the integral that represents the 
function $\bar{\Psi}$.

After passing to polar coordinates ($u=\rho \cos \phi$ and $v=\rho \sin \phi$), 
we represent $\bar{\Psi}$ in the form
\begin{equation*}
\bar{\Psi}(q,\psi,r)=\int_0^{\infty}\!\!\! \int_{0}^{2 \pi} \Big[
\frac{R^2 r^2}{(\xi +\eta \cos(\phi+\psi))^2}-\frac{1}{(1+\rho^2)^2}
\Big]^2(1+\rho^2)^2\rho\,d \phi d\rho,
\end{equation*}
where $\xi=\rho^2+\rho^2q^2+r^2$ and $\eta=2\rho qr$.
Since the integrand is periodic with respect to $\phi$ and we are integrating over one period, $\bar{\Psi}(q,\psi,r)$ does not 
depend on $\psi$; that is,
\begin{eqnarray*}
\bar{\Psi}(q,\psi,r)=\int_0^{\infty}\!\!\! \int_{0}^{2 \pi} \Big[
\frac{R^2 r^2}{(\xi +\eta \cos\phi)^2}-\frac{1}{(1+\rho^2)^2}
\Big]^2(1+\rho^2)^2\rho\,d \phi d\rho.
\end{eqnarray*}
The inner integral of the equivalent iterated integral is
\begin{eqnarray}
\nonumber
K(\rho):&=&\int_0^{2\pi}
\Big[
\frac{R^2 r^2}{(\xi +\eta \cos(\phi))^2}-\frac{1}{(1+\rho^2)^2}
\Big]^2(1+\rho^2)^2\rho\, d\phi\\
\nonumber
&=&\Big[R^4r^4\int_0^{2\pi}\frac{1}{(\xi +\eta \cos(\phi))^4}\,d\phi\\
\nonumber
&&{}- 2R^2r^2\frac{1}{(1+\rho^2)^2}
\int_0^{2\pi}\frac{1}{(\xi +\eta \cos(\phi))^2}\,d\phi\\
\label{Krho}
&&{}+ 2\pi\frac{1}{(1+\rho^2)^4}\Big]\rho(1+\rho^2)^2.
\end{eqnarray}
Taking into account the inequalities $\xi>|\eta|$ and
$\eta>0$, the integrals in the previous expression are elementary; their values  are given by
$$
\int_0^{2\pi}\frac{1}{(\xi +\eta \cos(\phi))^2}\,d\phi=\frac{2 
\pi \xi}{(\xi^2-\eta^2)^{3/2}}
$$
and
$$
\int_0^{2\pi}\frac{1}{(\xi +\eta \cos(\phi))^4}\,d\phi=
\frac{\pi\xi(2 \xi^2+ 3 \eta^2)
}{(\xi^2-\eta^2)^{7/2}}.
$$
By substitution into equation~\eqref{Krho}, we find
that
\begin{eqnarray}
\label{PsiExplicit}
\bar{\Psi}(q,\psi, r)&=& \int_0^{\infty}K(\rho)\, d\rho\\
\nonumber
&=&\pi-2\pi R^2+\frac{\pi R^4}{3r^2}\big(1+q^2+(r-1)r\big)(1+q^2+r+r^2).
\end{eqnarray}

The minimum of the function $F(q,r)=\bar{\Psi}(q,\psi,r)$ on $\R \times \R_+$, is easily determined. Indeed, $(0,1)$ is the only critical point of $F$. Also, the Hessian of $F$ is 
\[
D^2F(q,r)
=
\left(
\begin{array}{cc}
\frac{4\pi R^4(1+3 q^2+r^2)}{3 r^2} & -\frac{8\pi  R^4 q(1+q^2)}{3 r^3}\\[0.05in]
-\frac{8\pi  R^4 q(1+q^2)}{3 r^3} & 
\frac{2\pi R^4(3+6 q^2+3 q^4+r^4)}{3 r^4}
\end{array}
\right).
\]
We note that $\pd^2 F(q,r)/\pd q^2$  and the determinant of the Hessian
\[
\det\big(D^2F(q,r)\big)=\frac{8\pi^2 R^8}{9 r^6}(3+q^6+3 r^2+r^4+r^6+q^4(5+3 r^2)+q^2(7+6 r^2+3 r^4))
\]
are both positive by inspection. By Sylvester's criterion, the Hessian is positive definite over the entire domain of $F$; therefore, $F$ is convex. If follows that $(0,1)$ is the unique global minimizer of $F$. The minimum of $F$ is 
$$F(0,1)=\pi(R^2-1)^2.$$

Clearly, points of the form  $(0,\psi,1)$ are the global minima of $\bar{\Psi}$ on its domain
$\R \times [0,2\pi] \times \R_+$.
\end{proof}

\section{Discussion}

For diffeomorphic hypersurfaces $M$ and $N$ in a Euclidean space, we have defined functionals that measure how well a
diffeomorphism $\psi: M \to N$ preserves the geometry of $M$ and proved that
minimizers of these functionals exist.  Since our functionals involve
comparisons of the first and the second fundamental forms on $M$ with 
the pull-backs of the corresponding fundamental forms from $N$ to $M$ by $\psi$,
they measure the extent to which $\psi$ changes the size of vectors carried 
from $M$ to $N$ and the extent to which it preserves the amount of bending of 
the unit normal vector field on $M$. In addition, to maintain flexibility for applications (where some particular combination of the measurements given by the first and the second fundamental forms is desired) we allow the measurement of distortion length and distortion bending to be weighted. 

Even in the case where $M$ and $N$ are one-dimensional, there are examples of such manifolds where the distortion energy functional for the first fundamental form has no minimizer in $\diff(M,N)$ (see~\cite{OpusculaPaper}).  This fact and the complexity of the corresponding Euler-Lagrange equation (derived in appendix~\ref{App:PhiEL}) evidence the difficulty of the general problem of minimization of distortion energy functionals over $\diff(M,N)$.

One of the main ideas used successfully here is to restrict, via an appropriate linearization, the space of diffeomorphisms on which the distortion energy functionals are minimized. 
More precisely, instead of working directly with maps in  $\diff(M,N)$,  we consider time-dependent
vector fields on the ambient Euclidean space that generate the desired diffeomorphisms.  Each such vector field determines an evolution family that can be applied to $M$, for values of the evolution parameter in the interval $[0,1]$,  to produce a morph carrying $M$ to
some diffeomorphic end-hypersurface. We restrict our attention to the class of 
vector fields for which this end-hypersurface is  $N$. The advantage is clear: The elements of $\diff(M,N)$ are replaced by elements of a vector space.  Using this approach,
which already appears in the literature on 
image deformation (see ~\cite{Cao, Dupuis, GrenMill, TYMeta}), we are able to prove the existence of
minimizers for the distortion morphing functional $\SE$ and the distortion bending functional $E$.   The functional $\SE$  is an appropriate choice to measure distortion in case we wish to include the deviation of the geometry of the intermediate surfaces from the original surface $M$.  On the other hand, the distortion functional $E$ is appropriate if we wish to ignore the distortions represented by the intermediate surfaces and only consider the distortion caused by mapping $M$ to $N$. 

We have also gone beyond the existence of minimizers. Indeed, we have  determined minimizers of our distortion energy functionals for some classes of one-dimensional manifolds and for Riemann surfaces. While these special cases have independent interest, they also serve to test conjectures. 

The existence of minimizers provides a hunting license for seeking approximations of these minimal distortion diffeomorphisms and morphs via numerical methods. The known minimizers for special cases can be used to test numerical algorithms.

Our admissible set $\SA_P^k$ of time-dependent vector fields is the
closed ball of radius $P>0$ in the Hilbert space $\SH^k$
of vector fields intersected with the set $\SB$ of all vector fields that carry $M$ to $N$. The  boundedness of $\SA_P^k$ in $\SH^k$ is an essential ingredient in our  proof of the existence of distortion energy minimizers. Thus, there is a natural (open) question: Is there a minimizer in the interior of $\SA_P^k$ 
(in the relative topology of the $\SB$) or are all minimizers on the boundary of this set?

We mention some evidence that suggests there are energy minimizers of the functional $E$ in the interior of $\SA_P^k$ for 
$P>0$ sufficiently large. We expect the same result for $\SE$, but this case seems to be much more subtle.

Our result, discussed in section~\ref{sec:3.1}, that (1) the radial map is a minimal distortion map in $\diff(\Sph^1,\Sph^1_R)$ in case $R>1$  and (2) there is a time-dependent vector field whose time-one map is the radial map, shows that (at least for this special case) interior minimizers exist. Indeed, for $P$ larger than the Hilbert-norm of our time-dependent vector field that produces the radial map, this vector field is in the interior of the admissible set $\mathcal{A}^k_P$.  

To prove the existence of an interior minimizer in general, it suffices to show that there is a number $P_0>0$ such that for every time-dependent vector field $v \in \SH^k$ that carries  $M$ to  $N$  such that its Hilbert-norm $\|v\|_{\SH^k}\ge P_0$, there exists a vector field $v'$ whose Hilbert-norm is less than $P_0$ and whose energy (as measured by $E$ or $\SE$) does not exceed the corresponding energy of $v$.

Suppose that for some large $P$ we have a minimizer of the functional $E$, a time-dependent vector field $v$ whose Hilbert-norm is $P$; that is, $v$ lies on the boundary of the admissible set. The time-one map of $v$ determines the value of $E$. We seek a new vector field $v'$ with smaller Hilbert-norm and the same time-one map as $v$; it would be our desired interior minimizer. A large Hilbert-norm for $v$ suggests that its integral curves have large lengths or largeness in some measure of bending that takes into account the space derivatives of $v$ which are used to compute the Sobolev norm. By shortening and straightening the integral curves of $v$ in some subset of the space $\Omega\times [0,1]$, where our time-dependent vector fields are defined, we can construct a new time-dependent vector field $v'$ whose Hilbert-norm is strictly smaller than the norm of $v$ and whose time-one map is the same as the time-one map of $v$.  

Our choice of admissible time-dependent vector fields, which are defined on 
$\Omega\times [0,1]$, for $\Omega$ a ball  in  $\R^{n+1}$ containing the manifolds 
$M$ and $N$, can be replaced by other sets of functions chosen to not contain 
extraneous information. For example, it might be desirable to consider only 
the curves leading from $M$ to $N$ and not all the integral curves generated 
by our time-dependent vector fields defined on $\Omega\times [0,1]$. One 
alternative admissible set is a subset of a  class of functions we call 
\emph{development vector fields}. They are defined to be functions 
$v:M\times [0,1]\to T\R^{n+1}$ (in an appropriate function space) that generate morphs $F^v:M\times [0,1]\to \R^{n+1}$ via the following construction. Let $\tau_p^q : T_p\R^{n+1}\to T_q \R^{n+1}$ be the parallel transport from the tangent space at $p$ to the tangent space at $q$ defined by the Euclidean metric (or perhaps some other metric) on  $\R^{n+1}$. For each $p\in M\subset \R^{n+1}$, the solution of the initial value problem
\[ \dot c_p=\tau_p^{c_p}v(p,t), \qquad c_p(0)=p\]
is a curve in $\R^{n+1}$ starting on $M$ at $p$. Thus, we have defined a morph $F^v:M\times [0,1]\to \R^{n+1}$ given by $F^v(p,t):=c_p(t)$. The new admissible set is all  development vector fields that are in some closed ball of radius $P>0$ of the corresponding Hilbert space such that $F^v(M,1)=N$. 

We also mention the possibility of defining a {new} norm on our time-dependent or development vector fields by reversing the order of integration:
\[ 
\|v\|^2:=\int_\mathcal{R} \int_0^1\sum_{|\alpha|\le k}|D^\alpha_x v(x,t)|^2 \,dt dx,
\] 
where $\mathcal{R}$ is $\Omega$ for time-dependent vector fields and $M$ 
for development vector fields. This norm may differ from the norm $\|v\|_{\SH^k}$
used throughout this paper as functions in the Hilbert space $\SH^k$  are not necessarily
 $L^1$ in the joint $(x,t)$ variable on $\Omega\times[0,1]$.
Possible advantages of this approach are a better understanding of the relation between this norm and the shapes of the (integral) curves that are used to define morphs and, by first integrating over the temporal parameter, the Sobolev-norm can be viewed as a norm for functions defined on the compact manifold $M$.  Consideration of this norm led us to the concept of a  development vector field. Alternative approaches to the problem of minimal morphing using these ideas are a subject for future research.

\appendix
\section{The Euler-Lagrange Equation for the\\ Deformation Energy Functional}
\label{App:PhiEL}

We will determine the  Euler-Lagrange equation for the 
deformation energy functional $\Phi:\diff(M,N)\to\R_+$ defined in display~\eqref{eq:Funct1}. 
Let $c:(-\ep,\ep)\to \Diff(M,N)$ be a $C^1$ curve at $h \in \diff(M,N)$, which 
we call a \emph{variation} of $h$. The equivalence class 
$[c]_h \in T_h\diff(M,N)$ can be identified with the smooth vector field 
$Y \in \Chi(N)$ defined by $Y(q)=\frac{d}{dt}c(t)\circ h^{-1}(q)$ for all 
$q \in N$. We call the vector field $Y \in \Chi(N)$ a \emph{variational vector field} of $h \in \diff(M,N)$.
We will compute the first variation $D\Phi(h)Y$ for all directions $Y \in \Chi(N)$.

Consider a smooth vector field $X \in \Chi(M)$ with flow $\phi_t$ and  a diffeomorphism $h \in \diff(M,N)$, and suppose that the variation $c(t)=h\circ\phi_t$ induces the variational vector field $Y=h_\ast X \in \Chi(N)$.
The diffeomorphism $h$ is a critical point of the functional $\Phi$ if and only if
\begin{equation}
\label{EL3}
\frac{d}{dt} \Phi(h \circ \phi_t)|_{t=0 }=
D\Phi(h) h_\ast X=2\int_M G(h^\ast g_N-g_M, L_X h^\ast g_N)\w_M=0
\end{equation}
for all $X\in \Chi(M)$, where $G=g_M^\ast\otimes g_M^\ast$.
 
Let $\na$ and $\bar{\na}$ be Riemannian connections on $M$ compatible with the Riemannian metrics $\al=g_M$ and $\be=h^\ast g_N$ respectively, and denote the corresponding Christoffel symbols of $\na$ and $\bar{\na}$ by $\Gamma^i_{jk}$ and $\bar{\Gamma}^i_{jk}$.

Let $Y$ be a smooth vector field on $M$ expressed in components by 
$Y=Y^k \frac{\pd}{\pd x^k}$. The components of the Lie derivative of the Riemannian metric $\be$ in the direction of the vector field $Y$ are
\begin{equation}
\label{lienabla1}
[L_Y \beta]_{km}=\bar{\na}_k Y_m + \bar{\na}_m Y_k,
\end{equation}
where $Y_m=[\be]_{mj} Y^j$ are the lowered coordinates of $Y$ via the Riemannian metric $\be$ (see~\cite{AMR}). 

Recall that $\ST^r{}_s(M)$ is the set of all continuous tensor fields on $M$ 
contravariant of order $r$ and covariant of order $s$, or type $(r,s)$.

\begin{definition}
\label{defB}
(i) Define the tensor field $B=(\beta-\alpha)^{\#\#} \in \ST^2{}_0(M)$. 
In other words, $B$ equals the strain tensor field  
$h^\ast g_N- g_M
$
with both indices raised via the Riemannian metric $\al=g_M$. Its components are given by
$B^{km}=(\be_{ij}-\al_{ij})\al^{ik}\al^{jm}$.

(ii) The bilinear form $A:\Chi(M)\times\Chi(M)\to \Chi(M)$ 
is defined by
\begin{equation}
\label{SS}
A(X,Y)=\bar{\na}_X Y-\na_X Y
\end{equation}
for all $X,Y \in \Chi(M)$.
\end{definition}

\begin{remark}
The bilinear form $A$ can be viewed as a tensor field of type~$(1,2)$ on $M$ with components
\begin{equation}
\label{Scomp}
A^m{}_{kp}=\bar{\Gamma}_{kp}^m -\Gamma_{kp}^m
\end{equation}
(see ~\cite{KN}, proposition~7.10).
\end{remark}

Recall that the divergence of a tensor field $\tau\in\ST^r{}_s(M)$ is defined to be (see \cite{H})
\begin{equation}
\label{tensordiv}
\div \tau=C_{s+1}^r(\na \tau),
\end{equation}
where $C_i^j$ denotes the contraction in lower $i$ and upper $j$ index.
The divergence of $\tau$, $\div \tau$, is a tensor of type~$(r-1,s)$.

For two tensor fields $\theta \in \ST^r{}_2(M)$ and $\tau \in \ST^2{}_s(M)$, 
$\theta:\tau$ denotes the type $(r,s)$ tensor field obtained by the contraction of the two covariant degrees of $\theta$ with the two contravariant degrees of $\tau$.

\begin{proposition}
\label{fundlemma1}
Let $M$ be a compact, connected, and oriented smooth Riemannian $n$-manifold 
without boundary isometrically embedded into $\R^{n+1}$.
For the functional $\Phi(h)=\int_M 
\|h^\ast g_N-g_M\|^2\,\w_M$ with domain $\diff(M,N)$, we have that 
\begin{equation}
\label{DPhiY1}
D\Phi(h)(h_\ast Y)=-4\int_M g_M(\div B+ A:B,Y)\,\w_M
\end{equation}
for all vector fields $Y \in \Chi(M)$, where the tensors $A$ and $B$ are 
as in definition~\ref{defB}.
Moreover, 
$h \in \diff(M,N)$ is a critical point of the functional $\Phi$ if and only if 
$$\div B+A:B=0.
$$
The latter equation can be rewritten in components as follows:
\begin{equation}
\label{ELComp}
\pd_k B^{km}+\Gamma_{kp}^p B^{km}+\bar{\Gamma}_{kp}^m B^{kp}=0
\end{equation}
for all $m=1,2,\ldots,n$.
\end{proposition}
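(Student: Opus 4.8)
The plan is to take the first-variation formula \eqref{EL3} as the starting point and integrate by parts, moving every derivative off the Lie-derivative factor $L_Y\be$ and onto the strain tensor $\be-\al$, so that the integrand becomes a pointwise pairing of a vector field with $Y$; the Euler--Lagrange equation then drops out of the fundamental lemma of the calculus of variations. First I would put the integrand of \eqref{EL3} in components. Since $G=g_M^\ast\otimes g_M^\ast$ and $B$ is, by Definition~\ref{defB}, the strain tensor with both indices raised by $\al=g_M$, the integrand $G(\be-\al,L_Y\be)$ becomes $B^{km}(L_Y\be)_{km}$. Substituting \eqref{lienabla1} and using the symmetry $B^{km}=B^{mk}$ collapses the two terms into $2B^{km}\bar{\na}_kY_m$, where $Y_m=\be_{mj}Y^j$.

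The crucial algebraic step is to trade the $\be$-compatible connection $\bar{\na}$ for the $g_M$-compatible connection $\na$, because the divergence theorem must be run with the volume form $\w_M$, which is the one compatible with $\na$. Using \eqref{Scomp}, the difference of covariant derivatives of the one-form $Y_m$ is $\bar{\na}_kY_m=\na_kY_m-A^p{}_{km}Y_p$. Integrating the first piece by parts on the closed manifold $M$ --- that is, applying $\int_M\na_kV^k\,\w_M=0$ to the vector field $V^k=B^{km}Y_m$ --- produces $\int_M B^{km}\na_kY_m\,\w_M=-\int_M(\na_kB^{km})Y_m\,\w_M$, and $\na_kB^{km}=(\div B)^m$ by the definition \eqref{tensordiv}. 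The leftover term contracts the two covariant slots of $A$ against the two contravariant slots of $B$, which is exactly $(A:B)^m=A^m{}_{kp}B^{kp}$. Collecting terms gives
\begin{equation*}
D\Phi(h)(h_\ast Y)=-4\int_M\big((\div B)^m+(A:B)^m\big)Y_m\,\w_M,
\end{equation*}
which is the variation formula \eqref{DPhiY1}. As this vanishes for every $Y\in\Chi(M)$ and $\w_M$ is a strictly positive volume form, nondegeneracy of the pairing forces $\div B+A:B=0$, the asserted Euler--Lagrange equation.

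To reach the component form \eqref{ELComp}, I would expand $(\div B)^m=\na_kB^{km}=\pd_kB^{km}+\Gamma^k_{kl}B^{lm}+\Gamma^m_{kp}B^{kp}$ by the Leibniz rule for $\na$ on a $(2,0)$-tensor, and write $(A:B)^m=(\bar{\Gamma}^m_{kp}-\Gamma^m_{kp})B^{kp}$ from \eqref{Scomp}. The two copies of $\Gamma^m_{kp}B^{kp}$ cancel, the trace term rewrites as $\Gamma^p_{kp}B^{km}$ by symmetry of the Christoffel symbols in their lower indices, and what remains is precisely $\pd_kB^{km}+\Gamma^p_{kp}B^{km}+\bar{\Gamma}^m_{kp}B^{kp}=0$.

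The step that will require the most care is the integration by parts: one must resist using $\bar{\na}$ (which appears in \eqref{lienabla1}) together with $\w_M$, since that connection and that volume form are not compatible; the device of first rewriting $\bar{\na}$ via $\na$ and the tensor $A$ is exactly what makes the divergence theorem applicable. A related subtlety is that the one-form $Y_m$ in \eqref{lienabla1} is lowered with $\be=h^\ast g_N$, so the pairing that emerges is through $h^\ast g_N$ rather than $g_M$; this is immaterial for the conclusion, since the coefficient of $Y$ must vanish for all $Y$ and both metrics are nondegenerate, so either pairing yields the same equation $\div B+A:B=0$.
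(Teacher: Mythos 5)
Your proof is correct and takes essentially the same route as the paper's: the paper packages your integration by parts as the single statement that the vector field $X^k=B^{km}Y_m$ satisfies $\div_{g_M}X=\tfrac12 G(\be-\al,L_Y\be)+(\div B+A{:}B)^mY_m$, whose integral over the closed manifold $M$ vanishes, which is exactly your divergence-theorem step after trading $\bar{\na}$ for $\na$ via the tensor $A$. Your closing remark that the pairing emerging from the computation is through $h^\ast g_N$ (since $Y_m$ is lowered with $\be$) rather than through $g_M$ as written in display~\eqref{DPhiY1} identifies a harmless imprecision in the paper, and you correctly note that it does not affect the Euler--Lagrange equation $\div B+A{:}B=0$.
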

\begin{proof}
For given $Y \in \Chi(M)$, consider the vector field 
\begin{equation}
X=(\beta_{ij}-\alpha_{ij})\al^{ik}\al^{jm}Y_m \frac{\pd}{\pd x_k},
\end{equation}
where $\al=g_M$ and $\be=h^\ast g_N$.
Although we describe $X$ pointwise using local coordinates, $X$ is a well defined smooth 
vector field on $M$ because it is obtained by various contractions of the tensor fields 
$\alpha, \beta$, and $ Y$. It can be verified that the divergence of the vector field $X$ 
with respect to the  Riemannian metric $g_M$ is
$$
\div _{g_M} X= \frac{1}{2}G(\be-\al, L_Y \be) + g_M(\div B+A:B,Y).
$$
Because $M$ is without boundary, $\int_M \div _{g_M} X \w_M=0$. Using this and the equality 
$$
D\Phi(h)(h_\ast Y)= 2\int_M G(\beta-\alpha,L_Y \be)\w_M,
$$
we conclude that 
$$
D\Phi(h)(h_\ast Y)=-4\int_M g_M(\div B+ A:B,Y)\w_M
$$
as required.
\end{proof}

\begin{corollary}
\label{Cor:RadialCrPt}
Let $M$ be a manifold as in proposition~\ref{fundlemma1} and $h\in \diff(M,N)$. If 
$h^\ast g_N=R^2 g_M$ for some $R \in \R$, then $h$ is a critical point of 
the functional $\Phi$. 

In particular,
 let $h_R:\R^{n+1}\to\R^{n+1}$ be the radial map defined by $h_R(p)=Rp$ for all $p \in \R^{n+1}$, where $R>0$. Assume that $N=h_R(M)$ is a rescaled
version of the manifold $M$, and the Riemannian metrics $g_M$ and $g_N$ on the manifolds $M$ and $N$ are inherited from $\R^{n+1}$. Then every composition $h=f \circ h_R|_M$ 
of the radial map $h_R|_M$ with an isometry $f \in \diff(N)$ is a critical
point of the functional $\Phi$.
\end{corollary}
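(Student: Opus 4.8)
The plan is to deduce both assertions from the Euler--Lagrange criterion of Proposition~\ref{fundlemma1}, according to which $h \in \diff(M,N)$ is a critical point of $\Phi$ if and only if $\div B + A\!:\!B = 0$. The whole corollary reduces to understanding the tensors $A$ and $B$ in the special situation $\beta := h^\ast g_N = R^2\alpha$, where $\alpha := g_M$. So I would first prove the general statement under this conformal hypothesis, and then verify that the ``in particular'' hypotheses force exactly this relation.

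For the first assertion I would compute $B$ and $A$ explicitly. Since $\beta - \alpha = (R^2-1)\alpha$, raising both indices with $\alpha$ gives $B^{km} = (\beta_{ij}-\alpha_{ij})\alpha^{ik}\alpha^{jm} = (R^2-1)\alpha^{km}$, so $B$ is a constant multiple of the inverse metric. Next I would observe that scaling a metric by the positive constant $R^2$ leaves its Levi--Civita connection unchanged: in $\bar{\Gamma}^m_{kp} = \tfrac12\beta^{ml}(\partial_k\beta_{lp}+\partial_p\beta_{lk}-\partial_l\beta_{kp})$ the factor $R^2$ coming from $\beta_{ij}$ cancels the factor $R^{-2}$ coming from $\beta^{ml}$, so $\bar{\Gamma}^m_{kp}=\Gamma^m_{kp}$ and hence $A = \bar{\nabla}-\nabla = 0$; in particular $A\!:\!B = 0$. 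It then remains only to check $\div B = 0$, which is immediate from metric compatibility: $\div B = (R^2-1)\,\nabla_k\alpha^{km}$ and $\nabla\alpha = 0$ forces $\nabla\alpha^{-1}=0$. Equivalently, one substitutes $B^{km}=(R^2-1)\alpha^{km}$ and $\bar{\Gamma}=\Gamma$ directly into the component form~\eqref{ELComp} and recognizes the bracket as $(R^2-1)$ times the (vanishing) covariant divergence of the inverse metric. Thus $\div B + A\!:\!B = 0$ and $h$ is a critical point.

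For the second assertion I would first show that the radial map satisfies $(h_R|_M)^\ast g_N = R^2 g_M$. Writing $\iota_M,\iota_N$ for the inclusions into $\R^{n+1}$ and $\bar{g}$ for the Euclidean metric, naturality of the pullback together with $\iota_N\circ(h_R|_M) = h_R\circ\iota_M$ and $h_R^\ast\bar{g} = R^2\bar{g}$ (as $Dh_R = R\cdot\mathrm{Id}$) gives $(h_R|_M)^\ast g_N = \iota_M^\ast h_R^\ast\bar{g} = R^2\iota_M^\ast\bar{g} = R^2 g_M$. Then, for $h = f\circ h_R|_M$ with $f$ an isometry of $N$, the identity $f^\ast g_N = g_N$ yields $h^\ast g_N = (h_R|_M)^\ast f^\ast g_N = (h_R|_M)^\ast g_N = R^2 g_M$, so $h$ satisfies the hypothesis of the first assertion and is therefore a critical point of $\Phi$.

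There is no deep obstruction here; the content lies in two structural cancellations. The step requiring the most care is the simultaneous vanishing of the two summands in $\div B + A\!:\!B$: one must notice that a constant conformal factor contributes nothing to the difference tensor $A$ (so that the only potential anomaly disappears) while metric compatibility annihilates $\div B$. I would take care to keep the index bookkeeping in~\eqref{ELComp} consistent with the convention for the divergence of a $(2,0)$-tensor fixed in Proposition~\ref{fundlemma1}, since that is the only place where a contraction or sign slip could enter.
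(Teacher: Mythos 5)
Your proposal is correct and follows the same route the paper intends: the paper dismisses the corollary as ``an easy verification of equation~\eqref{ELComp},'' and your computation --- $B^{km}=(R^2-1)\alpha^{km}$, invariance of the Levi--Civita connection under constant rescaling so that $A=0$, and metric compatibility killing $\div B$, together with the pullback identity $(h_R|_M)^\ast g_N=R^2g_M$ and $f^\ast g_N=g_N$ --- is exactly that verification carried out in full.
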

The proof of this corollary is an easy verification of equation \eqref{ELComp}.

\section{Existence and Convergence\\ Results for Evolution Operators}
\label{Append:A}

In this section we state results on existence and convergence of certain evolution operators.

We denote the Euclidean norm of an element $A \in \R^m$, where $m \in \N$, by $|A|$ and  the Hilbert space $L^2(0,1;V^k)$ by ${\SH^k}$, where the Sobolev space 
$V^k=W^{k,2}_0(\Omega;\R^{n+1})$ is embedded into $C^r(\bar{\Omega};\R^{n+1})$ and $r\geq 2$. Recall that Sobolev's theorem guarantees the latter embedding if
$k\geq (n+1)/2+r+1$. The following lemma is proved in~\cite{Dupuis}.

\begin{lemma}[Dupuis, Grenander, Miller]
\label{Lemma:exist}
For every time-dependent vector field $v \in {\SH^k}$ and $t_0 \in [0,1]$,  there exists a function $\phi:[0,1]\times \Omega \to \R^{n+1}$ such that 
$t\mapsto \phi(t,x)$ is the unique absolutely continuous solution of the initial value problem
\begin{equation}
\label{FF1}
\left\{
\begin{array}{ll}
\frac{dq}{dt}=v(q,t),\\
q(t_0)=x
\end{array}
\right.
\end{equation} 
for all $t \in [0,1]$. Moreover, the function $x \mapsto \phi(t,x)$ is a homeomorphism of $\Omega$. 
\end{lemma}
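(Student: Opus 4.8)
The plan is to cast the initial value problem~\eqref{FF1} as a \emph{Carath\'eodory differential equation}, in which the vector field is only measurable in time but Lipschitz in space, and then invoke the corresponding existence, uniqueness, and continuous-dependence theory. The crucial input is the Sobolev embedding $V^k \hookrightarrow C^r(\bar\Omega;\R^{n+1})$ with $r \geq 2$ (guaranteed by $k \geq (n+1)/2 + r + 1$): for almost every $t \in [0,1]$ the field $v(\cdot,t)$ is of class $C^r$, and there is a constant $C = C(\Omega, k) > 0$ with
\begin{equation}
\|v(\cdot,t)\|_{C^1(\bar\Omega)} \leq C\,\|v(\cdot,t)\|_{V^k}.
\end{equation}
Because $v(\cdot,t) \in W^{k,2}_0(\Omega;\R^{n+1})$, its extension by zero to $\R^{n+1}\setminus\Omega$ is again of class $C^1$ and vanishes outside $\bar\Omega$; I would work with this global extension, so that no trajectory can leave $\bar\Omega$ and $\Omega$ is flow-invariant.

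First I would record the two structural bounds that make the Carath\'eodory theory apply. Setting $m(t) := C\|v(\cdot,t)\|_{V^k}$, the estimate above yields both the uniform-in-$x$ growth bound $|v(x,t)| \leq m(t)$ and the spatial Lipschitz bound $|v(x,t)-v(y,t)| \leq m(t)\,|x-y|$ for all $x,y$ and a.e.\ $t$. Since $v \in {\SH^k} = L^2(0,1;V^k)$, the function $m$ lies in $L^2(0,1)$, hence in $L^1(0,1)$ because $[0,1]$ has finite measure; moreover $t\mapsto v(x,t)$ is measurable for each fixed $x$. These are exactly the hypotheses of the Carath\'eodory existence theorem, which produces a locally absolutely continuous solution of~\eqref{FF1}. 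The integrable Lipschitz bound then gives uniqueness. The growth bound yields $|\phi(t,x)-x|\leq \int_{t_0}^{t} m(s)\,ds$, which is finite and precludes finite-time blow-up; together with the invariance of $\bar\Omega$ this extends the solution to all of $[0,1]$, producing the map $\phi:[0,1]\times\Omega \to \R^{n+1}$ with $t\mapsto\phi(t,x)$ absolutely continuous and unique.

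For the homeomorphism assertion I would use continuous dependence on initial data. Gronwall's inequality with the integrable Lipschitz constant $m$ gives
\begin{equation}
|\phi(t,x)-\phi(t,y)| \leq |x-y|\,\exp\Big(\int_{t_0}^{t} m(s)\,ds\Big),
\end{equation}
so $x\mapsto\phi(t,x)$ is Lipschitz continuous for each fixed $t$. Its inverse is obtained by solving~\eqref{FF1} backward: the evolution operator $\eta^v(t_0;t,\cdot)$ undoes $\eta^v(t;t_0,\cdot)=\phi(t,\cdot)$, since the composition solves the same initial value problem as the identity and uniqueness forces the two to coincide. The backward map satisfies an identical Gronwall estimate and is therefore also continuous, so $x\mapsto\phi(t,x)$ is a homeomorphism of $\Omega$.

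The main obstacle is precisely the low temporal regularity: because $v$ is merely $L^2$ (hence only $L^1$ after the embedding bound) in $t$, the classical Picard--Lindel\"of theorem for continuous vector fields does not apply, and one must argue within the Carath\'eodory framework throughout, with existence, uniqueness, and the Gronwall estimates all using the integrable-in-time Lipschitz constant $m$ rather than a uniform one. The point requiring genuine care is verifying that the Sobolev embedding really delivers a spatial Lipschitz bound that is integrable in $t$; this is what forces the hypothesis $r\geq 1$ (indeed $r\geq 2$ in this appendix) and the choice $k\geq (n+1)/2+r+1$.
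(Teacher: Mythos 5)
Your Carath\'eodory argument is correct: the Sobolev embedding supplies the time-integrable bound $m(t)=C\|v(\cdot,t)\|_{V^k}\in L^2(0,1)\subset L^1(0,1)$ that simultaneously controls growth and the spatial Lipschitz constant, and existence, uniqueness, global extension, the Gronwall estimate, and the backward-flow inverse all follow as you describe (the flow-invariance of $\Omega$ itself rests on boundary points being equilibria of the zero-extended field plus uniqueness, which you implicitly use). The paper offers no proof of this lemma --- it is quoted from Dupuis, Grenander, and Miller --- and your argument is essentially the standard one given in that reference, so there is nothing to reconcile.
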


For every $v\in {\SH^k}$ and $x \in \Omega$, let $F^v(x,t)$ be the solution of the evolution equation ${dq}/{dt}=v(q,t)$ with the initial condition $F^v(x,0)=x$. For a  function $f \in C^r(\Omega)$, denote
$$
\|f\|_{r,\infty}=\sum_{\al,|\al|\leq r}\sup_{x \in \Omega}|D^\al f(x)|,
$$
where $\al=(\al_1,\ldots,\al_{n+1})$ is a multi-index with nonnegative integer components, $|\al|=\al_1+\ldots+\al_{n+1}$, and 
$D^\al f=\frac{\pd^{|\al|}f}{\pd x_1^{\al_1}\ldots x_{n+1}^{\al_{n+1}}}$.

More general versions of the following two lemmas are proved in~\cite[Appendix~C]{TrouveYounes}.

\begin{lemma}[Trouve, Younes]
\label{lemma:Diff}
If $v \in {\SH^k}$ and $F^v:\Omega\times[0,1]\to \R^{n}$ is defined as above,  then the function $x\mapsto F^v(x,t)$ is in class $C^r(\Omega)$
and, for all $ q \leq r$,
$$
\frac{\pd}{\pd t}D^q_xF^v(x,t)=D^q_x\big(v(F^v(x,t),t)\big),
$$
where $D^q_x$ denotes the derivative with respect to $x$ of order $q$.
Moreover, there exist positive constants $C$ and  $C'$ such that
$$
\sup_{t\in[0,1]}\|F^v(\cdot,t)\|_{r,\infty}\leq Ce^{C'\|v\|_{{\SH^k}}}
$$
for all $v\in {\SH^k}$.
\end{lemma}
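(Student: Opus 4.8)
The plan is to bootstrap from the existence and uniqueness of the flow supplied by Lemma~\ref{Lemma:exist}, combined with the Sobolev embedding $V^k\hookrightarrow C^r(\bar{\Om};\R^{n+1})$, which guarantees that for almost every $t$ the field $v(\cdot,t)$ is of class $C^r$ and satisfies $\|v(\cdot,t)\|_{r,\infty}\le c_S\,\|v(\cdot,t)\|_{V^k}$ for the embedding constant $c_S$. Integrating this over $[0,1]$ and applying the Cauchy--Schwarz inequality (the interval has length one) yields the basic estimate
\[
\int_0^1\|v(\cdot,t)\|_{r,\infty}\,dt\le c_S\int_0^1\|v(\cdot,t)\|_{V^k}\,dt\le c_S\,\|v\|_{\SH^k},
\]
which will drive every Gronwall argument below.

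First I would establish the spatial regularity and the differentiation formula simultaneously, by induction on the order $q\le r$. For $q=0$ the flow is absolutely continuous in $t$ and continuous in $x$ by Lemma~\ref{Lemma:exist}. Assuming $F^v(\cdot,t)\in C^{q-1}$, the candidate for $D^q_xF^v$ is the solution $J_q$ of the \emph{variational equation} obtained by formally applying $D^q_x$ to $dF^v/dt=v(F^v,t)$; by the chain rule (Fa\`a di Bruno) this reads
\[
\frac{\pd}{\pd t}J_q=D_xv(F^v(x,t),t)\,J_q+R_q,
\]
where the inhomogeneity $R_q$ is a universal polynomial in the higher spatial derivatives $D^j_xv$ ($2\le j\le q$) evaluated along the flow and in the lower-order flow derivatives $D^i_xF^v$ ($i<q$), all of which are continuous by the inductive hypothesis. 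Since the coefficient $D_xv(F^v,t)$ is integrable in $t$ and $R_q$ is continuous, this linear equation has a unique absolutely continuous solution depending continuously on $x$; a standard difference-quotient argument then identifies $J_q$ with the genuine derivative $D^q_xF^v$, proving $F^v(\cdot,t)\in C^q$ and, upon interchanging $\pd_t$ and $D^q_x$ (legitimate once $C^q$-regularity is in hand), the stated identity $\pd_t D^q_xF^v=D^q_x\bigl(v(F^v,t)\bigr)$.

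Next I would prove the exponential bound, again inductively on $q$. For $q=0$, $|F^v(x,t)-x|\le\int_0^1\|v(\cdot,s)\|_{0,\infty}\,ds\le c_S\|v\|_{\SH^k}$, so the sup-norm is controlled since $\Om$ is bounded. For $q=1$, the Jacobian $J_1=D_xF^v$ solves $\pd_tJ_1=D_xv(F^v,t)\,J_1$ with $J_1(x,0)=I$; taking Euclidean norms and applying Gronwall's inequality gives
\[
|J_1(x,t)|\le\exp\!\Bigl(\int_0^1\|v(\cdot,s)\|_{1,\infty}\,ds\Bigr)\le\exp\!\bigl(c_S\|v\|_{\SH^k}\bigr).
\]
For the inductive step I would apply Gronwall to the equation for $J_q$: its homogeneous part again contributes the factor $\exp(c_S\|v\|_{\SH^k})$, while the forcing term $R_q$ is bounded, via the inductive hypothesis, by a product of terms of the form $Ce^{C'\|v\|_{\SH^k}}$; absorbing the constants produces a bound of the same shape for $\sup_t|D^q_xF^v|$. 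Summing over $0\le q\le r$ then yields $\sup_{t}\|F^v(\cdot,t)\|_{r,\infty}\le Ce^{C'\|v\|_{\SH^k}}$.

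The main obstacle is the higher-order step: I must control the Fa\`a di Bruno remainder $R_q$ so that, after integration and Gronwall, its contribution stays inside a \emph{single} exponential of $\|v\|_{\SH^k}$ rather than accumulating nested exponentials. The key observation is that every factor $D^j_xv$ enters only through $\int_0^1\|v(\cdot,s)\|_{r,\infty}\,ds\le c_S\|v\|_{\SH^k}$, while the lower-order flow derivatives are already uniformly bounded, so the constants $C,C'$ can be chosen to depend only on $r$, $n$, $\Om$, and $c_S$. A secondary technical point is that the $t$-differentiations hold only almost everywhere, so each "ODE" above is genuinely an integral identity; the interchange of $\pd_t$ with $D^q_x$ and the difference-quotient limits must be executed in integrated form — precisely the setting carried out in~\cite[Appendix~C]{TrouveYounes}.
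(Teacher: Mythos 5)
The paper does not actually prove this lemma: it is quoted (in a specialized form) from Trouv\'e and Younes~\cite[Appendix~C]{TrouveYounes}, so there is no in-paper argument to compare against. Your sketch --- the Sobolev embedding giving $\int_0^1\|v(\cdot,t)\|_{r,\infty}\,dt\le c_S\|v\|_{\SH^k}$, induction on the order $q$ via the variational equations interpreted in integrated (Carath\'eodory) form, and Gronwall together with the observation that the Fa\`a di Bruno remainder $R_q$ contributes only a single exponential of $\|v\|_{\SH^k}$ --- is precisely the standard argument carried out in that reference, and it is correct.
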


Recall that we say $v^{l}\wto v$ weakly in ${\SH^k}$ as $l \to \infty$ if $\langle v^{l}-v,w\rangle\to 0$ as $l\to \infty$ for all $w \in {\SH^k}$.
\begin{lemma}[Trouve, Younes]
\label{lemma:DerBded}
If the sequence $\{v^l\}_{l=1}^\infty\subset {\SH^k}$ converges weakly to $v \in {\SH^k}$ as $l\to \infty$, 
 then 
$$
\sup_{t \in [0,1]}\|F^{v^l}(\cdot,t)- F^v(\cdot,t)\|_{r-1,\infty}\to 0
$$
as $l \to \infty$. 
Moreover, there exists a constant $K>0$ such that
\begin{equation}
\label{eq:DerBded}
 \sup_{t\in[0,1]}\|F^{v^l}(\cdot,t)\|_{r,\infty}\leq K
\end{equation}
for all $l \in \N$.
\end{lemma}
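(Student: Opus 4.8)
The plan is to treat the two assertions separately: the uniform bound follows at once from the preceding lemma together with a uniform-boundedness argument, while the convergence is established order by order in the spatial derivative via a Gronwall estimate whose inhomogeneous term is controlled by weak convergence. Throughout I would abbreviate the Lipschitz bound of $v^l(\cdot,s)$ coming from the Sobolev embedding $V^k\hookrightarrow C^r(\bar\Omega)$ (with $r\ge 2$) by $a(s):=c\,\|v^l(\cdot,s)\|_{V^k}$, noting that $\int_0^1 a(s)\,ds\le c\,\|v^l\|_{{\SH^k}}$.

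For the bound I would first recall that a weakly convergent sequence is bounded: by the Banach--Steinhaus theorem there is $M$ with $\sup_l\|v^l\|_{{\SH^k}}\le M$. Lemma~\ref{lemma:Diff} then yields $\sup_t\|F^{v^l}(\cdot,t)\|_{r,\infty}\le C e^{C'M}=:K$ for all $l$, which is exactly the ``moreover'' statement; in particular the spatial derivatives $D^q_xF^{v^l}$ up to order $r$ are uniformly bounded, a fact used repeatedly below.

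The heart of the matter is $\sup_t\|F^{v^l}(\cdot,t)-F^v(\cdot,t)\|_{r-1,\infty}\to 0$, which I would prove by induction on the derivative order $j=0,1,\dots,r-1$. Differentiating the integral form $F^v(x,t)=x+\int_0^t v(F^v(x,s),s)\,ds$ in $x$ $j$ times (legitimate by Lemma~\ref{lemma:Diff}), the difference $\Delta^l_j:=D^j_xF^{v^l}-D^j_xF^v$ satisfies an identity of the schematic form
\[
\Delta^l_j(x,t)=\int_0^t a(s)\,O\!\left(|\Delta^l_j(x,s)|\right)\,ds + R^l_j(x,t)+ S^l_j(x,t),
\]
where the first term is the Gronwall contraction term, $R^l_j$ collects the terms in which a fixed field $v^l$ is evaluated at the two nearby points $F^{v^l}$ and $F^v$ (so $|R^l_j|$ is bounded by uniform constants times the already-established lower-order differences $\sup|\Delta^l_i|$, $i\le j$, using that $D^2v^l$ is bounded since $r\ge 2$), and the inhomogeneous source term is
\[
S^l_j(x,t)=\int_0^t \big(D^{m}_x(v^l-v)\big)(F^v(x,s),s)\,[\text{bounded factors}]\,ds,\qquad m\le j.
\]
Once $\sup_{x,t}|R^l_j|\to0$ and $\sup_{x,t}|S^l_j|\to0$ are known, Gronwall (with the $L^1$ coefficient $a$) gives $\sup_{x,t}|\Delta^l_j|\le e^{cM}\sup_{x,t}\big(|R^l_j|+|S^l_j|\big)\to0$, advancing the induction.

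The only genuinely delicate point, and the one I expect to be the main obstacle, is the uniform decay of the source term $S^l_j$, where weak convergence enters. For fixed $(x,t)$ the map $w\mapsto\int_0^t (D^m_x w)(F^v(x,s),s)\,[\text{bounded factors}]\,ds$ is a bounded linear functional $L_{x,t}$ on ${\SH^k}$: since $m\le j\le r-1$, point evaluation of $D^m_x w$ is bounded on $V^k$ by the Sobolev embedding, and Cauchy--Schwarz in $s$ gives $|L_{x,t}(w)|\le c\,\|w\|_{{\SH^k}}$, so $S^l_j(x,t)=L_{x,t}(v^l-v)\to0$ pointwise. To upgrade this to uniform convergence I would check that $(x,t)\mapsto L_{x,t}$ is continuous from the compact set $\bar\Omega\times[0,1]$ into $({\SH^k})^\ast$ in operator norm --- this uses the Lipschitz dependence of $y\mapsto D^m_x w(y)$, valid precisely because $m\le r-1$ forces $V^k\hookrightarrow C^{m+1}$ --- and combine it with the boundedness of $\{v^l-v\}$ to conclude that the functions $(x,t)\mapsto L_{x,t}(v^l-v)$ are equicontinuous; an equicontinuous, pointwise-null sequence on a compact set tends to zero uniformly by the Arzel\`a--Ascoli criterion. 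This same loss of one derivative explains why the conclusion is stated in $C^{r-1}$ and not $C^r$: at the top order $m=r$ the functional $L_{x,t}$ would require $V^k\hookrightarrow C^{r+1}$ to depend continuously on $(x,t)$, and weak convergence supplies only a uniform bound, not convergence, of $D^r_xF^{v^l}$. With the source and remainder terms controlled, the remaining Gronwall bookkeeping is routine.
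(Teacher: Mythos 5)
The paper itself contains no proof of lemma~\ref{lemma:DerBded}: it is stated in the appendix with an attribution to Trouv\'e and Younes and the remark that more general versions are proved in Appendix~C of \cite{TrouveYounes}, so there is no in-paper argument to compare yours against step by step. Judged on its own merits, your proof is correct, and it reconstructs essentially the standard argument from the cited source. The uniform bound~\eqref{eq:DerBded} is exactly as you say: Banach--Steinhaus gives $\sup_l\|v^l\|_{\SH^k}\le M$, and lemma~\ref{lemma:Diff} converts this into $\sup_t\|F^{v^l}(\cdot,t)\|_{r,\infty}\le Ce^{C'M}$. For the convergence, the induction on the derivative order with a Gronwall contraction term whose coefficient is the $L^1$-in-time Lipschitz bound $a(s)=c\|v^l(\cdot,s)\|_{V^k}$ is the right skeleton, and you have correctly located where weak convergence enters: the source term is a bounded linear functional $L_{x,t}$ of $v^l-v$ (point evaluation of $D^m_x$ with $m\le r-1$ is bounded on $V^k$, and Cauchy--Schwarz in $s$ handles the time integral), so it vanishes pointwise, and the upgrade to uniform vanishing via norm-continuity of $(x,t)\mapsto L_{x,t}$ plus Arzel\`a--Ascoli is sound because the relevant derivatives of $w$ and of $F^v$ are Lipschitz, precisely thanks to the one-derivative margin $m\le r-1$. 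Your diagnosis of why the conclusion lives in $C^{r-1}$ rather than $C^r$ is also correct, though the loss actually bites in two places at top order, not just in the source term: controlling $D^r v^l(F^{v^l})-D^r v^l(F^v)$ in the remainder would require a bound on $D^{r+1}v^l$, which is likewise unavailable. One small bookkeeping point: in your description of $R^l_j$ you allow differences of order $i\le j$, but the order-$j$ contribution (which arises only from the term $Dv^l(F^{v^l})\,[D^j_xF^{v^l}-D^j_xF^v]$) must be absorbed into the Gronwall term rather than into $R^l_j$, since at stage $j$ of the induction only the orders $i<j$ are ``already established''; your explicit separation of the contraction term shows you intend exactly this, so the argument goes through.
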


\section{Proof of Lemma~\ref{lemma:NormsCont}}\label{Append:B}

\noindent\textbf{Proof of lemma~\ref{lemma:NormsCont}.}
\begin{proof}
The continuity of  $\eta$ in the $\beta$ variable is an immediate consequence of
 the
 equivalence of all the norms on the finite-dimensional space $T^{0}{}_{s}(X)$.

Let us  show that $\eta$ is continuous in the $g$ variable.

Fix $\beta \in T^{0}{}_{s}(X)$ and $g \in \Metrics(X)$. 
Let $K(g)$ be a positive constant such that $|v|_{g_{X}}\leq K(g)$ for all $v
\in S_{g}$, and let $C(g)$ be a positive constant such that 
$|v|_{g_{X}}\leq C(g)|v|_g$ for all $v
\in X$.

Choose $\ep >0$. We leave it to the reader to show that there exists $\delta>0$ such that for all $v \in S_{g}$ and
$g' \in \Metrics (X)$ satisfying $\|g-g'\|_{g_{X}}<\delta$ we have
\begin{equation}
\label{eq:sphCompare}
|v-\frac{v}{|v|_{g'}}|_{g}<\frac{\ep}{s\|\beta\|_{g_{X}}C(g)(2K(g))^{s-1}}.
\end{equation}

  Let the vectors $v_{1}, \ldots, v_{s} \in S_{g}$ be such that
$|\beta(v_{1},\ldots,v_{s})|=\|\beta\|_{g}$, and define
 $u_{i}:={v_{i}}/{|v_{i}|_{g'}} \in S_{g'}$, where $\|g-g'\|_{g_{X}}<\delta$. Then
\begin{eqnarray*}
\|\beta\|_{g}&=&|\beta(v_{1},\ldots,v_{s})|\\
&=&|\beta(v_{1}-u_{1}+u_{1},\ldots,v_{s}-u_{s}+u_{s})|\\
&\leq&|\beta(u_{1},\ldots,u_{s})|+\|\beta\|_{g_{X}}C(g)(2K(g))^{s-1}\sum_{i=1}^{s}|u_{i}-v_{i}|_{g}\\
&<& \|\beta\|_{g'}+\ep.
\end{eqnarray*}

Therefore,
$\|\beta\|_{g'}> \|\beta\|_{g}-\ep$. The inequality $\|\beta\|_{g}> \|\beta\|_{g'}-\ep$ can be shown in the same
fashion. Hence, the function $\eta$ is continuous in the $g$
variable.
\end{proof}

\section{Proofs of lemmas~\ref{lm:psiPr2Ex1R} and~\ref{lm:PsiNecCond}}\label{Append:C}
\textbf{Proof of lemma~\ref{lm:psiPr2Ex1R}}
\begin{proof}
The proof consists of two main steps:
(1) Using the direct method of the calculus of variations, we will prove the existence of a minimizer for the auxiliary optimization problem
\begin{equation}
\label{OptPr2.5}
\begin{array}{lll}
\mbox{minimize }  J(\psi), \\
\psi\in Q^{1,4/3}:=\{\phi \in W^{1,\frac{4}{3}}(0,1): \psi(0)=1, \psi(1)=R\}\\
\mbox{subject to } G(\psi)\leq 0.
\end{array}
\end{equation}
(2) We will show that the minimizer for problem~\eqref{OptPr2.5}  is in 
$W^{1,2}(0,1)$.\\
If follows that this minimizer is a minimizer of the optimization problem~\eqref{OptPr2}.

Let $\{\psi_n\}_{n=1}^\infty\subset Q^{1,4/3}$ be a 
minimizing sequence for the optimization problem~\eqref{OptPr2.5}. In particular, $G(\psi_n)\leq 0$  for every positive integer $n$.  In symbols, 
$$
J(\psi_n)\to \inf_{\psi\in Q^{1,4/3}, G(\psi)\leq 0} J(\psi).
$$ 

We claim that the minimizing sequence is
bounded in $W^{1,4/3}(0,1)$. To prove this fact, we use 
the triangle inequality for the $L^2(0,1)$ norm to make the estimate
\begin{eqnarray*}
\{\int_0^1 \psi_n^4\,dt\}^{1/2}&=&\{\int_0^1(\psi_n^2-1+1)^2\,dt\}^{1/2}\\
&\leq&
J(\psi_n)^{1/2}+1\\
&\leq& \sqrt{M}+1,
\end{eqnarray*}
where $M>0$ is a uniform bound for the convergent sequence $\{J(\psi_n)\}_{n=1}^\infty$.
By H\"{o}lder's inequality with the
conjugate constants $3$ and $3/2$, 
\begin{eqnarray*}
\int_0^1 |\psi_n'|^{4/3}\,dt&=&\int_0^1|\psi_n|^{4/3} 
\Big(\frac{|\psi_n'|}{|\psi_n|}\Big)^{4/3}\,dt\\
&\leq& \Big\{\int_0^1|\psi_n|^{4}\,dt\Big\}^{1/3}
\Big\{\int_0^1\Big(\frac{|\psi_n'|}{|\psi_n|}\Big)^{2}\,dt\Big\}^{2/3}\\
&\leq& \big((\sqrt{M}+1)A\big)^{2/3},
\end{eqnarray*}
as required.

Because the Banach space $W^{1,4/3}(0,1)$ is reflexive, $\psi_n\wto\psi$ weakly in $W^{1,4/3}(0,1)$ for some $\psi \in
W^{1,4/3}(0,1)$, up to a subsequence. We have $\psi \in Q^{1,4/3}$ because the subspace
$W_0^{1,4/3}(0,1)$ is weakly closed in $W^{1,4/3}(0,1)$. 

The integrands $(\psi^2-1)^2+(\psi-1)^2$ and $(\psi')^2/\psi^2$ of $J$ and $G$
respectively are both convex functions of $\psi'$. Therefore, the functionals
$J$ and $G$ are weakly lower semicontinuous in $W^{1,4/3}(0,1)$ (see
theorem~1, Sec.~8.2 in \cite{Evans}). But then $G(\psi)\leq \liminf_{n \to
\infty}G(\psi_n)\leq 0$ and $\psi \in Q^{1,4/3}$ solves optimization
problem~\eqref{OptPr2.5}.

To prove that $\psi\geq 1$, let us assume, on the contrary, that there exists (in the usual topology of $[0,1]$) 
an open set $W$ of positive measure
such that $\psi(t)<1$ for all $t \in W$. Define the cut-off function $h_1 \in Q^{1,4/3}$
 by
$
h_1(t)=\max\{1,\psi(t)\}.
$
It is easy to check that $G(h_1)\leq 0$ and that $J(h_1)<J(\psi)$, which
contradicts the minimizing property of $\psi$.
The inequality $\psi(t)\leq R$ for all $t \in[0,1]$ can be verified in a 
similar fashion, using the cut-off function $h_2(t)=\min\{R,\psi(t)\}$.

Using the
inequality $\psi\leq R$, we have the estimate
$$
\int_0^1(\psi')^2\,dt=\int_0^1\psi^2\Big(\frac{\psi'}{\psi}\Big)^2\,dt\leq R^2
\,A.
$$
Therefore, $\psi$ belongs to the space $W^{1,2}(0,1)$.

Finally, the uniqueness of $\psi$ follows from the fact that the equality
$J(\psi_1)=J(\psi_2)$, where $\psi_1,\psi_2\in Q^{1,2}$ are such that
$1\leq\psi_{1,2}\leq R$, implies
$u\circ\psi_1(t)=u\circ\psi_2(t)$ for all $t \in [0,1]$, where the function
$u(s)=(s^2-1)^2+(s-1)^2$ is strictly increasing on $(1,+\infty)$.

\end{proof}

\textbf{Proof of lemma~\ref{lm:PsiNecCond}}
\begin{proof}
Statements~(i) and~(ii) follow from the
generalized Kuhn-Tucker theorem (see~theorem~1, Sec. 9.4 in~\cite{Luenberger}).

We will verify that the minimizer $\psi$ is a regular point of the inequality 
$G(\psi)\leq 0$. We leave it to the reader to verify that the functionals $J$ and
$G$ are Gateaux differentiable at $\psi\geq 1$.

It suffices to show that there exists $h\in W^{1,2}_0(0,1)$ such that
$$
\delta G(\psi,h)=\int_0^1\frac{\psi'}{\psi^3}(h'\psi-\psi' h)\,dt<0,
$$
where $\delta G(\psi,h)$ is the Gateaux derivative of $G$ in the direction $h$.

Assume, on the contrary, that 
\begin{equation}
\label{eq:psiRegPt}
\int_0^1\frac{\psi'}{\psi^3}(h'\psi-\psi' h)\,dt=0
\end{equation}
for all $h\in W^{1,2}_0(0,1)$. Then $\psi$ satisfies 
the Euler-Lagrange equation for the functional $G$ whose associated  Lagrangian
$(\psi'/\psi)^2$  has a positive second derivative with respect to $\psi'$.
By a regularity result for weak solutions of Euler-Lagrange equations (see theorem~1.2.3, Sec.~1.2 in~\cite{JostCV}), 
$\psi$ is of class $C^2(0,1)$. Therefore, we can integrate by parts in 
equation~\eqref{eq:psiRegPt} to obtain the differential equation
$
\psi''=(\psi')^2/\psi.
$

The function $t \mapsto R^t$ is the unique solution
of the latter differential equation satisfying the boundary conditions $\psi(0)=1$ 
and $\psi(1)=R$. Therefore, the solution $\psi$ of the
optimization problem \eqref{OptPr2} must be $\psi(t)=R^t$. But, there is
a function $h_\be\in Q^{1,2}$ such that $G(h_\be)\leq 0$ and
$J(h_\be)<J(\psi)$, in contradiction to the minimizing property of $\psi$.
In fact, a family of such functions is given by 
\begin{equation*}
h_\beta(t):=
\left\{
\begin{array}{lll}
1, &\mbox{ if } t\in[0,\be];\\
\frac{R^{2 \be}-1}{\be}(t-\be)+1,  &\mbox{ if } t\in(\be, 2\be]; \\
R^t, &\mbox{ if } t \in (2\be,1]
\end{array}
\right.
\end{equation*}
for $\be>0$ sufficiently small. 

The $C^2$ regularity of the solution $\psi$ of the optimization problem~\eqref{OptPr2} follows 
from~(i) and the special form of the Lagrangian
$$L(q,p)=(q^2-1)^2+(q-1)^2+\la\Big(\frac{p}{q}\Big)^2$$
associated with the functional $J+\la G$ for $\la>0$: it has positive second derivative with 
respect to $p$ on a neighborhood $U$ of the set
$\{(\psi(t),\psi'(t)):t\in[0,1]\}$ (see theorem~1.2.3, Sec.~1.2 in~\cite{JostCV}).
\end{proof}

\section*{Acknowledgments}
This project is supported in part by the NSF Grant DMS 0604331.

\end{document}